\documentclass[regno]{amsart}

\RequirePackage{amsthm,amsmath,amsfonts,amssymb}
\RequirePackage{mathrsfs}
\RequirePackage{graphicx}
\RequirePackage{enumitem}
\RequirePackage{tikz}
\RequirePackage[numbers]{natbib}
\RequirePackage[colorlinks,citecolor=blue,urlcolor=blue,linkcolor=blue]{hyperref}
\RequirePackage{comment}
\usepackage[normalem]{ulem}
\usepackage{cancel}
\usepackage[lmargin=2.8cm,rmargin=2.8cm,bmargin=2cm,tmargin=2cm]{geometry}

\DeclareFontFamily{U}{mathx}{\hyphenchar\font45}
\DeclareFontShape{U}{mathx}{m}{n}{%
      <5> <6> <7> <8> <9> <10>
      <10.95> <12> <14.4> <17.28> <20.74> <24.88>
      mathx10}{}
\DeclareSymbolFont{mathx}{U}{mathx}{m}{n}
\DeclareMathAccent{\widecheck}{0}{mathx}{"71}

\DeclareMathAlphabet{\mathbbold}{U}{bbold}{m}{n}
\newcommand{\ind}{\mathbbold{1}}

\newcommand{\nto}{\mathrel{\ooalign{%
  $\to$\cr
  \hidewidth\scalebox{0.8}{$\diagup$}\hidewidth\cr}}}

\newcommand{\Zu}{Z^{(1)}_t}
\newcommand{\Zus}{Z^{(1)}}
\newcommand{\zl}{z^{(1)}_{\lim}}
\newcommand{\zu}{z^{(1)}_t}
\newcommand{\zuz}{z^{(1)}_0}

\newcommand{\Zd}{ Z^{(2)}_t }
\newcommand{\Zds}{ Z^{(2)}}
\newcommand{\zd}{ z^{(2)}_t }
\newcommand{\zdz}{ z^{(2)}_0 }

\newcommand{\Prob}{\mathbb{P}}
\newcommand{\Esp}{\mathbb{E}}
\newcommand{\E}{\mathbb{E}}

\newcommand{\R}{\mathbb{R}}
\newcommand{\N}{\mathbb{N}}

\renewcommand{\P}{\mathbb{P}}
\newcommand{\eps}{\varepsilon}
\newcommand{\var}{\mathbb{V}}
\newcommand{\EXCLref}[1]{\hyperref[excli]{(\mathrm{EXCL}{#1})}}
\newcommand{\ADVref}[1]{\hyperref[ADVi]{(\mathrm{ADV}{#1})}}
\newcommand{\exclref}[1]{\hyperref[exclidet]{(\mathrm{excl}{#1})}}

\theoremstyle{plain}
\newtheorem{theo}{Theorem}[section]
\newtheorem{coro}[theo]{Corollary}
\newtheorem{lemma}[theo]{Lemma}
\newtheorem{prop}[theo]{Proposition}

\theoremstyle{definition}
\newtheorem{defi}[theo]{Definition}
\newtheorem{rem}[theo]{Remark}
\newtheorem{assu}[theo]{Assumption}

\begin{document}

\title[Lotka--Volterra systems with random catastrophes]{Competitive Lotka--Volterra systems with random catastrophes: invasion fitnesses and reversal of the competitive hierarchy}

\date{}
\author{
	Eugène \textsc{Ferragu}
		%\thanks{First footnote}
		\and
	Aline \textsc{Marguet}
		%\thanks{Second footnote}
		\and
	Charline \textsc{Smadi}
}
\def\runauthor{
	Eugène \textsc{Ferragu}, Aline \textsc{Marguet}, Charline \textsc{Smadi}
}

\date{  \today}
    \address[E. \textsc{Ferragu}, A. \textsc{Marguet}]{Univ.\ Grenoble Alpes, Inria, 38000 Grenoble, France, Univ. Grenoble Alpes, CNRS, LIPhy, Grenoble, France.}
    \email{eugene.ferragu@inria.fr, aline.marguet@inria.fr}
    \address[C. \textsc{Smadi}]{Univ.\ Grenoble Alpes, INRAE, LESSEM, 38000 Grenoble, France.}
    \email{charline.smadi@univ-grenoble-alpes.fr}

%\footnotetext[1]{Univ.\ Grenoble Alpes, Inria, 38000 Grenoble, France, Univ. Grenoble Alpes, CNRS, LIPhy, Grenoble, France\\
%E-mail: eugene.ferragu@inria.fr, aline.marguet@inria.fr}

%\footnotetext[2]{Univ.\ Grenoble Alpes, INRAE, LESSEM, 38000 Grenoble, France\\
%E-mail: charline.smadi@univ-grenoble-alpes.fr}

\begin{abstract}
We study a two-type individual-based population with logistic competition subject
to random catastrophes: at the epochs of a Poisson process, each individual
survives independently with a random, type-dependent probability. In the
large-population limit, the process converges to a competitive Lotka--Volterra
system perturbed by multiplicative jumps. We show that the long-time behaviour of this system -- exclusion of one
type or the other, or coexistence -- is governed by two averaged invasion
fitnesses, obtained from the classical ones by replacing each growth rate $a_i$ with
its average $\bar a_i=a_i+r\E[\ln\Theta^{(i)}]$, where $r$ is the catastrophe rate
and $\Theta^{(i)}$ is the survival probability of an individual of type $i$ when a
catastrophe occurs. A consequence is that catastrophes can reverse the competitive
hierarchy: a type doomed to exclusion in the deterministic system may persist or
even exclude its competitor, when catastrophes are frequent enough, even if each
catastrophe is individually more harmful to it. In the finite population setting, we also study the invasion of a mutant on the time scale $\ln K$, where $K$ is a scaling parameter,  through a coupling with branching processes in the varying random environment generated by the resident population.

\end{abstract}

\keywords{Lotka--Volterra system, random catastrophes, piecewise deterministic Markov process, invasion fitness, branching process in random environment, competitive exclusion, coexistence}

\subjclass[2010]{Primary: 60J80, 92D25; Secondary: 60G51, 60J25, 92D15, 37H15.}

\maketitle

\section{Introduction}

Understanding the fate of competing types, and, in particular, of newly arising mutants, is a central question in ecology and evolution. Individual demographic rates depend both on the biotic environment -- the other individuals through competition, predation, or other ecological interactions \cite{qian2020balance} -- and on abiotic conditions: temperature, resources, seasonality, storms, droughts, fires, or dilution protocols, whose fluctuations need not occur on the time scale of individual births and deaths \cite{Levin1992,VasseurYodzis2004}.

Our main result is that random catastrophes do not destroy the classical invasion-fitness structure of competitive Lotka--Volterra systems: they act on it through a single substitution. For $i\in \{1,2\}$, each intrinsic growth rate $a_i$ has to be replaced by an average growth rate taking into account the catastrophes acting multiplicativly on the population size $\bar a_i=a_i+r\E[\ln\Theta^{(i)}]$, where $r$ is the rate of catastrophes and $\Theta^{(i)}$ the probability that a type-$i$ individual survives one of them. The whole trichotomy -- exclusion of type $1$, exclusion of type $2$, coexistence -- is then read off the signs of the resulting fitnesses. The competition thresholds themselves do not depend on $r$, so that increasing the catastrophe rate simply moves the system along a single axis and may take it through the three regimes in turn. This yields an explicit reversal of the competitive hierarchy: one can find parameters such that every single catastrophe is more severe for type $1$ than for type $2$ and type $1$ is excluded for $r=0$, coexists with type $2$ for $r=1$, and excludes type $2$ for $r=1.8$.

The effect of abiotic variability is not unidirectional: environmental fluctuations may increase extinction risk \cite{LewontinCohen1969, lande1993risks}, or on the contrary maintain diversity by letting rare types recover during favourable periods \cite{hutchinson1961paradox, chesson1982stabilizing, GillespieTurelli1989} -- a question made pressing by climate change and by the increasing frequency of extreme events \cite{walther2002ecological,wernberg2024impacts}.

For our purposes, the relevant lesson from stochastic persistence theory is that the long-term fate of a type is often encoded in its growth rate when rare. One studies the dynamics generated by the resident community and by the fluctuating environment, and then asks whether a small population of the missing type has positive or negative average growth in this regime. This is the role played by realized per-capita growth rates, or invasion rates, in stochastic persistence results \cite{chesson1989invasibility, HeningNguyen2018, BenaimSchreiber2019, HeningNguyenChesson2021}.

These results are mostly formulated either for stochastic difference equations, where the population is updated by random maps from one generation or census time to the next \cite{chesson1989invasibility, BenaimSchreiber2019}, or for stochastic differential equations and randomly switched vector fields \cite{HeningNguyen2018, benaim2016lotka}. In both cases, the size of the population follows a continuous trajectory, which leads to techniques differing substantially from those needed for piecewise deterministic Markov processes with jumps \cite{Davis1984, BenaimLeBorgneMalrieuZitt2015}.

In this work, we study a two-type individual-based population model with logistic competition and random catastrophes. Between catastrophes, individuals reproduce and die according to type-dependent birth and death rates, and deaths include density-dependent competitive interactions. This part of the model belongs to the family of logistic birth-and-death processes introduced by Lambert \cite{lambert2005branching} and widely used in stochastic models of adaptive dynamics and population processes \cite{champagnat2006microscopic, bansaye2010branching, meleard2012quasi}. Catastrophes occur independently of the population state, according to an external Poisson process. At each catastrophe, a random fraction of individuals of each type is killed: conditionally on the catastrophe intensity $(\Theta^{(1)},\Theta^{(2)})$, each individual of type $i$ survives independently with probability $\Theta^{(i)}$.

The carrying capacity is represented by a scaling parameter $K$, and we study the large-$K$ limit of the rescaled process, as in \cite{fournier2004microscopic}. Demographic fluctuations then vanish between catastrophes, but the catastrophes remain visible at the macroscopic scale and produce multiplicative jumps in the densities: the limit is a competitive Lotka--Volterra system with discontinuous trajectories.

This discontinuity distinguishes our model from most Lotka--Volterra systems in time-dependent or randomly changing environments. In periodically varying or Markov-switching systems, and in adaptive dynamics models with piecewise constant environments motivated by seasons or drug treatments, the environment changes the
vector field but the densities themselves do not jump \cite{zhu2009competitive, benaim2016lotka, MalrieuZitt2017, MalrieuPhu2016, esser2025crossing}. Here the environmental event acts directly on the population size: a catastrophe instantaneously removes a macroscopic fraction of each type, which calls for arguments adapted to flows between jumps (multiplicative shocks) and large-population approximations of the underlying individual-based process. There is an important literature on discontinuous population models with catastrophes, going back to \cite{BrockwellGaniResnick1982}. Continuous-state branching processes or continuous-state nonlinear branching processes with multiplicative catastrophic jumps were studied in \cite{bansaye2013extinction,PalauPardoSmadi2016,marguet2021long}, where negative jumps in a L\'evy environment model dramatic events killing a fraction of the population. Branching processes in L\'evy random environments, including models with competition, were further developed in \cite{palau2018branching}.
At the discrete-time level, the fate of a single lineage in such a fluctuating environment is described by the theory of branching processes in varying and random environments, going back to Smith and Wilkinson \cite{smith1969branching} and Athreya and Karlin \cite{athreya1971branching}, and considerably refined since, in particular in the critical regime \cite{afanasyev2005criticality,KerstingVatutin2017}; we rely on the recent unifying approach of Kersting \cite{Kersting19}, which is well suited to the environment generated by our resident population. Catastrophe-like mechanisms also appear naturally in experimental-evolution models with repeated bottlenecks. For instance, \cite{GonzalezCasanovaKurtWakolbingerYuan2016} introduced an individual-based model for the Lenski experiment in which continuous intraday growth alternates with interday sampling, showing how a slower experimental protocol can reshape the effective relative fitness dynamics.

Our contribution lies at the intersection of these viewpoints: an individual-based continuous-time competitive two-type model, macroscopic catastrophic jumps, and a complete Lotka-Volterra classification in terms of averaged invasion fitnesses.

Let us be more precise. In the two-dimensional deterministic competitive Lotka--Volterra system without catastrophes, the relevant quantities are the so-called invasion fitnesses
\begin{equation}
\label{fitinv}    f_{ij} =  a_i-\frac{\beta_{ij}}{\beta_{jj}}a_j,
\end{equation}
where $a_i$ is the intrinsic growth rate of type $i$ and the $\beta$'s are the competition parameters. The sign of $f_{ij}$ gives the growth rate of a rare type $i$ introduced into a resident population of type $j$ at equilibrium. In the presence of catastrophes, the same classification remains valid after replacing $a_i$ by a growth rate that takes into account the average of the logarithm of the effect of the random catastrophes
\begin{equation*}
  \bar a_i    =     a_i+r\E[\ln \Theta^{(i)}],
\end{equation*}
where $r$ is the rate of arrival of catastrophic jumps. The relevant invasion fitnesses become
\begin{equation} \label{def_new_fitnesses}    \mathfrak f_{ij}   =    \bar a_i-\frac{\beta_{ij}}{\beta_{jj}}\bar a_j.
\end{equation}
Thus, all the effect of the catastrophic environment is encoded in the averaged fitnesses $\mathfrak f_{ij}$. The perturbation is random and discontinuous, but the qualitative classification is still read from the signs of two quantities.

Equivalently, the trichotomy only depends on the position of the ratio $\bar a_1/\bar a_2$ with respect to the two competition thresholds $\beta_{12}/\beta_{22}$ and $\beta_{11}/\beta_{21}$, which are not affected by the catastrophes. The environment does not move the thresholds; it moves the position of the ratio.

It is worth stressing that only the logarithmic average is the right one. The infinitesimal, arithmetic growth rate of a rare type is strictly larger than $\mathfrak f_{21}$, the gap being $r(\E[\Theta^{(2)}]-1-\E[\ln\Theta^{(2)}])>0$ by strict concavity of the logarithm; the two notions, which coincide in the switching environments of \cite{benaim2016lotka}, differ as soon as the environment acts multiplicatively on the population size.

The logarithmic averaging appearing in the definition of $\bar a_i$ has a biological meaning. A type that is less affected by a typical catastrophe is not necessarily favoured in the long run: what matters is the cumulative multiplicative effect of catastrophes, measured by $\E[\ln\Theta^{(i)}]$, relative to the intrinsic growth rates and to the competitive coefficients. This is analogous in spirit to the paradoxical effects observed in switching environments \cite{benaim2016lotka}, but here the mechanism comes from discontinuous multiplicative shocks rather than from the alternation between continuous vector fields.

The paper is organised as follows. Section \ref{Model} introduces the two-type birth-and-death process with competition and catastrophes. Section \ref{sec_conv} establishes the convergence of the rescaled process, as $K\to\infty$, towards the Lotka--Volterra system with multiplicative jumps, and identifies the averaged growth rates $\bar a_i$. Section \ref{sec_longtime_Z} classifies the long-time behaviour of the limiting system and computes the limiting time averages of the population sizes. Section \ref{subsec_reversion} studies the reversal of the hierarchy of traits and compares our results with randomly switched Lotka--Volterra systems. Section \ref{newcomer_section} returns to the individual-based process and describes the invasion of a mutant, by coupling with branching processes in the random environment generated by the resident.

\section{Model}
\label{Model}
We consider a population constituted of two types of individuals, $1$ and $2$, living in an environment that faces regular catastrophes, killing each time a (random) macroscopic number of individuals of both types. For $i,j \in \{1,2\}^2$, let $\lambda_i$ (resp. $\mu_i$) denote the natural birth (resp. death) rate of type $i$ individuals, and let $\beta_{ij}$ denote the competitive pressure exerted by an individual of type $j$ on an individual of type $i$. The competition will be rescaled by a parameter $K$ which quantifies the amount of resources available and can be seen from a biological point of view as the carrying capacity of the environment. Random catastrophes occur according to a compound Poisson process with rate $r$. Their intensity $(\theta^{(1)},\theta^{(2)})$ is drawn according to a distribution $\kappa$ on $(0,1)^2$ and has to be understood as follows: each individual of type $i \in \{1,2\}$, independently of other individuals, is killed by the catastrophe with a probability $1-\theta^{(i)}$.

More formally, we write $(\Gamma^{(K)}_t=(\Gamma^{(1,K)}_t,\Gamma^{(2,K)}_t),t\geq0)$ the continuous-time Markov process on $\N^2$ giving the sizes of populations $1$ and $2$ and denote by $(e_1,e_2)$ the canonical basis of $\R^2$. The jump rates of $\Gamma^{(K)}$ are:
\begin{itemize}
    \item Birth event: For $i\in \{1,2\}$ :
    $\Gamma^{(K)}_t\longrightarrow \Gamma^{(K)}_t+e_i$ at rate $\lambda_i\Gamma^{(i,K)}_t$.
    \item Death event: For $i\in \{1,2\}$ :
    $\Gamma^{(K)}_t\longrightarrow \Gamma^{(K)}_t-e_i$ at rate 
    \begin{equation*} \label{def_nui}\Big(\mu_i+\frac{\beta_{i1}}{K}\Gamma^{(1,K)}_t+\frac{\beta_{i2}}{K}\Gamma^{(2,K)}_t\Big)\Gamma^{(i,K)}_t=:\nu_t^i\Gamma^{(i,K)}_t.\end{equation*}
    \item Catastrophes: $\Gamma^{(K)}_{t-}\longrightarrow(\text{Bin}(\Gamma^{(1,K)}_{t-},\Theta^{(1)}),\text{Bin}(\Gamma^{(2,K)}_{t-},\Theta^{(2)}))$ at rate $r$, where the two binomials are independent and $(\Theta^{(1)},\Theta^{(2)})$ has law $\kappa$. All catastrophes are independent.
\end{itemize}
Equivalently, we can 
define the Markov process $\Gamma^{(K)}$ as the strong solution of a stochastic differential equation (SDE). For $i\in\{1,2\}$ 
\begin{equation}
\label{strongSDE}
\begin{aligned}
\Gamma_t^{(i,K)}
=&\Gamma_0^{(i,K)}+\int_0^t
\left(\int_0^{\lambda_i\Gamma^{(i,K)}_{s-}}N_i^+(ds,dx)-\int_0^{\nu_{s-}^i\Gamma^{(i,K)}_{s-}}N_i^-(ds,dx)\right)\\
&+\int_0^t\int_0^1\int_0^1\Big(F^{-1}_{\text{Bin}(\Gamma^{(i,K)}_{s-},\theta^{(i)})}(p) - \Gamma^{(i,K)}_{s-}\Big)\bar{N}_i(ds,d\theta^{(i)},dp),
\end{aligned}
\end{equation}
where $N_1^+,N_1^-,N_2^+,N_2^-$ are independent Poisson point processes on $\R^+\times \R$ with intensity $dsdx$, $\bar{N}_1$, $\bar{N}_2$ are two projections of the same Poisson point process $\bar{N}$ on $\R^+\times[0,1]^2\times [0,1]^2$ of intensity $rds\kappa(d\theta^{(1)},d\theta^{(2)})dp^{(1)}dp^{(2)}$, and $F^{-1}_{\text{Bin}(n,p)}$ is the generalized inverse of the cumulative distribution function of a binomial law of parameters $(n,p)$.

Our work heavily relies on the analysis of a piecewise-deterministic Markov process (PDMP), which is the limit of the rescaled birth and death process with catastrophes 
\begin{equation} \label{defZK} Z^{(K)}:={\Gamma^{(K)}}/{K},\end{equation}
when $K$ is large (see Lemma \ref{lem_Kurtz}) and which satisfies a system of competitive Lotka-Volterra equations, perturbed by the catastrophes. The non-perturbed system, denoted by $\mathbf{z}_t=(z^{(1)}_t,z^{(2)}_t)$, is the solution to:
    \begin{equation} \label{def_LV_det} \left\{
    \begin{array}{ll}
        \frac{dz^{(1)}_t}{dt} = z^{(1)}_t\left(a_1 - \beta_{11}z^{(1)}_t - \beta_{12}z^{(2)}_t\right)=:M_1(z^{(1)}_t,z^{(2)}_t),\\
        \frac{dz^{(2)}_t}{dt} = z^{(2)}_t\left(a_2 - \beta_{21}z^{(1)}_t - \beta_{22}z^{(2)}_t\right)=:M_2(z^{(1)}_t,z^{(2)}_t),
    \end{array}    
        \right.\end{equation}
where $a_i:=\lambda_i-\mu_i$ for $i\in \{1,2\}$.
The key quantities to characterize the long-time behavior of the solution to \eqref{def_LV_det} are the invasion fitnesses $f_{ij}$ introduced in \eqref{fitinv}. We refer the reader to Appendix \ref{secLVdet} for the classical properties of System \eqref{def_LV_det} which will be needed in the proofs.

The intensity of catastrophes results from summing of the order of $K$ Bernoulli random variables encoding the survival (or not) of each individual. Using that for $x \in \R_+^*$, $\theta \in (0,1)$ a variable of law $\text{Bin}(Kx,\theta)/K$ converges in probability to $\theta x$ (see Lemma \ref{lem_Kurtz} for a rigorous result), we obtain that if $Z^{(i,K)}_0\to z_0^{(i)}$ when $K\to\infty$, the limit of the process $Z^{(K)}$ (denoted $Z=(Z^{(1)}, Z^{(2)})$) is the solution to the following system of SDE: for $i\in\{1,2\}$ and $t\geq 0$,
\begin{equation}
\label{eqdep}
Z^{(i)}_t = z_0^{(i)}+ \int_0^tZ^{(i)}_s\left(a_i - \beta_{i1}Z^{(1)}_s - \beta_{i2}Z^{(2)}_s\right)ds + \int_0^t \int_0^1 (\theta^{(i)}-1) Z^{(i)}_{s-}S_i(ds,d\theta^{(i)}),
\end{equation}
where for $i \in \{1,2\}$, $S_i(ds,d\theta^{(i)})=\int_0^1 \bar{N}_i(ds,d\theta^{(i)},dp). $

\section{Results}

\subsection{Convergence of the discrete process and averaged invasion fitnesses} 

\label{sec_conv}
To analyse the long-time behaviour of the rescaled birth-death process with catastrophes $Z^{(K)}$ defined in \eqref{defZK}, we will rely on the study of the strong solution $Z$ to \eqref{eqdep}, as the two processes are close to each other for large $K$. This is the content of  Lemma \ref{lem_Kurtz} and Proposition \ref{prop_cvglongue} which will be proven in Section \ref{sec_proof_conv}. The first result is an adaptation of Theorem 2.1 p.456 in \cite{ethier2009markov}, which holds only on a time interval whose size stays bounded as $K$ goes to infinity.

\begin{lemma}\label{lem_Kurtz}
    For $(i,j) \in \{1,2\}^2$, let us fix $\lambda_i,\ \mu_i, \ \beta_{ij} \in \R_+, \ r,\ \beta_{ii} \in \R_+^*$ and a law $\kappa$ on $(0,1)^2$. Take $T<\infty$. If  $Z^{(K)}_0$ converges to $Z_0=(z_0^{(1)},z_0^{(2)})$ in probability, then
    \begin{equation} \label{lim_Kurtz}
        \lim_{K \to \infty} \sup_{s \leq T} \big\| Z^{(K)}_s - Z_s \big\| = 0 \quad \text{in probability,}
    \end{equation}
    where $\|.\|$ denotes the $L^1$-norm on $\R^2$.
\end{lemma}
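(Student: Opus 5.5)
Since the catastrophes form a Poisson process of finite rate $r$ that is not rescaled with $K$, I would condition on this process, shared by $Z^{(K)}$ and $Z$ through $\bar N$, and compare the two processes epoch by epoch. On $[0,T]$, let $0<\tau_1<\dots<\tau_n\le T$ be the catastrophe times with marks $(\Theta^{(1)}_k,\Theta^{(2)}_k)$. Then $n$ is almost surely finite, and $\P(n>N)$ is as small as we like for $N$ large. Fix such an $N$ and work on $\{n\le N\}$. Between two consecutive catastrophes, $Z^{(K)}$ is a density-dependent birth-and-death process and $Z$ follows the flow of \eqref{def_LV_det}. The plan is an induction over the intervals $[\tau_k,\tau_{k+1})$ (with $\tau_0=0$ and $\tau_{n+1}=T$). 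The inductive hypothesis is that $Z^{(K)}_{\tau_k}\to Z_{\tau_k}$ in probability. We then show (a) uniform convergence on $[\tau_k,\tau_{k+1})$ and (b) convergence of the post-jump values at $\tau_{k+1}$.

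\textbf{Step (a).} Conditionally on the catastrophe process, the strong Markov property applied at $\tau_k$ reduces this to Theorem 2.1 p.456 of \cite{ethier2009markov} (Kurtz's theorem) on a deterministic horizon, started from a random initial condition that converges in probability. The drift $M_i$ is only locally Lipschitz, so I would localise. Set $R:=1+\sup_{s\le T}\|Z_s\|$, which is finite since $Z$ is dominated by the logistic flow started from $\|z_0\|$ and catastrophes only decrease coordinates. Let $\sigma_K$ be the exit time of $Z^{(K)}$ from the ball of radius $2R$. Before $\sigma_K$ we write
\[
Z^{(i,K)}_t=Z^{(i,K)}_{\tau_k}+\int_{\tau_k}^t M_i(Z^{(K)}_s)\,ds+\mathcal M^{(i,K)}_t ,
\]
where the martingale $\mathcal M^{(i,K)}$ has quadratic variation $O(1/K)$ up to $\sigma_K$. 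Doob's inequality makes $\sup|\mathcal M^{(i,K)}|$ small, and Gronwall with the Lipschitz constant of $M$ on the ball of radius $2R$ bounds $\sup_{[\tau_k,\tau_{k+1})}\|Z^{(K)}-Z\|$. This bound in turn shows that $\sigma_K>\tau_{k+1}$ with probability tending to one. Working with the Gronwall estimate directly handles the random initial condition and avoids any uniformity issue in the cited theorem.

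\textbf{Step (b).} At $\tau=\tau_{k+1}$ we have $Z^{(i,K)}_{\tau}=\mathrm{Bin}(KZ^{(i,K)}_{\tau-},\Theta^{(i)})/K$. Its conditional mean is $\Theta^{(i)}Z^{(i,K)}_{\tau-}$ and its conditional variance is at most $Z^{(i,K)}_{\tau-}/(4K)$. Chebyshev then gives $Z^{(i,K)}_{\tau}-\Theta^{(i)}Z^{(i,K)}_{\tau-}\to0$ in probability. Since $Z^{(i)}_\tau=\Theta^{(i)}Z^{(i)}_{\tau-}$ for the same mark $\Theta^{(i)}$ (this is where the common $\bar N$ is used), the triangle inequality together with step (a) gives $Z^{(K)}_\tau\to Z_\tau$ in probability. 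To get the same jump times in both processes, note that in \eqref{strongSDE} the catastrophes of $Z^{(K)}$ are driven by the same $\bar N$. Without this coupling the statement would only hold in Skorokhod distance, not in sup norm.

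Chaining at most $N+1$ steps gives \eqref{lim_Kurtz} on $\{n\le N\}$, and letting $N\to\infty$ finishes the proof. I expect the main obstacle to be the localisation and the use of non-Lipschitz rates. I would first check that the ball of radius $2R$ is not left before $T$, via the Gronwall bound, and handle this carefully by stopping at $\sigma_K$.
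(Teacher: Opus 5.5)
Your proposal is correct and follows essentially the paper's route: condition on the catastrophe $\sigma$-field $\mathcal{G}$, compare $Z^{(K)}$ and $Z$ epoch by epoch using Doob's maximal inequality and Gronwall between catastrophes, control the binomial thinning at each catastrophe by Chebyshev, and chain over the finitely many catastrophes before $T$. The only real difference is the localisation: the paper gets the a priori bound $\sup_{t\le T}Z^{(i,K)}_t\le 2ka_i/\beta_{ii}$ (events $\mathfrak{A}^{(K)}_k$) from large-deviation estimates for the dominating birth-and-death process, whereas your exit time $\sigma_K$, bootstrapped through the Gronwall bound itself, is self-contained and needs no such external input.
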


In order to control the invasion of a mutant, as its dynamics will depend on the dynamics of the resident population, we will need to control the deviation of this discrete resident population $Z^{(K)}$ compared to its approximation $Z$ during a time of order $ \ln K$, which is the time it takes for a mutant population to reach a size of order $K$ (see \cite{champagnat2006microscopic}). This is the aim of the next proposition. The following assumption on the effect of the catastrophes on the resident population will be needed:
\begin{equation} \label{H3} \tag{H}
\E\big[(\Theta^{(1)})^{-p_0}\big]<\infty \ \text{ for some } p_0\in(0,1].
\end{equation}
In particular, we need to ensure that catastrophes will not drive the resident population too close to $0$, where the stochasticity is too high.
\begin{prop}
\label{prop_cvglongue}
Let $\alpha>1$ and $\varepsilon>0$. Assume that there exists $0<p_0 \leq 1$ such that
\[  Z_0^{(1)}>0, \quad 
\Gamma^{(1,K)}(0) = \lfloor Z_0^{(1)} K \rfloor,
\quad
\Gamma^{(2,K)}(0)=Z_0^{(2)}=0\quad\text{and} \quad \eqref{H3} \ \text{holds} .
\]
Then, 
$$ \lim_{K \to \infty} \P\Big( \sup_{t \leq (\ln K)^{\alpha-\eps}}|Z_t^{(1)}-Z^{(1,K)}_t|^2 > K^{3\eps-1} \Big)=0. $$
\end{prop}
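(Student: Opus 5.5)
We couple $Z^{(1,K)}$ and $Z^{(1)}$ through the same Poisson point process $\bar N_1$ of catastrophes, with the same $\Theta^{(1)}$ at each catastrophe epoch. We control the difference $D_t:=Z^{(1,K)}_t-Z^{(1)}_t$ by a Gronwall-type argument on the random time interval $[0,T_K]$, where $T_K=(\ln K)^{\alpha-\eps}$. Since type $2$ is absent, only the one-dimensional logistic equation with multiplicative jumps is involved. The catastrophe times $\tau_1<\tau_2<\dots$ are common to both processes. On $[0,T_K]$ there are at most $N_K\le 2rT_K$ of them with probability tending to one, by Poisson concentration.

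\textbf{Step 1: a priori bounds.} Both processes stay bounded above by a deterministic constant $C$ on $[0,T_K]$ with high probability. For $Z^{(1)}$ this follows from the logistic drift, and for $Z^{(1,K)}$ from a standard comparison with a logistic birth–death process (exit probabilities are $e^{-cK}$ per unit time). The lower bound is more delicate. Between catastrophes $Z^{(1)}$ follows the logistic flow, so $\ln Z^{(1)}$ is bounded below by a random walk with increments $\ln\Theta^{(1)}$ plus a drift that is positive near $0$. Using \eqref{H3} and a Chernoff/Markov bound on $\E[(\Theta^{(1)})^{-p_0}]^{N_K}$, we aim to show that $\inf_{t\le T_K} Z^{(1)}_t\ge K^{-\eps/2}$ with probability tending to one. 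This is where the moment condition is used: a single catastrophe drives the population below $K^{-\eps/4}$ with probability at most $K^{-p_0\eps/4}\E[(\Theta^{(1)})^{-p_0}]$, and a union bound over the polylogarithmically many catastrophes still gives $o(1)$. A recovery argument then handles accumulation of consecutive catastrophes, since $\ln Z$ has a positive drift when small.

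\textbf{Step 2: error between catastrophes.} On each interval $[\tau_k,\tau_{k+1})$ we write $Z^{(1,K)}$ as the solution of the logistic ODE plus a martingale $M^K$ whose quadratic variation is bounded by $\frac1K\int (\lambda_1+\nu_s)Z^{(1,K)}_s\,ds\le C\,\Delta\tau/K$. By Doob's inequality, $\sup|M^K|^2\le K^{\eps-1}$ on every interval simultaneously, with high probability. The drift is Lipschitz with constant $L$ on $[0,C]$, so Gronwall gives
\[
|D_{\tau_{k+1}-}|\le e^{L\Delta\tau_k}\big(|D_{\tau_k}|+K^{(\eps-1)/2}\big).
\]
The accumulated factor $e^{L T_K}$ is superpolynomial in $K$ when $\alpha>1$, so this naive bound is too weak. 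Instead we use the contractivity of the logistic flow: the linearization $a_1-2\beta_{11}z$ is negative near the equilibrium $a_1/\beta_{11}$. The linearized variation along the true trajectory is $\exp\int_s^t(a_1-2\beta_{11}Z^{(1)}_u)du\cdot\prod\Theta$, and we need to bound it by a constant uniformly in $s\le t\le T_K$. Writing $\ln Z^{(1)}_t-\ln Z^{(1)}_s=\int_s^t(a_1-\beta_{11}Z^{(1)})+\sum\ln\Theta$, this factor equals $\frac{Z^{(1)}_t}{Z^{(1)}_s}e^{-\beta_{11}\int_s^t Z^{(1)}}\le C/Z^{(1)}_s$. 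By Step 1 it is therefore bounded by $C K^{\eps/2}$.

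\textbf{Step 3: catastrophe jumps and conclusion.} At $\tau_k$ we have $D_{\tau_k}=\Theta^{(1)}D_{\tau_k-}+\xi_k$, where $\xi_k=\frac1K\text{Bin}(KZ^{(1,K)}_{\tau_k-},\Theta)-\Theta Z^{(1,K)}_{\tau_k-}$. By Hoeffding's inequality $|\xi_k|\le K^{(\eps-1)/2}$ for all $k\le N_K$ with high probability. We then combine everything through a variation-of-constants (Duhamel) representation: $D_t$ is a sum over the martingale increments and the binomial errors, each propagated by the linearized factor bounded in Step 2. The quadratic remainder is handled by a bootstrap stopping time on the event $|D|\le K^{-1/2+\eps}$. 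This gives $\sup_{t\le T_K}|D_t|\le C K^{\eps/2}\,(N_K+1)\,K^{(\eps-1)/2}\le K^{3\eps/2-1/2}$ for large $K$, which is the claim after squaring.

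The main obstacle is Step 1 combined with the propagation factor. We must show that catastrophes, through \eqref{H3}, keep $Z^{(1)}$ above a small power of $K^{-1}$ over a time horizon of order $(\ln K)^{\alpha-\eps}$. We also need the relative (multiplicative) linearization so that errors are not amplified exponentially.
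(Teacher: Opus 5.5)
Your architecture is viable and close to the paper's. The identity $\exp\big(\int_s^t(a_1-2\beta_{11}Z^{(1)}_u)du\big)\prod\Theta=\frac{Z^{(1)}_t}{Z^{(1)}_s}e^{-\beta_{11}\int_s^tZ^{(1)}}$ is the pathwise counterpart of the telescoping factor $(Z_{T_i}/Z_{T_l})^2$ that the paper gets for $\E[\Delta^{(K)}_{T_i}\mid\mathcal G]$. The paper differs in three ways:
\begin{itemize}
\item it works with second moments conditional on the catastrophes and uses $a_1-\beta_{11}(Z+Z^{(K)})\le a_1-\beta_{11}Z$, so there is no quadratic remainder;
\item it only needs $\sum_kZ_{T_k}^{-1}$ to be polylogarithmic;
\item it treats the stretches between catastrophes by a crude Gr\"onwall bound over intervals of length $O(\ln\ln K)$.
\end{itemize}
Your pathwise Duhamel route uses $\sup_s 1/Z^{(1)}_s$ instead and loses a factor $K^{\eps/2}$, which the target $K^{3\eps-1}$ absorbs.

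The genuine gap is Step 1, which carries the whole difficulty, and what you describe does not establish it.
\begin{itemize}
\item The Chernoff bound fails. Since $\Theta^{(1)}<1$, $\E[(\Theta^{(1)})^{-p_0}]>1$, so $\E[(\Theta^{(1)})^{-p_0}]^{N_K}=\exp\big(c(\ln K)^{\alpha-\eps}\big)$ is superpolynomial as soon as $\alpha-\eps>1$. That is exactly the regime where the statement goes beyond crude Gr\"onwall, and $K^{-p_0\eps/2}$ cannot compensate it.
\item The single-catastrophe union bound misses the real danger, an accumulation of many moderate catastrophes, and the ``recovery argument'' is not given.
\item Recovery is driven not by the between-catastrophe drift $a_1-\beta_{11}Z^{(1)}$ but by the averaged rate $\bar a_1=a_1+r\E[\ln\Theta^{(1)}]>0$ of Assumption~\ref{ass:a_bar_positive}. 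With only $a_1>0$ and \eqref{H3}, $\bar a_1<0$ would make $\inf_{t\le T_K}Z^{(1)}_t$ of order $\exp(-c(\ln K)^{\alpha-\eps})$, and Step 1 would be false.
\end{itemize}

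The missing ingredient is a uniform negative-moment bound at catastrophe times, which is the core of the paper's proof.
\begin{itemize}
\item Set $Y_k:=1/Z^{(1)}_{T_k}$. Formula \eqref{Z_explicite} gives $Y_k=A_kY_{k-1}+B_k$ with i.i.d.\ pairs $A_k=e^{-a_1(T_k-T_{k-1})}/\Theta_k$ and $0\le B_k\le\beta_{11}/(a_1\Theta_k)$.
\item By \eqref{H3}, $\Psi(\lambda)=a_1\lambda+r\E[1-(\Theta^{(1)})^{-\lambda}]$ is finite on $[0,p_0]$, with $\Psi(0)=0$ and $\Psi'(0^+)=\bar a_1>0$. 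So there is $p\in(0,p_0]$ with $\Psi(p)>0$, i.e.\ $\E[A_1^p]=\frac{r}{r+a_1p}\E[(\Theta^{(1)})^{-p}]<1$.
\item Subadditivity of $u\mapsto u^p$ then gives $\sup_k\E[Y_k^p]\le M<\infty$, hence $\P(\max_{k\le 2rT_K}Y_k>K^{\eps/2})\le 2rT_KMK^{-p\eps/2}\to0$.
\item Between catastrophes $Z^{(1)}$ moves monotonically towards $a_1/\beta_{11}$, so $\inf_{t\le T_K}Z^{(1)}_t\ge\min_kZ^{(1)}_{T_k}\wedge Z^{(1)}_0\wedge(a_1/\beta_{11})$. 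This is your Step 1.
\end{itemize}

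Two smaller repairs are needed.
\begin{itemize}
\item Your bootstrap does not close at the threshold $K^{\eps-1/2}$: the bound you derive, $C(N_K+1)K^{\eps-1/2}$, exceeds it by a polylogarithmic factor. Either use the threshold $K^{3\eps/2-1/2}$ after reducing to small $\eps$ (the statement for a smaller $\eps$ implies it for a larger one), or avoid the remainder by propagating with $a_1-\beta_{11}(Z^{(1)}+Z^{(1,K)})$, as the paper does.
\item On each interval the martingale part must be integrated by parts against $\Phi(s,t)$. This turns $N_K+1$ into $N_K+1+LT_K$, which is harmless.
\end{itemize}
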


For $i\in \{1,2\}$, let $K^{(i)}$ be the auxiliary L\'evy process defined by
\begin{equation}
\label{defK}    
K_t=(K^{(1)}_t,K^{(2)}_t):=(a_1t,a_2t) + \int_0^t \int_{(0,1)^2} (\ln\theta^{(1)},\ln\theta^{(2)})S(ds, (d\theta^{(1)},d\theta^{(2)})),
\end{equation}
where $S(ds,(d\theta^{(1)},d\theta^{(2)}))=\int_0^1 \int_0^1 \bar{N}(ds,d\theta^{(1)},d\theta^{(2)},dp^{(1)},dp^{(2)})$. We introduce the parameters:
\begin{align} \label{def_overline_ai}
    \overline{a}_i&:=a_i + r\Esp[\ln \Theta^{(i)}], \quad \text{and}\quad q_i:=-\Esp[\ln \Theta^{(i)}]>0,\quad  i \in \{1,2\}.
\end{align}

Note that  $\E[K_1^{(i)}]= \overline{a}_i.$ 
In particular, if $\overline{a}_i<0$, as $K^{(i)}$ is a Lévy process, $K_t^{(i)}$ drifts to $-\infty$ when $t$ goes to infinity (see Theorem 7.2 in \cite{kyprianou2006introductory}). Moreover, an application of Itô's formula to \eqref{eqdep} and \eqref{defK} shows that
\begin{equation} \label{Ito}  d \big(Z_t^{(i)}e^{-K_t^{(i)}}\big) = -Z_t^{(i)}e^{-K_t^{(i)}}\big(  \beta_{i1}Z^{(1)}_t + \beta_{i2}Z^{(2)}_t \big)dt , \end{equation}
which implies that $\big(Z_t^{(i)}e^{-K_t^{(i)}}, t \geq 0\big)$ is non-increasing. Hence, $\overline{a}_i < 0$ implies that  $e^{K_t^{(i)}}$ goes to $0$ and thus that $Z_t^{(i)}$ converges to $0$ as $t$ goes to infinity, which is not of interest in this work.
Therefore, in the sequel, we consider that the following assumption holds.
\begin{assu}\label{ass:a_bar_positive}
$\overline{a}_1>0, \quad \overline{a}_2>0.$
\end{assu}

\begin{rem} \label{rem_moments}
Assumption \ref{ass:a_bar_positive} implies $\E\big[|\ln\Theta^{(i)}|\big]<\infty$ for $i\in\{1,2\}$ so that $q_i<\infty$. 
\end{rem}

Recall the definition of the averaged invasion fitnesses in \eqref{def_new_fitnesses}.
In the case without jumps ($r=0$, $\overline{a}_i=a_i$), $\mathfrak{f}_{ij}=f_{ij}$ where $f_{ij}$, defined in \eqref{fitinv}, corresponds to the invasion fitness of a mutant $i$ in a population $j$ at equilibrium size. As detailed in Appendix \ref{secLVdet}, this is the Malthusian growth rate of a small population of type $i$ mixed with a population of type $j$ at equilibrium (of size $a_j/\beta_{jj}$). In the case with jumps ($r \neq 0$, $\overline{a}_i<a_i$), $\mathfrak{f}_{ij}$ is the Malthusian growth rate of a small population of type $i$ mixed with a population whose size is the time average of the size of a monomorphic population of type $j$ (see Proposition 4 in \cite{palau2018branching}). We will see that the values of $(\mathfrak{f}_{ij}, i \neq j \in \{1,2\})$  govern the asymptotics of the system with jumps in a similar manner to the case without jump.

\subsection{Long term behaviour of the limiting system} \label{sec_longtime_Z}
As the study of the long-time behaviour of the system with two types requires results on the long-time behaviour of the system with one type, we first recall what happens in the monomorphic case.
\label{SDElongtime}
\subsubsection{One population}

This case has been studied in \cite{palau2018branching}. We give here some of their results for the sake of completeness.
Recall the definition of $(K_t,t\geq 0)$ in \eqref{defK}.

\begin{prop}[Proposition 4 in \cite{palau2018branching}]
\label{prop_monomorph}
Let $(Z_t, t \geq 0)$ be the unique strong solution of \eqref{eqdep} with initial condition $Z_0=(z_0,0)$. Then, it satisfies $Z^{(2)} \equiv 0$ and for $t\geq 0$
\begin{equation} \label{Z_explicite} 
Z^{(1)}_t = \frac{z_0 e^{K^{(1)}_t}}
{1 + \beta_{11} z_0 \int_0^t e^{K^{(1)}_s} ds}.\end{equation}
Moreover, if $z_0 > 0$ then, $Z^{(1)}_t > 0$ for all
$t \geq 0$ a.s. and if $\overline{a}_1>0$, the process $K^{(1)}$ drifts to $+\infty$ and $(Z^{(1)}_t ,t\geq 0)$ has a unique stationary distribution concentrated on $(0,\infty)$.
\end{prop}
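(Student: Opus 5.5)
First I would dispose of the second coordinate. When $z_0^{(2)}=0$, the equation for $Z^{(2)}$ in \eqref{eqdep} is linear in $Z^{(2)}$ with a bounded-on-compacts coefficient. Moreover, each catastrophe multiplies $Z^{(2)}_{s-}$ by $\theta^{(2)}$, so $Z^{(2)}_{s-}=0$ forces $Z^{(2)}_s=0$. By pathwise uniqueness of strong solutions, $Z^{(2)}\equiv 0$. The first coordinate then solves the scalar equation
\[
Z^{(1)}_t=z_0+\int_0^t Z^{(1)}_s(a_1-\beta_{11}Z^{(1)}_s)ds+\int_0^t\int_0^1(\theta^{(1)}-1)Z^{(1)}_{s-}S_1(ds,d\theta^{(1)}).
\]

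To obtain \eqref{Z_explicite}, I would use \eqref{Ito}, which gives
\[
d\big(Z^{(1)}_te^{-K^{(1)}_t}\big)=-\beta_{11}\big(Z^{(1)}_t\big)^2e^{-K^{(1)}_t}dt.
\]
Set $Y_t=Z^{(1)}_te^{-K^{(1)}_t}$. This $Y$ is continuous and of finite variation, because the jumps of $Z^{(1)}$ and of $e^{K^{(1)}}$ cancel: both are multiplied by $\theta^{(1)}$ at a catastrophe. It satisfies the Bernoulli ODE
\[
\dot Y_t=-\beta_{11}e^{K^{(1)}_t}Y_t^2 .
\]
Hence $1/Y$ has derivative $\beta_{11}e^{K^{(1)}_t}$ as long as $Y>0$, so
\[
1/Y_t=1/z_0+\beta_{11}\int_0^te^{K^{(1)}_s}ds.
\]
This gives the formula. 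Since the right-hand side is finite, $Y_t>0$ for all $t$. Alternatively, one can check directly that the explicit expression solves the SDE and invoke uniqueness. Positivity follows because the numerator $z_0e^{K^{(1)}_t}>0$ and the denominator is finite.

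For the long-time behaviour, $K^{(1)}$ is a Lévy process with $\E[K^{(1)}_1]=\overline a_1>0$, which is finite by Remark \ref{rem_moments}. So $K^{(1)}$ drifts to $+\infty$ by the strong law / Theorem 7.2 in \cite{kyprianou2006introductory}.

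To get a stationary law, I would rewrite the solution as
\[
1/Z^{(1)}_t=e^{-K^{(1)}_t}/z_0+\beta_{11}\int_0^te^{-(K^{(1)}_t-K^{(1)}_s)}ds .
\]
Using time reversal of the Lévy process, $(K^{(1)}_t-K^{(1)}_{(t-u)-})_{u\le t}$ has the same law as $(K^{(1)}_u)_{u\le t}$. Therefore the integral has the law of $\int_0^te^{-K^{(1)}_u}du$, which increases a.s. to the exponential functional
\[
I_\infty=\int_0^\infty e^{-K^{(1)}_u}du .
\]
This limit is finite a.s. because $K^{(1)}_u/u\to\overline a_1>0$. Meanwhile $e^{-K^{(1)}_t}\to0$. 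Consequently, for any $z_0>0$, $Z^{(1)}_t$ converges in law to $1/(\beta_{11}I_\infty)$, which lies in $(0,\infty)$. This law is invariant, since $Z^{(1)}$ is a Feller Markov process (the flow is continuous in $z_0$). It is also unique: any stationary law $\pi$ on $(0,\infty)$ satisfies $\pi=\lim_t\pi P_t=\mathcal L(1/(\beta_{11}I_\infty))$ by dominated convergence.

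The main obstacle is making the time-reversal identity and the invariance-plus-uniqueness argument rigorous. In particular, invariance requires checking the Feller/continuity property so that limits in law commute with the semigroup. Uniqueness requires that the convergence in law hold uniformly enough in $z_0$ on compacts of $(0,\infty)$, which follows from the explicit monotone dependence on $z_0$.
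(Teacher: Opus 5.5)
Your proof is correct. You obtain the explicit formula exactly as the paper does, via \eqref{Ito}, but you handle the stationary law by a different route.

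The paper does not reprove the statement; it quotes it from \cite{palau2018branching}. The only argument it gives for the stationary part is the synchronous coupling of Remark~\ref{rem_lossofmemory}. Two solutions driven by the same catastrophes satisfy $1/Z^{(1)}_t-1/Z'^{(1)}_t=e^{-K^{(1)}_t}(1/z_0-1/z_0')$. Hence they merge almost surely at the exponential rate of $K^{(1)}$, with a uniform-in-time bound through $e^{-\underline{\mathscr K}^{(1)}_\infty}$. Coupling with a stationary version, whose existence is taken from the reference, then yields uniqueness and convergence.

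Your time-reversal argument gives, in one stroke, existence, convergence in law from every $z_0>0$, and an explicit identification of the stationary law as that of $(\beta_{11}\int_0^\infty e^{-K^{(1)}_u}du)^{-1}$. What it does not give is the pathwise, almost sure loss of memory with explicit rate that the coupling provides. The two approaches are closely related: the coupling identity is just the difference of your decomposition of $1/Z^{(1)}_t$ taken at two initial conditions.

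Two small points:
\begin{itemize}
\item The uniformity in $z_0$ you worry about at the end is unnecessary. Pointwise convergence of $P_tf(z)$ for each $z>0$, plus dominated convergence under $\pi$ (as you wrote), already gives $\pi=\mathcal L\big((\beta_{11}I_\infty)^{-1}\big)$.
\item The equation for $Z^{(2)}$ is not linear, since it contains $-\beta_{22}(Z^{(2)})^2$. This does not matter: all you need is that $(\zeta,0)$, with $\zeta$ the scalar solution, solves \eqref{eqdep}, together with uniqueness.
\end{itemize}
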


\begin{rem} \label{rem_lossofmemory}
In fact the monomorphic process forgets its initial condition at an explicit rate. Let $z_0,z_0'>0$ and let $Z^{(1)}$ and $Z'^{(1)}$ be the solutions to \eqref{eqdep} started at $(z_0,0)$ and $(z_0',0)$ and driven by the same catastrophes. From \eqref{Z_explicite}, we have
\begin{equation} \label{loss_of_memory}
\frac{1}{Z^{(1)}_t}-\frac{1}{Z'^{(1)}_t}
=e^{-K^{(1)}_t}\Big(\frac{1}{z_0}-\frac{1}{z_0'}\Big),\qquad t\geq0 .
\end{equation}
Both processes being bounded by $c:=(a_1/\beta_{11})\vee z_0\vee z_0'$, \eqref{loss_of_memory} gives
$$\sup_{t\geq0}\big|Z^{(1)}_t-Z'^{(1)}_t\big|
\leq c^{2}\,e^{-\underline{\mathscr{K}}^{(1)}_\infty}
\Big|\frac{1}{z_0}-\frac{1}{z_0'}\Big|
\qquad\text{and}\qquad
\big|Z^{(1)}_t-Z'^{(1)}_t\big|\underset{t\to\infty}{\longrightarrow}0
\quad\text{a.s.},$$
where $\underline{\mathscr{K}}^{(1)}_\infty:=\inf_{t\geq0}K^{(1)}_t$ is almost surely finite by Assumption \ref{ass:a_bar_positive} and Lemma \ref{momentinf}. Coupling two stationary versions in this way shows uniqueness and convergence to the stationary distribution of Proposition \ref{prop_monomorph}. We denote it by $\pi$, and the law of $Z^{(1)}_t$ converges to $\pi$ from any initial condition $z_0>0$.
\end{rem}

\subsubsection{Two populations}

In the case of two types of individuals, as explained in the previous section, the invasion fitnesses defined in \eqref{def_new_fitnesses} will govern the long time behaviour of the interacting populations. Three possible regimes may be distinguished:

\begin{align}
    &\mathfrak{f}_{12}<0
    \text{ and } \mathfrak{f}_{21}<0,
    \tag{EXCL}\label{excl}\\
    &\mathfrak{f}_{ij}>0
    \text{ and }\mathfrak{f}_{ji}<0,
    \text{ for } i\neq j
    \tag{EXCLi}\label{excli}\\
    &\mathfrak{f}_{12}>0
    \text{ and } \mathfrak{f}_{21}>0.
    \tag{COEX}\label{coex}
\end{align}

\begin{prop} \label{3cas}
Let $(Z_t^{(1)},Z_t^{(2)}, t \geq 0)$ be the unique strong solution of \eqref{eqdep} with $(z_0^{(1)},z_0^{(2)}) \in (\R_+^*)^2$. Then:
\begin{enumerate}
        \item If (\ref{excl}) and $a_1/a_2\neq \overline{a}_1/\overline{a}_2$, then $\left\{\begin{array}{ll}
          \Prob\big(\big\{Z^{(1)}_t \to 0\big\} \bigcup \big\{  Z^{(2)}_t \to 0\big\}, \ \text{as $t \to \infty$}\big) &=1,\\
          \Prob\big(\big\{Z^{(1)}_t \to 0\big\} \bigcap \big\{  Z^{(2)}_t \to 0\big\}, \ \text{as $t \to \infty$}\big) &=0.
        \end{array}\right.$
        \item If (\ref{excli}), then $ \Prob\big(\big\{Z^{(i)}_t \nto 0\big\} \bigcap \big\{  Z^{(j)}_t \to 0\big\}, \ \text{as $t \to \infty$}\big) =1.$
        \item If (\ref{coex}), then
        $ \Prob\big(\big\{Z^{(1)}_t \to 0\big\} \bigcup \big\{  Z^{(2)}_t \to 0\big\}, \ \text{as $t \to \infty$}\big) =0.$
    \end{enumerate}
\end{prop}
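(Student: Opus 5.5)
The basic tool is the identity \eqref{Ito}. Write $Y^{(i)}_t:=\ln Z^{(i)}_t$. Then
\[
Y^{(i)}_t = \ln z^{(i)}_0 + K^{(i)}_t - \int_0^t \big(\beta_{i1}Z^{(1)}_s+\beta_{i2}Z^{(2)}_s\big)\,ds ,
\]
so the problem reduces to comparing the Lévy part $K^{(i)}_t\approx \overline a_i t$ (strong law of large numbers for Lévy processes, using Remark \ref{rem_moments}) with the time integrals $\int_0^t Z^{(j)}_s\,ds$. For $i\neq j$, form the combination
\[
\beta_{jj}Y^{(i)}_t-\beta_{ij}Y^{(j)}_t
= C+\beta_{jj}K^{(i)}_t-\beta_{ij}K^{(j)}_t-\big(\beta_{jj}\beta_{ii}-\beta_{ij}\beta_{ji}\big)\int_0^t Z^{(i)}_s\,ds .
\]
In this combination the integral of $Z^{(j)}$ cancels, and the drift of the Lévy part is $\beta_{jj}\mathfrak f_{ij}t$. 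This is the stochastic analogue of the classical Lyapunov function for \eqref{def_LV_det}, and it drives all three cases.

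\textbf{Preliminary steps.} First, both components stay bounded in Cesàro mean; in fact $Z^{(i)}$ is dominated pathwise by the monomorphic process with the same catastrophes, via comparison in \eqref{Ito}. Second, a key lemma: on the event $\{Z^{(j)}_t\to0\}$, the Cesàro mean $\frac1t\int_0^t Z^{(i)}_s\,ds$ converges to $\overline a_i/\beta_{ii}$, the mean of the stationary law $\pi$ of Proposition \ref{prop_monomorph}. To prove it, apply the displayed formula for $Y^{(i)}$ divided by $t$ and use $\frac1t\ln Z^{(i)}_t\to 0$. The upper bound comes from boundedness. For the lower bound, compare with the monomorphic process and invoke Remark \ref{rem_lossofmemory}: once $Z^{(j)}$ is small, $Z^{(i)}$ is sandwiched between monomorphic processes with slightly perturbed drift $a_i-\beta_{ij}\delta$, whose $\frac1t\ln$ tends to $0$.

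\textbf{Case (EXCLi).} Say $\mathfrak f_{ij}>0>\mathfrak f_{ji}$. Under this regime the matrix condition gives $\overline a_i/\overline a_j$ beyond both thresholds, and I expect to show $\liminf \frac1t\ln Z^{(j)}_t<0$ directly. The argument would proceed in four steps.
\begin{enumerate}
\item Show that $Z^{(i)}$ cannot vanish. If $Z^{(i)}\to0$, the key lemma gives $\frac1t\int Z^{(j)}\to \overline a_j/\beta_{jj}$. Then $\frac1t Y^{(i)}_t\to \overline a_i-\beta_{ij}\overline a_j/\beta_{jj}=\mathfrak f_{ij}>0$, which contradicts $Z^{(i)}\to 0$. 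Making this rigorous needs a recurrence argument: the process returns infinitely often to a compact set away from the $i$-axis.
\item Show that $Z^{(j)}\to0$. Use the Lyapunov combination above, together with the ergodic averages of $Z^{(i)}$ over the excursions where $Z^{(j)}$ is small.
\item Treat the $\limsup$ as a fixed-point issue. The possible limit points of the Cesàro means $\big(\frac1t\int Z^{(1)},\frac1t\int Z^{(2)}\big)$ must satisfy linear constraints: $\overline a_k-\sum_l\beta_{kl}\bar z_l=0$ for any type $k$ whose log grows sublinearly, and $\le0$ otherwise. The argument is a standard occupation-measure / invariant-measure argument in the style of Hening--Nguyen and Benaïm--Schreiber, adapted to the PDMP. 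Any weak limit of empirical occupation measures is invariant, hence is a mixture of $\delta_0$, the invariant law on the $1$-axis, the invariant law on the $2$-axis, and interior invariant laws. The invasion rate of $i$ against the ergodic measure supported on the $j$-axis is exactly $\mathfrak f_{ij}$, computed via the mean $\overline a_j/\beta_{jj}$.
\item Conclude the sign-based results. In (COEX) both boundary measures have positive invasion rates, so the boundary is repelling (stochastic persistence), giving item 3. In (EXCLi) only one boundary measure is invadable, giving item 2.
\end{enumerate}

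\textbf{Case (EXCL).} Both boundary measures are non-invadable, so some boundary is attained. The two boundaries cannot both be attained, since each monomorphic type persists (Proposition \ref{prop_monomorph}). I still need to rule out convergence to an interior attractor. This is where the hypothesis $a_1/a_2\neq\overline a_1/\overline a_2$ should enter: it guarantees that the catastrophe jumps move the log-ratio off the deterministic interior saddle's stable manifold. The mechanism is that jumps shift $(Y^{(1)},Y^{(2)})$ by $(\ln\theta^{(1)},\ln\theta^{(2)})$, which is not collinear with the drift direction. Combined with the Lyapunov combination, this should show that the interior point is repelled almost surely.

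\textbf{Main obstacle.} I expect the main obstacle to be the boundary-attraction argument without a spectral-gap tool. In particular, it must hold under only the moment condition of Remark \ref{rem_moments} and with unbounded negative log-jumps. I would first try to control everything through the explicit Lévy structure of $K$: its law of large numbers, plus the finiteness of $\underline{\mathscr K}_\infty$ from Lemma \ref{momentinf}.
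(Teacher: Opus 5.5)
\textbf{The gap is item 1, the bistable case \eqref{excl}.} Your sentence ``both boundary measures are non-invadable, so some boundary is attained'' is not a theorem. Negative invasion rates only give \emph{local} attraction: type $i$ dies out with probability close to $1$ when $Z^{(i)}$ starts very small and $Z^{(j)}$ stays bounded away from $0$. This is Lemma~\ref{lem:comp}(c): while $Z^{(i)}\le\eta$, one has $Z^{(i)}_t\le z^{(i)}e^{\widehat L_t}$ with $\widehat L_t/t\to\ell_i^+(\eta)<0$, and $\sup_t e^{\widehat L_t}<\infty$ a.s.

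To reach probability one, you must show the path cannot wander in the interior forever, for instance by switching between the two basins infinitely often. The occupation-measure framework of your Step 3 does not give this. It would require excluding interior invariant laws, which here is essentially the statement itself. Your proposed mechanism does not address it either. With catastrophes at positive rate the process never converges to an interior point anyway, and kicking it off the saddle's stable manifold does not prevent recurrent switches between basins.

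What is missing is a \emph{uniform accessibility} estimate. You need $t_0,\delta>0$ such that, from every point of $\mathscr A^*$ outside $[0,\eta)^2$, the process enters one of the two regions covered by the local estimate within time $t_0$, with probability at least $\delta$. Combine this with the a.s.\ exit from $[0,\eta]^2$ and a conditional Borel--Cantelli argument (Lemma~\ref{lem_recurrence}); this yields extinction of one type with probability $\ge p$ for every $p<1$.

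The hypothesis $a_1/a_2\neq\bar a_1/\bar a_2$, i.e.\ $a_1q_2\neq a_2q_1$, enters exactly here (Lemma~\ref{irred}). It supplies, with positive probability, catastrophes whose ratio $\ln\eta_1/\ln\eta_2$ lies on the correct side of $a_1/(a_2-\beta_{21}x_0-\beta_{22}y_0)$. A burst of $k$ such catastrophes in $[0,1]$, followed by none on $[1,t_0]$, pushes the disfavoured type below $x_0$ while the favoured one regrows above $y_0$, uniformly in the starting point. Your remark about jumps not being collinear with the drift points at the right comparison, namely the drift $(a_1,a_2)$ of $(Y^{(1)},Y^{(2)})$ near the origin. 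It still has to be turned into this quantitative, uniform statement.

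\textbf{Items 2 and 3 can be closed with your own tools,} more simply than the paper, which treats \eqref{excli} with the same comparison/accessibility/recurrence scheme as \eqref{excl}.

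\emph{Step 1 and item 3.} Your Step 1 is already rigorous without any recurrence. The key lemma follows by sandwiching $Z^{(j)}$ between the monomorphic process with drift $a_j$ and, on $\{Z^{(i)}_s\le\delta,\ s\ge n\}$, the one with drift $a_j-\beta_{ji}\delta$ started at time $n$, together with \eqref{eq:cesaro}. Then on $\{Z^{(i)}\to0\}$ you get $\frac1t\ln Z^{(i)}_t\to\mathfrak f_{ij}>0$, a contradiction. Applied to both types, this is item 3, with no persistence theory needed.

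\emph{Step 2 and item 2.} No excursion averages are needed once you choose the combination by the sign of $D:=\beta_{11}\beta_{22}-\beta_{12}\beta_{21}$. If $D\ge0$, then $\beta_{ii}Y^{(j)}_t-\beta_{ji}Y^{(i)}_t\le C+L_t$ with $L_t/t\to\beta_{ii}\mathfrak f_{ji}<0$. If $D<0$, then $\beta_{jj}Y^{(i)}_t-\beta_{ij}Y^{(j)}_t\ge C'+L'_t$ with $L'_t/t\to\beta_{jj}\mathfrak f_{ij}>0$. Since $Y^{(i)}$ is bounded above, in both cases $Y^{(j)}_t\to-\infty$ linearly.

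\emph{Why this fails for item 1.} In \eqref{excl} one necessarily has $D<0$ and both fitnesses negative. In each combination the L\'evy drift and the integral term then have opposite signs, so the Lyapunov trick cannot replace the accessibility argument.

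Finally, the impossibility of simultaneous extinction follows directly from your identity: on that event $\frac1t\ln Z^{(1)}_t\to\bar a_1>0$. It does not follow from Proposition~\ref{prop_monomorph}.
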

\begin{comment}
\begin{rem}
    With the notation \eqref{def_qi}, the additional assumption of the first case reads $a_1q_2\neq a_2q_1$: it excludes the degenerate situation where catastrophes are exactly neutral, that is, where neither $\ADVref{1}$ nor $\ADVref{2}$ holds and where the ratio $\overline a_1/\overline a_2$ does not depend on $r$, see \eqref{monotoniea/a}. It is only needed in the bistable case \eqref{excl}, in which the two regions of the state space leading to the extinction of one or the other type are reached thanks to a well-chosen sequence of catastrophes, favourable to one type relatively to the other (see Lemma \ref{irred}). We expect the conclusion to remain valid in the neutral case, which our accessibility argument does not cover.
\end{rem}
\end{comment}
In the next proposition, we refine the results and quantify the asymptotic mean size of both populations in the different scenarios.

\begin{prop}
\label{ergofaible}
    Suppose that $\E\left[(\ln \Theta^{(i)})^2\right]<\infty$ for $i\in\{1,2\}$.
    Let $Z$ be the unique strong solution of \eqref{eqdep} with $(z_0^{(1)},z_0^{(2)}) \in (\R_+^*)^2$. Then:
    \begin{enumerate}
        \item If (\ref{excl}) or (\ref{excli}) for some $i\in \{1,2\}$, then for each $j=1,2$, we have $$\displaystyle\lim_{t\to\infty}\frac{1}{t}\int_0^tZ^{(j)}_sds=\frac{\overline{a}_j}{\beta_{jj}}\ind_{\{\lim_{t\to\infty} Z_t^{(3-j)}=0\}}~~\text{almost surely}.$$
        \item If (\ref{coex}), then the following limits almost surely exist
        $$z^{(1)}_\infty:=\lim_{t\to\infty}\frac{1}{t}\int_0^tZ^{(1)}_sds~~\text{ and }~~z^{(2)}_\infty:=\lim_{t\to\infty}\frac{1}{t}\int_0^tZ^{(2)}_sds,$$
        and satisfy
        \begin{equation*} \label{syst-coex}
    \left\{\begin{array}{ll}
            \overline{a}_1-\beta_{11}z^{(1)}_\infty-\beta_{12}z^{(2)}_\infty &=0,  \\
            \overline{a}_2-\beta_{21}z^{(1)}_\infty-\beta_{22}z^{(2)}_\infty &=0 
        \end{array}\right.        
        \end{equation*}
        whose unique solution is that of the deterministic system with $a_i$ replaced by
$\overline{a}_i$ (see \eqref{xyeq}). 
    \end{enumerate}
\end{prop}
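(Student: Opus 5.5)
The natural tool is the logarithmic identity coming from the It\^o formula \eqref{Ito}. For $j\in\{1,2\}$ and $t\geq0$, taking logarithms of $Z^{(j)}_te^{-K^{(j)}_t}$ and integrating gives
\begin{equation*}
\frac{1}{t}\ln\frac{Z^{(j)}_t}{z_0^{(j)}}=\frac{K^{(j)}_t}{t}-\beta_{j1}\frac{1}{t}\int_0^tZ^{(1)}_sds-\beta_{j2}\frac{1}{t}\int_0^tZ^{(2)}_sds .
\end{equation*}
By the strong law of large numbers for the L\'evy process $K^{(j)}$ (Remark \ref{rem_moments}), $K^{(j)}_t/t\to\overline a_j$ almost surely. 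So everything reduces to showing that $\frac1t\ln Z^{(j)}_t\to0$ almost surely on the relevant events. Then the time averages solve the linear relations displayed in the statement, possibly with one of them replaced by the information that the corresponding average vanishes.

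\textbf{Upper bound.} Both coordinates are bounded above by a deterministic constant. Indeed, $Z^{(j)}$ is dominated by the deterministic logistic flow between catastrophes, and catastrophes only decrease it, so $Z^{(j)}_t\leq c:=z_0^{(j)}\vee a_j/\beta_{jj}$. Hence $\limsup_t\frac1t\ln Z^{(j)}_t\leq0$ always holds.

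\textbf{Exclusion case (1).} On the event $\{Z^{(3-j)}_t\to0\}$, the Ces\`aro average of $Z^{(3-j)}$ tends to $0$. For the surviving type $j$, I would compare with the monomorphic process. Because $Z^{(3-j)}\to0$, the competition term $\beta_{j,3-j}Z^{(3-j)}$ is eventually smaller than any $\delta>0$. Then $Z^{(j)}$ is bounded below by the explicit solution \eqref{Z_explicite}, with $a_j$ replaced by $a_j-\delta$ and started at a later random time. Since $\overline a_j-\delta>0$, the corresponding $K$ grows linearly. Formula \eqref{Z_explicite} then gives $\liminf\frac1t\ln Z^{(j)}_t\geq0$, because its numerator is $e^{K_t}$ and its denominator is of order $\int_0^te^{K_s}ds$, whose logarithm is $K_t+o(t)$. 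This last point uses the fact that the jumps of $K$ satisfy $\ln\Theta/\sqrt{n}\to0$, which I would derive from the finite second moment via Borel--Cantelli. This gives the average $\overline a_j/\beta_{jj}$. On the complement of $\{Z^{(3-j)}_t\to0\}$, Proposition \ref{3cas} tells us that $Z^{(j)}\to0$ almost surely. In the (EXCL) case with its extra assumption, and in (EXCLi), this determines everything, so the Ces\`aro average of $Z^{(j)}$ is $0$. Together these cover the indicator formula, since almost surely exactly one type vanishes.

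\textbf{Coexistence case (2) and main obstacle.} The main obstacle is (COEX), where we need $\frac1t\ln Z^{(j)}_t\to0$ for both $j$, i.e.\ neither coordinate becomes exponentially small along a subsequence. The plan is to use positive recurrence. Proposition \ref{3cas}(3) gives persistence, and I would strengthen it to the existence of an invariant probability on $(\R_+^*)^2$ together with pathwise ergodicity (Birkhoff). To do so, I would build a Lyapunov function $V(z)=-p_1\ln z^{(1)}-p_2\ln z^{(2)}+\ldots$, in the spirit of stochastic persistence theory (Benaïm--Schreiber, Hening--Nguyen). Its generator along the boundary is controlled by the invasion rates $\mathfrak f_{12},\mathfrak f_{21}>0$, evaluated through the monomorphic stationary laws $\pi$ of Remark \ref{rem_lossofmemory}, whose mean is $\overline a_j/\beta_{jj}$ (case (1) applied to the monomorphic process). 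The second-moment assumption on $\ln\Theta$ supplies the martingale control needed for $\frac1t(\text{martingale part})\to0$.

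Given this, Birkhoff's theorem shows that the Ces\`aro averages converge almost surely to the means $z^{(j)}_\infty$ under the invariant law. It also shows $\frac1t\ln Z^{(j)}_t\to0$, since $\ln Z^{(j)}$ cannot drift linearly under a stationary regime with sublinear fluctuations. Plugging into the identity above yields the linear system with $\overline a_i$ in place of $a_i$. Under (COEX), this system is nondegenerate, with unique solution \eqref{xyeq}.
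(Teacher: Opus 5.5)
You handle the reduction to $\frac1t\ln Z^{(j)}_t\to0$ via \eqref{Ito}, the a priori upper bound, and the exclusion case (1) correctly. In (1), your pathwise comparison after the non-stopping time $\tau$ with the explicit monomorphic solution of rate $a_j-\delta$ is more direct than the paper's argument. Also, $\ln\int_\tau^t e^{L_s}ds\le \ln t+\sup_{s\le t}L_s=L_t+o(t)$ follows from the strong law of large numbers alone, so you need neither Borel--Cantelli on the jumps nor the second moment there.

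The genuine gap is in (COEX), which is exactly where the proposition needs an idea, and there you only gesture at stochastic persistence theory. Your function $V=-p_1\ln z^{(1)}-p_2\ln z^{(2)}$ has generator $-\sum_j p_j(\bar a_j-\beta_{j1}z^{(1)}-\beta_{j2}z^{(2)})$, which in general is not negative near the axes. You would therefore need the full average-Lyapunov machinery (boundary ergodic measures, Feller property, tightness of occupation measures) for a PDMP with unbounded logarithmic jumps, and none of it is set up.

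More seriously, the Birkhoff step does not give what you need. It yields almost sure convergence only for $\mu$-almost every initial point of the stationary process, towards a limit that may depend on the ergodic component. The statement, however, is for every deterministic $z_0\in(\R_+^*)^2$. Passing to every initial condition requires uniqueness of the interior invariant law together with a Harris-type recurrence or coupling argument. For general $\kappa$ (e.g.\ a Dirac mass, where only the jump times are random) that requires an accessibility and regularity analysis you do not address. Likewise, stationarity alone gives $\frac1t\ln Z^{(j)}_t\to0$ only in probability. Note also that the martingale part of $\ln Z^{(j)}$ needs only $\E|\ln\Theta^{(j)}|<\infty$, so that is not where the second moment matters.

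The missing idea, used in the paper's Lemma \ref{lnZt/t}, is a direct pathwise bound on how deep $Z^{(i)}$ can dive during its excursions below a small level. Under (COEX), Lemma \ref{lem:comp}(b) (with $\ell_i^-(\eta)>0$) shows that $Z^{(i)}$ returns above $\eta$ at arbitrarily large times. So the path splits into excursions from below $\eta/2$ up to $\eta$, each of length at least $\ln 2/a_i$ since the growth rate is at most $a_i$. On the $n$-th excursion, $\ln Z^{(i)}$ is bounded below by $\ln(\eta\Theta_n/2)$ plus the infimum of the restarted process $\widecheck L^{(i)}$ of \eqref{eq:focal_lower}. In turn, $\widecheck L^{(i)}$ is bounded below by a L\'evy process drifting to $+\infty$ plus a remainder of finite mean. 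Hence the log-minima dominate an i.i.d.\ sequence $(X_n)$ with finite mean; this is where $\E[(\ln\Theta^{(i)})^2]<\infty$ enters, through Lemma \ref{momentinf}. Then $X_n/n\to0$ almost surely yields $\liminf_t\frac1t\ln Z^{(i)}_t\ge0$ from every initial condition, with no invariant measure needed.

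If you want to keep a persistence-type route, its natural endpoint is the empirical occupation measures rather than Birkhoff. Show that almost surely all their limit points are invariant laws on $(\R_+^*)^2$ under which $\ln z^{(j)}$ is integrable. Applying the generator to $\ln z^{(j)}$ then shows that their means solve the linear system, hence are unique.
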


In what follow, let
\begin{align}\label{eq:z2bar}
\overline{z}_2:=\begin{cases}
    z_{\infty}^{(2)}&\text{ if \ \eqref{coex},}\\
\overline{a}_2/\beta_{22}&\text{ if \ $\EXCLref{2}$}.
\end{cases}
\end{align}

\subsection{When catastrophes reverse the hierarchy of traits}
\label{subsec_reversion}

Propositions~\ref{3cas} and \ref{ergofaible} reveal counterintuitive behaviours arising from the introduction of random catastrophes into the deterministic system \eqref{def_LV_det}. In this section, we compare the long-time behaviour of the system with ($r>0$) and without ($r=0$) catastrophes, all the other parameters being fixed.

Assume for instance that $f_{12}>0>f_{21}$, so that without catastrophes type $2$ gets extinct while type $1$ reaches a positive stationary state. Assume in addition that
\begin{equation}\label{cond:jump}
\E\big[|\ln \Theta^{(1)}|\big] < \E\big[|\ln \Theta^{(2)}|\big] ,
\end{equation}
so that catastrophes are, on a logarithmic scale, more harmful to type $2$ than to type~$1$. One would then expect catastrophes to reinforce the dominance of type~$1$. This is not the case: \eqref{cond:jump} is neither necessary nor sufficient for the system with catastrophes to satisfy $\EXCLref{1}$. The correct condition, given in Definition~\ref{def_ADV} below, involves the growth rates as well.
\begin{comment} Throughout this section we set

\begin{equation} \label{def_qi}
q_i:=-\E[\ln\Theta^{(i)}]=\E[|\ln\Theta^{(i)}|]\in(0,\infty), \quad i=1,2,
\qquad\text{so that }\ \bar a_i=a_i-rq_i.
\end{equation}
\end{comment}
\subsubsection*{Visualisation of the three regimes on a single axis}

For $\alpha=(\alpha_1,\alpha_2)\in(\R_+^*)^2$ write $\mathfrak f^{\alpha}_{ij}:=\alpha_i-\frac{\beta_{ij}}{\beta_{jj}}\alpha_j$, so that $f_{ij}=\mathfrak f^{(a_1,a_2)}_{ij}$ and $\mathfrak f_{ij}=\mathfrak f^{(\bar a_1,\bar a_2)}_{ij}$. For $i\neq j$,
\begin{equation*} \label{sign_mathfrakf}
\mathfrak f^{\alpha}_{ij}>0
\iff \frac{\alpha_i}{\alpha_j}>\frac{\beta_{ij}}{\beta_{jj}},
\end{equation*}
so that the three regimes only depend on the position of the ratio $\alpha_1/\alpha_2$ with respect to the two thresholds $\beta_{12}/\beta_{22}$ and $\beta_{11}/\beta_{21}$:
\begin{equation}\label{sign_f}
\begin{array}{c}
\EXCLref{i} \iff
\frac{\alpha_i}{\alpha_j}>\frac{\beta_{ij}}{\beta_{jj}}\vee\frac{\beta_{ii}}{\beta_{ji}},
\quad
\eqref{coex}\iff \frac{\beta_{12}}{\beta_{22}}<\frac{\alpha_1}{\alpha_2}<\frac{\beta_{11}}{\beta_{21}},\\
\eqref{excl}\iff \frac{\beta_{11}}{\beta_{21}}<\frac{\alpha_1}{\alpha_2}<\frac{\beta_{12}}{\beta_{22}} .
\end{array}
\end{equation}
Applying \eqref{sign_f} with $\alpha=(\bar a_1,\bar a_2)$ gives the classification of Proposition~\ref{3cas}, and with $\alpha=(a_1,a_2)$ the one of Proposition~\ref{compLVdet}. In particular, coexistence is possible, for a suitable value of the ratio $\alpha_1/\alpha_2$, if and only if
\begin{equation}\tag{$\beta$-COEX}\label{Bcoex}
\frac{\beta_{12}}{\beta_{22}}<\frac{\beta_{11}}{\beta_{21}},
\end{equation}
a condition unaffected by the catastrophes. The two possible scenarios are drawn in Figure~\ref{fig:axes}.

\begin{figure}[ht]
\centering
\begin{tikzpicture}[>=stealth,scale=1.05]
  \draw[->] (-1,0) -- (6,0) node[right] {$\frac{\alpha_1}{\alpha_2}$};
  \foreach \x/\name in {1.5/{\frac{\beta_{12}}{\beta_{22}}}, 4/{\frac{\beta_{11}}{\beta_{21}}}}
    {\draw (\x,0.2) -- (\x,-0.2) node[below=3pt] {$\name$};}
  \node[below=8pt] at (0.4,0) {$\EXCLref{2}$};
  \node[below=8pt] at (2.75,0) {\eqref{coex}};
  \node[below=8pt] at (5.2,0) {$\EXCLref{1}$};
  \node at (2.5,0.75) {\small \eqref{Bcoex} holds};
\end{tikzpicture}

\vspace{0.8cm}
\begin{tikzpicture}[>=stealth,scale=1.05]
  \draw[->] (-1,0) -- (6,0) node[right] {$\frac{\alpha_1}{\alpha_2}$};
  \foreach \x/\name in {1.5/{\frac{\beta_{11}}{\beta_{21}}}, 4/{\frac{\beta_{12}}{\beta_{22}}}}
    {\draw (\x,0.2) -- (\x,-0.2) node[below=3pt] {$\name$};}
  \node[below=8pt] at (0.4,0) {$\EXCLref{2}$};
  \node[below=8pt] at (2.75,0) {\eqref{excl}};
  \node[below=8pt] at (5.2,0) {$\EXCLref{1}$};
  \node at (2.5,0.75) {\small \eqref{Bcoex} does not hold};
\end{tikzpicture}
\caption{The different accessible regimes of the system, depending on the position of $\alpha_1/\alpha_2$, with $\alpha=(\bar a_1,\bar a_2)$ (resp. $\alpha=(a_1,a_2)$) with (resp. without) catastrophes. Only the ratio depends on $r$: adding catastrophes makes the system move along the axis, without changing the two thresholds.}
\label{fig:axes}
\end{figure}

\begin{rem}\label{rem_marginals}
The averaged growth rates $\bar a_i$, hence the fitnesses $\mathfrak f_{ij}$ and the whole classification of Propositions~\ref{3cas} and \ref{ergofaible}, depend on the law $\kappa$ only through its marginals. The dependence structure between $\Theta^{(1)}$ and $\Theta^{(2)}$ is thus irrelevant for the long-time behaviour, although it does enter the accessibility argument of Lemma~\ref{irred}.
\end{rem}

\subsubsection*{Beneficial catastrophes}
Recall the notation $q_i$ introduced in \eqref{def_overline_ai} and for $i\in \{1,2\}$, let $r_i^{*}:=a_i/q_i$, so that $\bar a_i>0$ if and only if $r<r^{*}_i$; for $r \in [0,r_1^{*}\wedge r_2^{*})$ we have
\begin{equation} \label{monotoniea/a}
\frac{\bar a_1}{\bar a_2}=\frac{a_1-rq_1}{a_2-rq_2}
=\frac{a_1}{a_2}+\frac{r}{a_2\bar a_2}\left(a_1q_2-a_2q_1\right),
\qquad
\frac{d}{dr}\left(\frac{\bar a_1}{\bar a_2}\right)=\frac{a_1q_2-a_2q_1}{\bar a_2^{\,2}},
\end{equation}
the terms in $r$ cancelling in the derivative. Hence, the ratio $\bar a_1/\bar a_2$ is strictly monotone in $r$, its sign of variation being that of $a_1q_2-a_2q_1$, which depends only on the growth rates and the marginals of $\kappa$. This motivates the following definition.

\begin{defi}\label{def_ADV}
We say that catastrophes are beneficial to type $i$, relative to type $j$, iff
\begin{equation}\label{ADVi}\tag{ADVi}
a_i\E[|\ln\Theta^{(j)}|]>a_j\E[|\ln\Theta^{(i)}|],
\qquad\textit{i.e.}\qquad a_iq_j>a_jq_i .
\end{equation}
\end{defi}

Equivalently, $\ADVref{i}$ holds if and only if $r\mapsto\bar a_i/\bar a_j$ is increasing, if and only if $r^{*}_i>r^{*}_j$. In that case, increasing $r$ pushes the system to the right on the axis of Figure~\ref{fig:axes} when $i=1$, and to the left when $i=2$: the regimes are therefore visited in the order given by that
axis.  In particular, under \eqref{Bcoex} a deterministic exclusion may successively become coexistence and then the opposite exclusion. The next proposition quantifies this last transition.

\begin{prop}\label{prop_ADV_EXCL}
Assume $\ADVref{i}$ and let $(\beta_{kl})_{k,l=1,2}$ be positive. Set
\begin{equation*}\label{def_r0}
C:=\frac{\beta_{ij}}{\beta_{jj}}\vee\frac{\beta_{ii}}{\beta_{ji}},
\qquad
r_0:=\left(\frac{Ca_j-a_i}{Cq_j-q_i}\right)\vee 0.
\end{equation*}
Then $0\le r_0<r^{*}_j$ and, for every $r\in(r_0,r^{*}_j)$, Assumption \ref{ass:a_bar_positive} holds and the system satisfies $\EXCLref{i}$. Moreover $(r_0,r^{*}_j)$ is exactly the set of $r\in[0,r_j^{*})$ for which $\EXCLref{i}$ holds.
\end{prop}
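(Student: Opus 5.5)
The proof is elementary algebra on the ratio $\bar a_i/\bar a_j$, based on the characterisation \eqref{sign_f}. With $\alpha=(\bar a_1,\bar a_2)$, $\EXCLref{i}$ holds if and only if $\bar a_i/\bar a_j>C$, provided $\bar a_j>0$ and $\bar a_i>0$.

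First I show that Assumption \ref{ass:a_bar_positive} holds on $[0,r_j^*)$. $\ADVref{i}$ gives $r_i^*>r_j^*$, so for $r<r_j^*$ both $\bar a_j=a_j-rq_j>0$ and $\bar a_i=a_i-rq_i>0$. (The growth rates $a_k$ are positive here, otherwise $r_k^*$ makes no sense.)

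Next, for $r\in[0,r_j^*)$ I multiply the inequality $\bar a_i/\bar a_j>C$ by $\bar a_j>0$. It becomes the affine inequality
\begin{equation*}
a_i-rq_i>C(a_j-rq_j)\iff r\,(Cq_j-q_i)>Ca_j-a_i .
\end{equation*}
The key step is to show $Cq_j-q_i>0$. Write $\phi(r):=\bar a_i-C\bar a_j=(a_i-Ca_j)+r(Cq_j-q_i)$. At $r=r_j^*$ we have $\bar a_j=0$ and $\phi(r_j^*)=\bar a_i(r_j^*)=a_i-a_jq_i/q_j$, which is $>0$ by $\ADVref{i}$. Suppose instead $Cq_j-q_i\le 0$. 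Then $C\le q_i/q_j<a_i/a_j$, so $\phi(0)=a_i-Ca_j>0$. An affine function that is positive at both ends of $[0,r_j^*]$ is positive on the whole interval. That alone is not a contradiction, so I handle this subcase directly: $\EXCLref{i}$ then holds for all $r\in[0,r_j^*)$. Moreover $(Ca_j-a_i)/(Cq_j-q_i)$ has a negative numerator and a nonpositive denominator; when the denominator is strictly negative the quotient is positive, which would make $r_0>0$, a mismatch.

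To resolve this I use the slope formula \eqref{monotoniea/a} rather than $\phi$. Since $\bar a_i/\bar a_j$ is increasing in $r$ and tends to $+\infty$ as $r\uparrow r_j^*$, the set where it exceeds $C$ is an interval $(\rho,r_j^*)$ with $\rho<r_j^*$ (or $[0,r_j^*)$ if $\rho<0$). It remains to identify $\rho$ with $r_0$. If $a_i/a_j\ge C$, the set is all of $[0,r_j^*)$ (or $(0,r_j^*)$ in the boundary case), and I must check $r_0=0$. That requires $Ca_j-a_i\le0$ together with a positive denominator. I expect the main obstacle to be exactly this sign bookkeeping when $Cq_j\le q_i$. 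I would first try to show it forces $a_i/a_j\ge C$, using $a_i/a_j>q_i/q_j\ge C$, and then, if necessary, interpret the convention for $r_0$ in that degenerate case as $0$.

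In the main case $a_i/a_j<C$, $\phi$ goes from negative at $0$ to positive at $r_j^*$. Hence its slope $Cq_j-q_i$ is positive and its unique root is $\rho=(Ca_j-a_i)/(Cq_j-q_i)\in(0,r_j^*)$, which equals $r_0$. On $[0,r_j^*)$ we then have $\phi(r)>0\iff r>r_0$. This gives both the inclusion and the exact characterisation, and the bound $0\le r_0<r_j^*$.
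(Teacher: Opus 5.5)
Your treatment of the case $a_i\le Ca_j$ is correct, and in fact more complete than the paper's. The paper shows that $r\mapsto\bar a_i/\bar a_j$ is a continuous increasing bijection from $[0,r_j^*)$ onto $[a_i/a_j,+\infty)$, and then simply asserts that $\{r<r_j^*:\bar a_i/\bar a_j>C\}=(r_0,r_j^*)$. Your affine function $\phi=\bar a_i-C\bar a_j$, with $\phi(0)=a_i-Ca_j\le0<\phi(r_j^*)=\bar a_i(r_j^*)$, is what actually proves $Cq_j-q_i>0$ and identifies the crossing point with the formula for $r_0$.

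The subcase you leave open cannot be closed by sign bookkeeping or by a convention, because the statement as written is false there. If $Cq_j<q_i$, then $\ADVref{i}$ gives $C<q_i/q_j<a_i/a_j$. Hence $r_0=(a_i-Ca_j)/(q_i-Cq_j)>0$ and $r_0-r_j^*=(a_iq_j-a_jq_i)/\big(q_j(q_i-Cq_j)\big)>0$, again by $\ADVref{i}$. So $r_0>r_j^*$, contradicting $0\le r_0<r_j^*$, and $(r_0,r_j^*)=\emptyset$ although, as you observed, $\EXCLref{i}$ holds on all of $[0,r_j^*)$.

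A concrete instance: take $i=1$, $a_1=10$, $a_2=1$, $\Theta^{(1)}\equiv e^{-5}$, $\Theta^{(2)}\equiv e^{-1}$ (so $q_1=5$, $q_2=1$), $\beta_{12}=2$ and $\beta_{11}=\beta_{21}=\beta_{22}=1$. Then $C=2$, $r_2^*=1$ and $r_0=8/3$, while $\bar a_1/\bar a_2=(10-5r)/(1-r)\ge10>C$ on $[0,1)$.

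The other subcases of $a_i>Ca_j$ also fail. If $Cq_j=q_i$ the formula is undefined. If $Cq_j>q_i$ it gives $r_0=0$, while the set of $r\in[0,r_j^*)$ where $\EXCLref{i}$ holds is $[0,r_j^*)$, not $(0,r_j^*)$. The paper's own proof has the same hole, in its last sentence.

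So instead of reinterpreting $r_0$ by convention, state explicitly what is true. The proposition holds as written under the extra hypothesis $a_i\le Ca_j$, i.e.\ when type $i$ does not already strictly exclude type $j$ at $r=0$; this is the reversal situation the proposition is meant to quantify. In general it holds after setting $r_0:=0$ when $a_i\ge Ca_j$, and replacing $(r_0,r_j^*)$ by $[0,r_j^*)$ when $a_i>Ca_j$.

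Your remark that $Cq_j\le q_i$ forces $a_i/a_j>C$ is true but does not help. What the formula needs in that case is a positive denominator, and that is exactly what fails.
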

\begin{proof}
The condition $\ADVref{i}$ reads $r_i^{*}>r_j^{*}$, so that $\bar a_i(r)>\bar a_i(r_j^{*})=q_i(r^*_i-r^*_j)>0$ and $\bar a_j(r)>0$ for every $r\in[0,r^{*}_j)$. Then,  Assumption \ref{ass:a_bar_positive} holds on $[0,r^{*}_j)$ and, by \eqref{sign_f}, $\EXCLref{i}$ holds iff $\bar a_i/\bar a_j>C$. By
\eqref{monotoniea/a} the function $r\mapsto\bar a_i(r)/\bar a_j(r)$ is continuous and  increasing on $[0,r^{*}_j)$, and tends to $+\infty$ as $r\uparrow r^{*}_j$ since $\bar a_i(r^{*}_j)>0$ while $\bar a_j(r)\to0^{+}$. Therefore, it is an increasing bijection from $[0,r^{*}_j)$ to $[a_i/a_j,+\infty)$, so that $\{r<r^{*}_j:\ \bar a_i/\bar a_j>C\}=(r_0,r^{*}_j)$.
\end{proof}

\subsubsection*{A numerical example}
In Figure \ref{fig:simu}, we give an example of a set of parameters satisfying both $\ADVref{1}$ and $\Theta^{(1)}<\Theta^{(2)}$ almost surely: catastrophes are beneficial (in the long-run) to type-$1$ individuals, even if (at the time of the catastrophe), they affect more type-$1$ individuals than type-$2$. Note that this can only occur with $a_1>a_{2}$. Heuristically, the addition of catastrophes will on average "pull" the system closer to the value $(0,0)$, that is, bring it to a regime where the intrinsic growth rate becomes more important than the effect of competition, which is from the previous remark a regime that benefits $1$. As an example, if without catastrophes a typical individual of type $2$ kills $3$ individuals of type $1$ because of competition during its life, we expect the type $1$ to benefit from catastrophes that kill on average two type-$1$ individuals for one type-$2$ individual.
\begin{figure}\label{fig:simu}
\begin{minipage}{1\textwidth}

    \centering
    \begin{minipage}{0.335\textwidth}
        \centering
        \includegraphics[width=\linewidth]{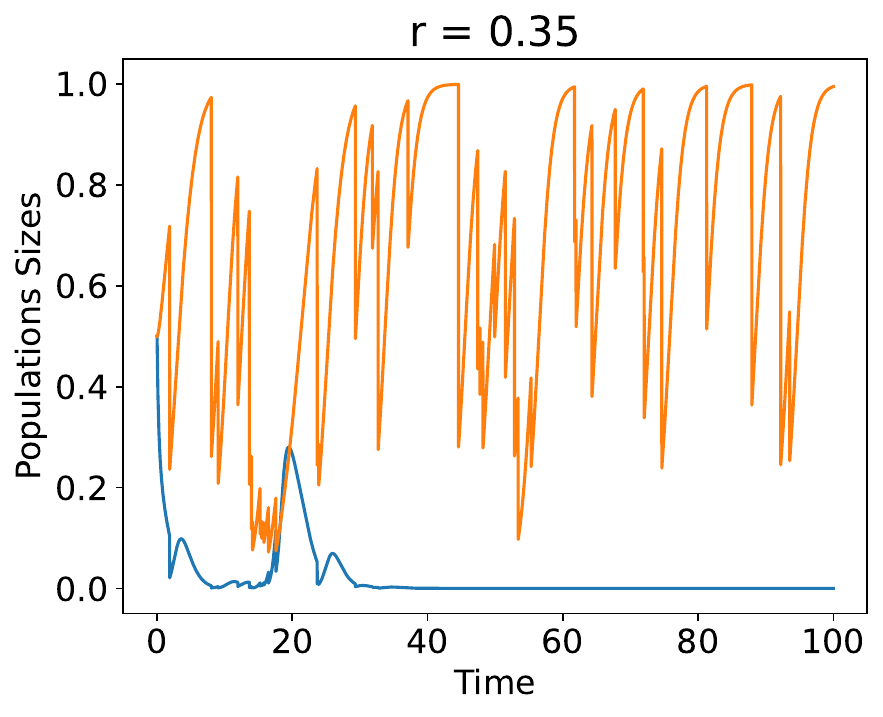}
    \end{minipage}
    \begin{minipage}{0.32\textwidth}
        \centering
        \includegraphics[width=\linewidth]{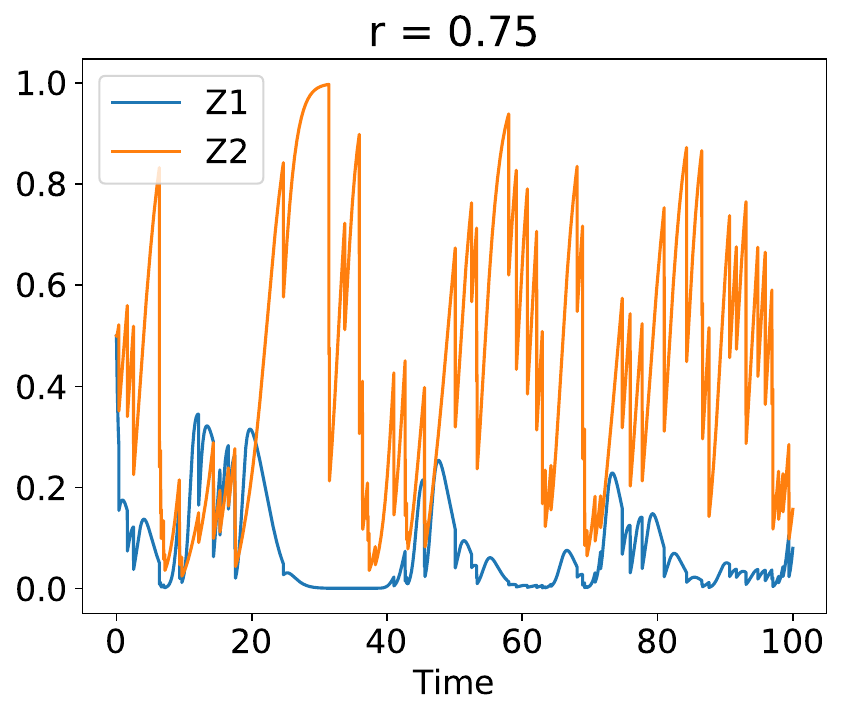}
    \end{minipage}
    \begin{minipage}{0.32\textwidth}
        \centering
        \includegraphics[width=\linewidth]{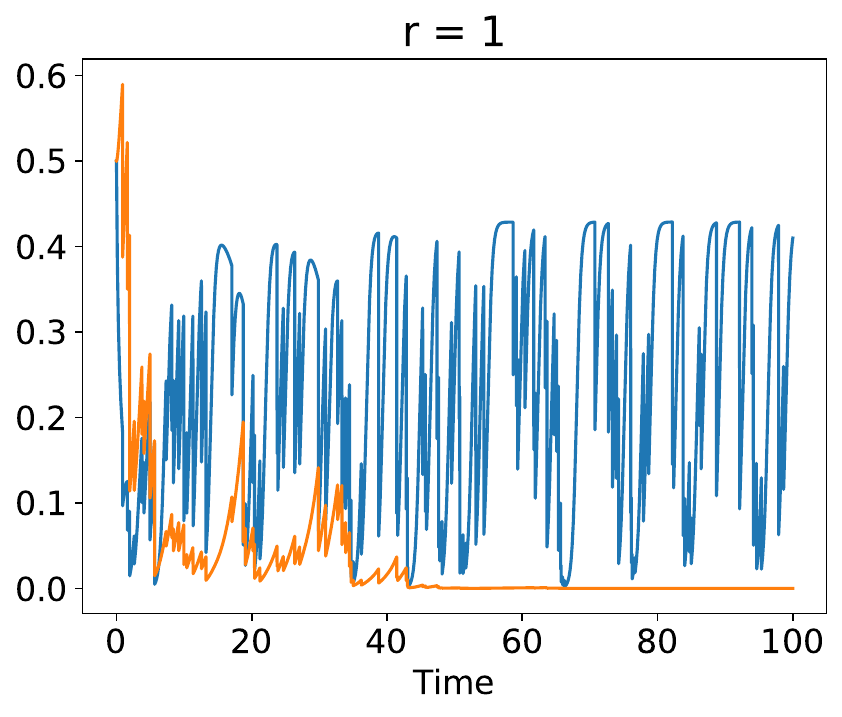}
    \end{minipage}
\vspace{0.2cm}
\begin{tikzpicture}[>=stealth,scale=1.2]
  \draw[->] (-1.5,0) -- (10,0) node[below=3pt] {$\frac{\overline{a}_1}{\overline{a}_2}$} node[above=3pt]{$r$};

  \foreach \x/\name/\nameb in {2.7/{\frac{\beta_{12}}{\beta_{22}}=4}/{}, 6.6/{\frac{\beta_{11}}{\beta_{21}}=7}/{}} {
    \draw (\x,0.2) -- (\x,-0.2) node[below=14pt] {$\name$} node[above=18pt] {$\nameb$};
  }

  \node[below=8pt] at (-1,0) {$\EXCLref{2}$};
  \node[below=8pt] at (4.7,0) {\eqref{coex}};
  \node[below=8pt] at (8.4,0) {$\EXCLref{1}$};

  \node at (0.5,0) {\color{olive} $\bullet$};
  \node[above=3pt] at (0.5,0) {\footnotesize $r=0.35$};
  \node[below=3pt] at (0.5,0) {\footnotesize $\frac{\overline{a}_1}{\overline{a}_2}\simeq3.54$};

  \node at (3.5,0) {\color{olive} $\bullet$};
  \node[above=3pt] at (3.5,0) {\footnotesize $r=0.75$} ;\node[below=3pt] at (3.5,0) {\footnotesize $\frac{\overline{a}_1}{\overline{a}_2}\simeq4.93$};

  \node at (7.3,0) {\color{olive} $\bullet$};
  \node[above=3pt] at (7.3,0) {\footnotesize $r=1$};
  \node[below=3pt] at (7.3,0) {\footnotesize $\frac{\overline{a}_1}{\overline{a}_2}\simeq7.38$};
\end{tikzpicture}
\end{minipage}
\caption{Three realizations of \eqref{eqdep} with different catastrophe rates $r$ and
$a_1=3$, $a_2=1$, $\beta_{11}=7$, $\beta_{12}=4$, $\beta_{21}=1$, $\beta_{22}=1$, $\Theta^{(1)}\sim\operatorname{Unif}(1/8,5/8)$ and
$\Theta^{(2)}=\Theta^{(1)}+1/8$ almost surely. Although each catastrophe is more harmful to type~$1$, increasing $r$ turns the exclusion of type~$1$ into
coexistence. Increasing $r$ further, beyond the second threshold $r_0$ even leads to the exclusion of type~$2$: the reversal
of the hierarchy is complete.}
\label{fig:simu}
\end{figure}

\subsubsection*{Link with another type of disturbed Lotka-Volterra}

Benaïm and Lobry \cite{benaim2016lotka} obtained a similar reversal for a different perturbation of \eqref{def_LV_det}: let $(N_t, t\geq 0)$ be a two-state pure jump Markov process, independent of the population, and let $(X_t,Y_t)$ be the solution to the random but continuous differential equation system $(E_{N_t})$ obtained by alternating between two sets of parameters. The classification of the longtime behaviour of the switching system is governed by the signs of the invasion rates $\Lambda_X$ and $\Lambda_Y$, which play the role of our $\mathfrak f_{ij}$: both are the mean growth rate of a rare type when the other one is at its own equilibrium, and their Theorems 3.1, 3.3, 3.4 and 4.1 are the analog of Proposition~\ref{3cas}. 
Note that in \cite{benaim2016lotka}, the invasion fitnesses are defined as an infinitesimal arithmetic mean growth rate. In our case, if $\pi$ denotes the stationary law of resident type $1$ (Remark~\ref{rem_lossofmemory}), from \eqref{eqdep} and Proposition~\ref{ergofaible}, the comparison between the arithmetic mean growth rate and the invasion fitness gives
$$
\lim_{\eps\to0}\lim_{t\to0^{+}}\frac1t\,
\E_{(\pi,\delta_\eps)}\left[\frac{Z^{(2)}_t}{Z^{(2)}_0}-1\right]
=a_2-\frac{\beta_{21}}{\beta_{11}}\bar a_1+r\E[\Theta^{(2)}-1]
>\mathfrak f_{21}
=\lim_{\eps\to 0}
\E_{(\pi,\delta_\eps)}\left[\ln\left(\frac{ Z^{(2)}_1}{ Z^{(2)}_0}\right)%\Big| Z^{(1)}_0\sim \pi, Z_0^{(2)}=\eps
\right],
$$
where we use the notation $\E_{(\pi,\delta_\eps)}$ to denote the expectancy under the initial conditions $Z^{(1)}_0\sim \pi, Z_0^{(2)}=\eps$.
The gap $r\big(\E[\Theta^{(2)}]-1-\E[\ln\Theta^{(2)}]\big)$ is positive by the strict concavity of the logarithm. Because catastrophes act multiplicatively on the population size, only the logarithmic version governs the long-time behaviour.

Finally, both works exhibit the same counterintuitive phenomenon: the random environment - switching in \cite{benaim2016lotka}, catastrophes here - can reverse the final state of the system, even if it is detrimental to the species that eventually survives. This close analogy suggests a more general
question: is there a broader class of randomly perturbed ecological models whose long-term dynamics is entirely determined by the mean growth rates of rare types? This goes beyond the scope of this paper and will be addressed in a future study.

\subsection{Arrival of a mutant in a monomorphic population}
\label{newcomer_section}
We now analyze the probability of invasion of a newcomer in a one-type community by investigating the long-term behaviour of the process with initial conditions $\Gamma_{0}^{(1,K)}$ of order $K$ and $\Gamma_{0}^{(2,K)}=1$. Before stating the main result of this section, let us introduce a family of stopping times. For $\eps\geq0$ and $K\in\mathbb{N}$,
\begin{equation}
    \label{TKtilde}
    \widetilde{T}^K_{\eps}:=\inf\{t\geq 0,\Gamma_t^{(2,K)}=\lceil K\eps\rceil\}.
\end{equation}

\begin{prop}
\label{possible_invasion}
  Assume that \eqref{H3} holds, and for $C>0$ take the initial conditions $(\Gamma_0^{(1,K)},\Gamma_0^{(2,K)})=(\lfloor CK\rfloor, 1)$. Recall the definition of $\overline{z}_2$ in \eqref{eq:z2bar}.

    If $\mathfrak{f}_{21}<0$, then, for any $\mathfrak{a}>0$,
      \begin{equation}
        \label{prop_inv_debut}\left(\frac{\widetilde{T}^K_0\wedge \widetilde{T}^K_{\mathfrak{a}}}{\ln K},\ind_{\{\widetilde{T}^K_0>\widetilde{T}^K_{\mathfrak{a}}\}}\right)\underset{K\to \infty}{\overset{\P}{\longrightarrow}}(0,0).\end{equation}

    If $\mathfrak{f}_{21}>0$, then, for any $z<\overline{z}_2$,
   \begin{equation}
        \label{prop_inv_intermediaire}\lim_{\eps \to 0}\limsup_{K \to \infty}d_{\mathrm{TV}}\left(\left(\frac{\widetilde{T}^K_0\wedge \widetilde{T}^K_{\eps}}{\ln K},\ind_{\{\widetilde{T}^K_0>\widetilde{T}^K_{\eps}\}}\right),B \cdot\left(\mathfrak{f}_{21}^{-1},1\right)\right)=0,\end{equation}
  \begin{equation} \label{prop_inv_final}\limsup_{K \to \infty}d_{\mathrm{TV}}\left(\left(\frac{\widetilde{T}^K_0\wedge \widetilde{T}^K_{z}}{\ln K},\ind_{\{\widetilde{T}^K_0>\widetilde{T}^K_{z}\}}\right),B \cdot \left(\mathfrak{f}_{21}^{-1},1\right)\right)=0,\end{equation}
    where $d_{\mathrm{TV}}$ is the total variation distance and $B$ is a Bernoulli random variable with parameter
$p=p(C)\in(0,1)$, the survival probability of the limiting branching process in random environment. Explicitly, let $0=T_0<T_1<T_2<\cdots$ be the catastrophe times, let $\Theta^{(2)}_k$ be the survival probability of type-$2$ individuals at the $k$th catastrophe, let $(Z^{(1)}_t,t\geq0)$ be the solution of \eqref{eqdep} with $Z^{(1)}_0=C$ and $Z^{(2)}\equiv0$. Then,
\begin{equation}\label{formule_p}
p(C)=\E\bigg[\Big(\sum_{j\geq0}\frac{b_j}{\mathcal{M}_j}\Big)^{-1}\bigg],
\qquad\text{with the convention }1/\infty=0,
\end{equation}
where
\begin{equation}\label{formule_p_bis}
\mathcal{M}_j:=\Big(\prod_{k=1}^{j}\Theta^{(2)}_k\Big)e^{\int_0^{T_{j+1}}\left(\lambda_2-\mu_2-\beta_{21}Z^{(1)}_s\right)ds},
\qquad
b_j:=\lambda_2\int_{T_j}^{T_{j+1}}e^{\int_{s}^{T_{j+1}}\left(\lambda_2-\mu_2-\beta_{21}Z^{(1)}_u\right)du}ds .
\end{equation}
\end{prop}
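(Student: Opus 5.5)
The strategy is the classical three-phase invasion argument of \cite{champagnat2006microscopic}, with the deterministic resident equilibrium replaced by the random resident trajectory $Z^{(1)}$. While $\Gamma^{(2,K)}$ stays below $\lceil \eps K\rceil$, the type-$2$ individuals have a negligible effect on the resident. By Proposition \ref{prop_cvglongue}, which uses \eqref{H3}, the resident $Z^{(1,K)}$ then stays within $K^{(3\eps-1)/2}$ of the limit $Z^{(1)}$ (started at $C$, with $Z^{(2)}\equiv 0$) on the whole window $[0,(\ln K)^{\alpha-\eps}]$, which contains times of order $\ln K$.

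\textbf{Coupling.} On this event I will sandwich $\Gamma^{(2,K)}$ between two branching processes in the varying environment generated by $Z^{(1)}$ and the catastrophes. Each individual has birth rate $\lambda_2$ and death rate $\mu_2+\beta_{21}Z^{(1)}_t\pm\delta$, with $\delta=\beta_{21}K^{(3\eps-1)/2}+\beta_{22}\eps$. At each catastrophe, every individual is thinned independently with the same $\Theta^{(2)}_k$. The coupling uses the same Poisson measures $N_2^\pm$ and $\bar N$ from \eqref{strongSDE}, so that both bounding processes see exactly the same catastrophes as $\Gamma^{(2,K)}$. Conditionally on the environment $(T_k,\Theta^{(2)}_k,Z^{(1)})$, each bounding process is a branching process in varying environment. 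Its generating functions are linear fractional between catastrophes (birth--death with time-dependent rates), and Bernoulli thinnings preserve this form. Applying Kersting's \cite{Kersting19} explicit formula for the survival probability of linear fractional BPVE, evaluated at the catastrophe epochs (the discrete skeleton), gives $\P(\text{survival}\mid \text{env})=(\sum_j b_j/\mathcal M_j)^{-1}$ with the quantities of \eqref{formule_p_bis}, up to the $\delta$ perturbation. Averaging over the environment yields $p(C)$. As $\eps,\delta\to0$, the survival probabilities of the two bounding processes converge to $p(C)$ by dominated convergence, since the summands are monotone and continuous in $\delta$.

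\textbf{Growth rate and timing.} The key identification is that $\frac1t\ln \mathcal M_{N_t}\to \bar a_2-\beta_{21}\bar a_1/\beta_{11}=\mathfrak f_{21}$ almost surely. This follows from the law of large numbers for the Lévy process $K^{(2)}$ and from Proposition \ref{ergofaible}(1) applied to the monomorphic resident, $\frac1t\int_0^tZ^{(1)}_s\,ds\to\bar a_1/\beta_{11}$ (the second moment hypothesis can be avoided in the monomorphic case via \eqref{Z_explicite}).
\begin{itemize}
\item If $\mathfrak f_{21}<0$, then $\sum_j b_j/\mathcal M_j=\infty$ a.s., so $p=0$. Extinction moreover occurs in time $O(1)$ in probability, because the mean $\E[\Gamma^{(2)}_t\mid \text{env}]\le \mathcal M$ decays exponentially. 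This gives \eqref{prop_inv_debut}.
\item If $\mathfrak f_{21}>0$, the series converges. On survival, Kersting's theory gives $\Gamma^{(2)}_t/\mathcal M_{N_t}\to W>0$. Hence reaching level $\eps K$ takes time $\ln K/\mathfrak f_{21}(1+o(1))$, while extinction (on non-survival) happens in time $O(1)$. This gives \eqref{prop_inv_intermediaire}, using that $(\ln K)^{\alpha-\eps}\gg \ln K$.
\end{itemize}

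\textbf{Final phase.} To pass from $\eps K$ to $zK$ with $z<\bar z_2$, I will use Lemma \ref{lem_Kurtz} started from $(Z^{(1,K)},\eps)$. By Proposition \ref{3cas} (case \eqref{coex} or $\EXCLref{2}$), the limit $Z^{(2)}$ reaches level $z$ within a time $T_\eps$ that is finite with probability close to one. The reason is that $\liminf Z^{(2)}$ cannot stay below $z<\bar z_2$, since the time averages converge to $\bar z_2$ by Proposition \ref{ergofaible}. Moreover, $Z^{(2)}$ does not return to $0$ in finite time. Since $T_\eps=O(\ln(1/\eps))=o(\ln K)$, \eqref{prop_inv_final} follows.

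\textbf{Main obstacle.} The hardest step will be the uniform control of the coupling errors. The resident error from Proposition \ref{prop_cvglongue} must be translated into a perturbation of the environment, and the survival probability and growth rate must be shown to be continuous in $\delta$ for random environments. I will try to do this through uniform integrability of $\sum_j b_j/\mathcal M_j$ under an exponential tilt by $\delta t$.
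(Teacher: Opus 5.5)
Your overall architecture matches the paper's. You sandwich $\Gamma^{(2,K)}$ between branching processes in the environment generated by the resident and the catastrophes, read the conditional survival probability off Kersting's linear-fractional formula on the catastrophe skeleton, let the perturbation vanish, and finish with Lemma~\ref{lem_Kurtz} and Proposition~\ref{ergofaible}.

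\textbf{The gap is in the coupling.} Proposition~\ref{prop_cvglongue} is only available when $\Gamma^{(2,K)}\equiv0$. Before $\widetilde T^K_\eps$ the mutants are \emph{not} negligible for the resident: every type-$1$ individual carries the extra death rate $\beta_{12}\Gamma^{(2,K)}_t/K$, which can be as large as $\beta_{12}\eps$. This can push $Z^{(1,K)}$ below $Z^{(1)}$ by an amount of order $\eps$ rather than $K^{(3\eps-1)/2}$; at the catastrophe-free equilibrium it is already $\beta_{12}\eps/\beta_{11}$. It therefore lowers the type-$2$ death rate by about $\beta_{21}\beta_{12}\eps/\beta_{11}$. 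Your $\delta=\beta_{21}K^{(3\eps-1)/2}+\beta_{22}\eps$ contains no $\beta_{12}$ term. So your favourable bounding process, with death rate $\mu_2+\beta_{21}Z^{(1)}_t-\delta$, need not dominate $\Gamma^{(2,K)}$ once $\beta_{21}\beta_{12}/\beta_{11}$ is large compared with $\beta_{22}$.

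Enlarging $\delta$ to a fixed multiple of $\eps$ does not give an a priori valid sandwich either. While the resident's extra death rate is of order $\eps$, the deficit after a catastrophe-induced trough at level $\zeta$ is of order $\eps\ln(1/\zeta)$, since the recovery lasts about $\ln(1/\zeta)/a_1$. Such troughs become arbitrarily deep on windows of length $\ln K$, whereas $\eps$ stays fixed as $K\to\infty$. Your unfavourable side is fine: there the resident is dominated by the monomorphic process without type-$2$ competition, to which Proposition~\ref{prop_cvglongue} does apply.

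\textbf{The missing idea} is to perturb the environment itself on the favourable side. Bound the resident from below by the monomorphic process $Z^{(1,a_1-\beta_{12}\eps)}$ with reduced growth rate, each comparison process being close to its own jump limit by Proposition~\ref{prop_cvglongue}, as in Lemma~\ref{lem_cvglongue}. The favourable bounding process then has death rate $\mu_2+\beta_{21}(Z^{(1,a_1-\beta_{12}\eps)}_t-\eps)$ and fitness $\mathfrak f_{21}+\beta_{21}(1+\beta_{12}/\beta_{11})\eps$.

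Its survival probability is no longer an exponential tilt of the unperturbed one, so your continuity step must be split:
\begin{itemize}
\item On the favourable side, $S_\eps:=\sum_j b^\eps_j/\mathcal M^\eps_j$ is non-increasing in $\eps$, and each summand converges as $\eps\downarrow0$ because the environments converge on compact time intervals. Hence $S_\eps\le S_0$ and, comparing partial sums, $\liminf_{\eps\downarrow0}S_\eps\ge S_0$; no domination is needed.
\item On the unfavourable side the perturbation really is a constant added to the death rate, and your tilt works. $\mathcal M_j$ is multiplied by $e^{-\delta T_{j+1}}$ and $b_j$ decreases. The tilted series $\sum_j e^{\delta T_{j+1}}b_j/\mathcal M_j$ is a.s.\ finite for small $\delta$, because the tilted fitness stays positive and $t_j/T_j\to0$ a.s. Since $1/S\in[0,1]$, pathwise convergence of $S$ suffices and no uniform integrability is involved.
\end{itemize}

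A smaller point is also left open. Your growth estimate is read on the catastrophe skeleton, while $\widetilde T^K_\eps$ is a continuous-time hitting time, so you still have to check that the two differ by $o(\ln K)$. The paper does this by showing that going from $\lceil\eps K/2\rceil$ to a catastrophe time with $\Gamma^{(2,K)}\ge\eps K$ takes $o(\ln K)$.
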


Our analysis will be based on the coupling of $\Gamma^{(2,K)}$ with a family of discrete-time branching processes $((X^{\eps}_n, n\in \N),\eps\in(-\eps_0,\eps_0))$ for some $\eps_0>0$ in a randomly varying environment, in such a way that, with high probability, when $K$ is large,
$$ X^{-\eps}_n \leq \Gamma^{(2,K)}_{T_n-} \leq X^{\eps}_n, \quad \forall n \leq N^K_{\eps},\quad \forall \eps \in (0,\eps_0),$$
where $N^K_{\eps}$ is a large stopping time that will be defined later.
The strategy consists then in proving that $X^{-\eps}$ and $X^{\eps}$ are arbitrarily close to each other when $\eps$ is very small, and in quantifying their survival probability.
The main idea is the following. When the number of type-$2$ individuals is small compared to $K$, the competition exerted by type-$2$ individuals on both type-$1$ and type-$2$ individuals can be regarded as negligible. Hence, only the competition exerted by type-$1$ individuals plays a significant role. As a consequence, as long as the system remains in a phase where $\Gamma^{(2,K)} \ll K$, the process $\Gamma^{(1,K)}$ can be effectively treated as decoupled from the rest of the population.
From the point of view of the type-$2$ invading individuals, the relevant quantities are only the number of type-$1$ individuals and the catastrophes that occur. 
These elements will constitute the random environment (see Section \ref{subsec_introofbpre} for details).

We now state the consequences of Proposition \ref{possible_invasion} for the finite population process $\Gamma^{(K)}$. The first result identifies the invasion event: its probability converges to $p(C)$, and it can equivalently be read on the hit time of a macroscopic level by the mutant population, or on the maximal size this population reaches before a well-chosen time.

\begin{coro}
\label{coro_invasion}
Take $C>0$ and $\Gamma^{(K)}$ the solution to \eqref{strongSDE} with $\Gamma^{(K)}_0=(\lfloor CK\rfloor,1)$ and $Z^{(K)}=\Gamma^{(K)}/K$. Let $z,z'\in(0,\overline{z}_2)$, where $\overline{z}_2$ is defined in \eqref{eq:z2bar}, and let $(t_K,K\geq1)$ be a sequence of times satisfying $\liminf_K t_K/\ln K>\mathfrak{f}_{21}^{-1}$. Then the two events $$E^K_1:=\big\{\widetilde{T}^K_z<\infty\big\} \qquad\text{and}\qquad
E^K_2:=\Big\{\sup_{s\leq t_K}Z^{(2,K)}_s\geq z'\Big\}$$
satisfy $\lim_{K\to\infty}\P\big(E^K_1\,\triangle\,E^K_2\big)=0$, and their probability converges to $p(C)$, defined in \eqref{formule_p}. In particular
$p(C)>0$ if $\mathfrak{f}_{21}>0$ and $p(C)=0$ if $\mathfrak{f}_{21}<0$.
\end{coro}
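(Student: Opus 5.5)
The corollary follows from Proposition~\ref{possible_invasion} by comparing each event $E^K_i$ with the event $\{\widetilde T^K_0>\widetilde T^K_{z}\}$ (or with the $\eps$-version), and then reading off the probability of the latter as the parameter $p(C)$ of the Bernoulli variable $B$.

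\emph{Case $\mathfrak f_{21}<0$.} By \eqref{prop_inv_debut} with $\mathfrak a:=z\wedge z'$, the mutant population dies out before reaching $\lceil K\mathfrak a\rceil$ with probability tending to one. On that event, $\widetilde T^K_z=\infty$ and $\sup_s Z^{(2,K)}_s<z'$, since $0$ is absorbing for $\Gamma^{(2,K)}$. Hence $\P(E^K_1)$ and $\P(E^K_2)$ both tend to $0$, and so does $\P(E^K_1\triangle E^K_2)$. It then only remains to check that $p(C)=0$ in this regime. By \eqref{formule_p}, this reduces to showing $\sum_j b_j/\mathcal M_j=\infty$ a.s. Since $b_j$ is bounded below by a positive quantity on inter-catastrophe intervals of length at least $1$, it suffices that $\mathcal M_j\to0$. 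This follows because $\frac1{T_{j+1}}\ln\mathcal M_j\to a_2-rq_2-\beta_{21}\overline a_1/\beta_{11}=\mathfrak f_{21}<0$, by the law of large numbers for the catastrophes and the ergodic average of Proposition~\ref{ergofaible} in the monomorphic case.

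\emph{Case $\mathfrak f_{21}>0$.}

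\textbf{Step 1 ($E^K_1$).} By \eqref{prop_inv_final}, $\P(\widetilde T^K_0>\widetilde T^K_z)\to p(C)$. Since $0$ is absorbing, $\{\widetilde T^K_z<\infty\}=\{\widetilde T^K_0>\widetilde T^K_z\}$. Hence $\P(E^K_1)\to p(C)$.

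\textbf{Step 2 (comparing $E^K_1$ and $E^K_2$).} Without loss of generality take $z'\le z$ (otherwise swap the roles of $z$ and $z'$). Fix $\delta>0$ small so that $(1+\delta)\mathfrak f_{21}^{-1}<\liminf t_K/\ln K$. By \eqref{prop_inv_final}, with probability $p(C)+o(1)$ we have both $\widetilde T^K_0>\widetilde T^K_z$ and $\widetilde T^K_z\le (1+\delta)\mathfrak f_{21}^{-1}\ln K\le t_K$; the total variation convergence gives the concentration of the time around $\mathfrak f_{21}^{-1}\ln K$. On this event, $\sup_{s\le t_K}Z^{(2,K)}_s\ge z\ge z'$, so $E^K_2$ holds. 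Conversely, apply \eqref{prop_inv_final} with $z'$ in place of $z$. With probability $1-p(C)+o(1)$ we have $\widetilde T^K_0<\widetilde T^K_{z'}$, and on this event $E^K_2$ fails. Moreover $E^K_1\subset\{\widetilde T^K_{z'}<\widetilde T^K_0\}$, because reaching level $z$ requires first reaching level $z'$. Combining the two directions, $E^K_1\cap E^K_2$ has probability $p(C)+o(1)$, while $(E^K_1)^c\cap(E^K_2)^c$ contains $\{\widetilde T^K_0<\widetilde T^K_{z'}\}$, of probability $1-p(C)+o(1)$. Therefore $\P(E^K_1\triangle E^K_2)\to0$.

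\textbf{Step 3 ($p(C)>0$).} Positivity is asserted in Proposition~\ref{possible_invasion}, namely $p\in(0,1)$. Directly, it also follows from \eqref{formule_p}: $\mathcal M_j$ grows like $e^{\mathfrak f_{21}T_{j+1}(1+o(1))}$, which makes $\sum_j b_j/\mathcal M_j$ finite a.s.

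The main subtlety is handling levels $z$ and $z'$ with different values. This is resolved above by monotonicity of the hitting times in the level together with absorption at $0$. A second point requiring care is the sign computation for $p(C)$ when $\mathfrak f_{21}<0$, which relies on the ergodic average of the resident population.
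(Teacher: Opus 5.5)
Your proposal is correct and follows essentially the paper's route: you identify $E^K_1$ and $E^K_2$ with hitting-time events using absorption at $0$ and unit upward jumps, read off the limits from Proposition~\ref{possible_invasion}, and sandwich using the monotonicity of $y\mapsto\widetilde T^K_y$. Your ``swap the roles'' reduction is not a literal symmetry of the statement, but it is harmless: your two directions work verbatim with $z\vee z'$ in the first and $z\wedge z'$ in the second, which is exactly the paper's chain $\{\widetilde T^K_{z\vee z'}\le t_K\}\subseteq E^K_1\cap E^K_2\subseteq E^K_1\cup E^K_2\subseteq\{\widetilde T^K_{z\wedge z'}<\infty\}$, and your direct check of $p(C)=0$ from \eqref{formule_p} when $\mathfrak f_{21}<0$ usefully fills in a point the paper only asserts.
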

We then state results on the mean quantity of individuals in the spirit of Proposition \ref{ergofaible}.
\begin{theo}
\label{theorem2}
Take $C$, $\Gamma^{(K)}$, $Z^{(K)}$, and $\overline{z}_2$ as in Corollary~\ref{coro_invasion}, let $(Z_t,t\geq0)$ be the solution to \eqref{eqdep} with $Z_0=(C,0)$, and let $z\in(0,\overline{z}_2)$. Assume \eqref{H3}.
\begin{enumerate}
\item Assume $\mathfrak{f}_{21}<0$. Then for every $t>0$,
\begin{equation}\label{th2_sub}
\frac{1}{t}\int_0^tZ^{(K)}_s\,ds
\ \xrightarrow[K\to\infty]{\ \P\ }\ \frac{1}{t}\int_0^tZ_s\,ds ,
\qquad\text{and}\qquad
\frac{1}{t}\int_0^tZ_s\,ds\ \xrightarrow[t\to\infty]{\mathrm{a.s.}}\
\Big(\frac{\overline{a}_1}{\beta_{11}},0\Big).
\end{equation}
\item Assume $\mathfrak{f}_{21}>0$ and let
$\P^{K}_z(\,\cdot\,):=\P\big(\,\cdot\mid\widetilde{T}^{K}_z<\infty\big)$, which is
well defined for $K$ sufficiently large by Corollary~\ref{coro_invasion}. Set
$$\mathfrak{z}:=\begin{cases}
(z^{(1)}_\infty,z^{(2)}_\infty) & \text{if }\mathfrak{f}_{12}>0,\\[2pt]
(0,\overline{a}_2/\beta_{22}) & \text{if }\mathfrak{f}_{12}<0.
\end{cases}$$
Then, for every $\eta>0$,
\begin{equation}\label{th2_super}
\lim_{t\to\infty}\ \limsup_{K\to\infty}\
\P^{K}_z\Big(\Big\Vert\frac{1}{t}\int_0^tZ^{(K)}_{\widetilde{T}^{K}_z+s}\,ds
-\mathfrak{z}\Big\Vert_2>\eta\Big)=0 .
\end{equation}
\end{enumerate}
\end{theo}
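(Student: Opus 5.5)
The plan is to split according to the sign of $\mathfrak{f}_{21}$, and in both cases reduce to Lemma~\ref{lem_Kurtz}, Proposition~\ref{ergofaible}, Proposition~\ref{possible_invasion} and Corollary~\ref{coro_invasion}.

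\emph{Case $\mathfrak{f}_{21}<0$.}

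The first step is to check that $Z^{(K)}_0=(\lfloor CK\rfloor/K,1/K)$ converges to $(C,0)$. Lemma~\ref{lem_Kurtz} on $[0,t]$ then gives $\sup_{s\le t}\|Z^{(K)}_s-Z_s\|\to0$ in probability, where $Z$ solves \eqref{eqdep} with $Z_0=(C,0)$. Integrating over $[0,t]$ and dividing by $t$ yields the first convergence in \eqref{th2_sub}.

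Since $Z_0^{(2)}=0$, we have $Z^{(2)}\equiv0$ and $Z^{(1)}$ is the monomorphic process of Proposition~\ref{prop_monomorph}. The second convergence follows from the explicit formula \eqref{Z_explicite}. Writing $1/Z^{(1)}_t=e^{-K^{(1)}_t}/C+\beta_{11}\int_0^te^{K^{(1)}_s-K^{(1)}_t}ds$ and using \eqref{Ito} with $Z^{(2)}=0$, we get $\ln Z^{(1)}_t-K^{(1)}_t=\ln C-\beta_{11}\int_0^tZ^{(1)}_sds$. The left side equals $O(1)+o(t)$: indeed $\ln Z^{(1)}_t/t\to0$ a.s., because $Z^{(1)}$ is bounded above and, by stationarity, does not decay exponentially, using Remark~\ref{rem_lossofmemory}. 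Moreover $K^{(1)}_t/t\to\bar a_1$ a.s. by the LLN for Lévy processes. Dividing by $t$ gives $\frac1t\int_0^tZ^{(1)}_s\,ds\to\bar a_1/\beta_{11}$. Alternatively, this is case (1) of Proposition~\ref{ergofaible} restricted to one type; the same Itô argument only needs integrability of $\ln\Theta^{(1)}$, which holds by Remark~\ref{rem_moments}.

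\emph{Case $\mathfrak{f}_{21}>0$.}

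Set $\tau=\widetilde T^K_z$. On $\{\tau<\infty\}$, the strong Markov property lets us restart the process at $\tau$ from $(Z^{(1,K)}_\tau,Z^{(2,K)}_\tau)$ with $Z^{(2,K)}_\tau=\lceil Kz\rceil/K\to z>0$. The key point is a tightness statement for $Z^{(1,K)}_\tau$ under $\P^K_z$: with high probability it lies in a compact $[\delta,M]\subset(0,\infty)$. The upper bound holds because $Z^{(1,K)}$ is dominated, up to $o(1)$ on time scales $\ln K$, by a logistic process bounded by $\max(C,a_1/\beta_{11})+1$. The lower bound comes from Proposition~\ref{prop_cvglongue} applied on $[0,\tau]$: since $\tau=O(\ln K)$ by \eqref{prop_inv_final}, the resident stays close to the monomorphic $Z^{(1)}$, up to the short final phase where $Z^{(2,K)}$ climbs from $\eps$ to $z$ in time $O(1)$. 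Moreover $\inf_{s\le T}Z^{(1)}_s>0$ a.s. by \eqref{Z_explicite}, and on time $\ln K$ the stationarity of Proposition~\ref{prop_monomorph} gives tightness away from $0$.

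Given this tightness, choose a subsequence along which $Z^{(K)}_\tau$ converges in law to some $\zeta$ supported in $[\delta,M]\times\{z\}$. Applying Lemma~\ref{lem_Kurtz} on $[0,t]$ after $\tau$, conditionally on $\mathcal F_\tau$, with fresh catastrophes by the strong Markov property and independence of the Poisson process, reduces the claim to the following: the limit $\frac1t\int_0^tZ_s\,ds$ started from $\zeta$ satisfies $\lim_t\P(\|\cdot-\mathfrak z\|_2>\eta)=0$, uniformly over initial laws supported in the compact set. For fixed initial conditions in $(\R_+^*)^2$, Proposition~\ref{ergofaible} gives a.s. convergence to $\mathfrak z$. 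Under \eqref{coex} the limit is $(z^{(1)}_\infty,z^{(2)}_\infty)$. Under $\mathfrak f_{21}>0>\mathfrak f_{12}$, i.e.\ $\EXCLref{2}$, Proposition~\ref{3cas} gives $Z^{(1)}\to0$, so the limit is $(0,\bar a_2/\beta_{22})$. This uses the moment assumption of Proposition~\ref{ergofaible}; if it is missing, I would rerun the Itô/LLN argument above, which needs only first moments.

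The uniformity over initial conditions is then obtained by dominated convergence, integrating the a.s. statement against the law of $\zeta$. Since every subsequence has a further subsequence with the same limit, the full $\limsup_K$ statement follows.

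\emph{Main obstacle.} The hardest step is the tightness of $Z^{(1,K)}_{\widetilde T^K_z}$ away from $0$ under the conditioning. The conditioning has probability $p(C)>0$ by Corollary~\ref{coro_invasion}, so it only inflates probabilities by a bounded factor, which handles the conditioning itself. The delicate part is controlling the resident up to time $\widetilde T^K_\eps\approx\ln K/\mathfrak f_{21}$, which requires Proposition~\ref{prop_cvglongue} and hence \eqref{H3}, followed by the $O(1)$ phase from $\eps$ to $z$, which requires Lemma~\ref{lem_Kurtz} and continuity in $\eps$ of the limiting flow.
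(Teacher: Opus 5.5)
Your overall route is the paper's: Lemma~\ref{lem_Kurtz} and \eqref{eq:cesaro} for $\mathfrak f_{21}<0$, and for $\mathfrak f_{21}>0$ tightness of $Z^{(1,K)}_{\widetilde T^K_z}$ in $(0,\infty)$, the strong Markov property at $\widetilde T^K_z$, Remark~\ref{rem_Kurtz_random}, and then Proposition~\ref{ergofaible}. The gap is in the last step.

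\textbf{The gap.} You correctly say that you need the Ces\`aro convergence uniformly over initial conditions in $[\delta,M]\times\{z\}$. But dominated convergence against the law of a subsequential limit $\zeta$ only gives $\lim_t\P_\zeta(\cdot)=0$ for that one fixed law.
\begin{itemize}
\item The statement is $\lim_{t}\limsup_K$. The subsequence realizing $\limsup_K$ at a given $t$, and hence the limit law $\zeta=\zeta^{(t)}$, may change with $t$.
\item The device ``every subsequence has a further subsequence with the same limit'' does not commute with $\lim_t\limsup_K$.
\item What you actually need is
\[
\sup_{x\in[\delta,M]}\P_{(x,z)}\Big(\Big\|\tfrac1t\int_0^tZ_s\,ds-\mathfrak z\Big\|_2>\eta\Big)\xrightarrow[t\to\infty]{}0 .
\]
The alternative would be convergence in law of $Z^{(1,K)}_{\widetilde T^K_z}$ under $\P^K_z$ along the full sequence, which you do not have.
\item Pointwise a.s.\ convergence from Proposition~\ref{ergofaible} does not give this uniform bound. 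Nothing in your argument controls how the speed of the Ces\`aro convergence depends on the initial condition.
\end{itemize}
Passing $\limsup_K$ inside $\int\cdot\,d\nu_K$ along a weakly convergent subsequence would also need a semicontinuity argument in the initial condition, which you do not supply.

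\textbf{The missing idea is monotonicity.}
\begin{itemize}
\item Define $\zeta\preceq\zeta'$ iff $\zeta^{(1)}\le\zeta'^{(1)}$ and $\zeta^{(2)}\ge\zeta'^{(2)}$. The drift of \eqref{eqdep} is quasi-monotone for this order because the system is competitive, and the multiplicative jumps preserve it.
\item Hence, on $\{Z^{(1,K)}_{\widetilde T^K_z}\in[\varrho,M]\}$, the restarted limit process is squeezed pathwise, coordinate by coordinate, between the solutions started at $(\varrho,z)$ and $(M,z)$, driven by the same catastrophes.
\item Both extreme Ces\`aro averages converge a.s.\ to $\mathfrak z$. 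This gives the $K$-independent bound
\[
\gamma+\sum_{\zeta\in\{(\varrho,z),(M,z)\}}\P\Big(\Big\|\tfrac1t\int_0^tZ^{\zeta}_s\,ds-\mathfrak z\Big\|_2>\eta\Big),
\]
which tends to $\gamma$ as $t\to\infty$.
\end{itemize}
This two-point sandwich supplies exactly the uniformity you are missing. It also removes the need for any weak-limit extraction or semicontinuity argument.
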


In words, once the type $2$ has reached the macroscopic level $z$ - which happens with asymptotic probability $p>0$ - the occupation measure of the rescaled process stabilises around $\mathfrak{z}$, the long-time limit of the deterministic system with jumps.

Such a theorem raises the question of the behaviour of the process on the time scale $\ln K$, the typical extinction time of a subcritical process started at order $K$ \cite{champagnat2006microscopic}. We leave this question, which requires further work on stationary distributions, for future research.

\section{Proofs}

This section is dedicated to the proofs of the different previous results.

\subsection{Convergence of the discrete population model} \label{sec_proof_conv}

We  begin with the proof of Lemma~\ref{lem_Kurtz}. It follows from successive extensions of Theorem 2.1 p.456 in \cite{ethier2009markov} between catastrophes, and the gluing of the different time intervals. Let $T_0=0$ and for $i \geq 1$ let $(T_i, \Theta_i^{(1)},\Theta_i^{(2)})$ \label{def_triplet} in $\R_+ \times (0,1)^2$ be the triplet constituted of the occurrence time of the $i$th catastrophe, and the associated survival probabilities for type $1$ and type $2$ individuals. Let
$$\mathcal{G}:=\sigma\big((T_k,\Theta^{(1)}_k,\Theta^{(2)}_k),\,k\geq1\big)$$
be the $\sigma$-field generated by the catastrophes.
\begin{proof}[Proof of Lemma \ref{lem_Kurtz}]
    All the conditional expectations below are taken with respect to $\mathcal{G}$: notice that, conditionally on $\mathcal{G}$, $Z^{(K)}$ is still a time-inhomogeneous birth-and-death process between two catastrophes, and that $Z$ is $\mathcal{G}$-measurable.
Let $T,\eps>0$. We aim at bounding the probability
$$\P\Big(\Big\{\sup_{s\leq T}\big\|Z^{(K)}_s-Z_s\big\|>\eps\Big\}\cap\mathfrak{A}_k^{(K)}\Big) \qquad\text{where, for }k\geq0,$$
$$\mathfrak{A}_{k}^{(K)}:=\Big\{T_{k+1}>T,\text{ and }\sup_{t\leq T}\big\{Z^{(i,K)}_t\big\} \leq\frac{2ka_i}{\beta_{ii}},\ i\in\{1,2\}\Big\}.$$
In particular $\mathfrak{A}_{k}^{(K)}\subset\mathfrak{A}_{k+1}^{(K)}$. Moreover,
\begin{equation*}
1-\P\left(\mathfrak{A}_k^{(K)}\right) \leq \P(T_{k+1}\leq T) + \sum_{i=1}^2\P\left(\sup_{t\leq T} Z_t^{(i,K)}>\frac{2ka_i}{\beta_{ii}}\right).
\end{equation*}
The first term goes to $0$ uniformly in $K$ as the number of catastrophe before $T$ is Poisson distributed with parameter $rT$. For the second term, by classical large deviations results (see \textit{e.g.} the equation following Eq. (24) in \cite{champagnat2006microscopic} and Lemma 2.2 in \cite[Chapter 4]{FW84}) using that for $i\in\left\{1,2\right\}$, $Z^{(i,K)}$ is dominated by the birth-and-death process without catastrophes, we have
\begin{equation}\label{eq:gde_dev}
    \lim_{k\to\infty}\sup_{K}\P\left(\sup_{t\leq T}Z_t^{(i,K)}>\frac{2ka_i}{\beta_{ii}}\right)=0,
\end{equation}
so that $\lim_{k\to\infty}\inf_K\P\left(\mathfrak{A}_{k}^{(K)}\right)=1$. It is therefore enough to prove that for each fixed $k$, the above probability goes to $0$ as $K$ goes to infinity, and to let
$k$ go to infinity afterwards.
    
By symmetry, we treat only the first coordinate. If $z_0^{(1)}=0$, then $Z^{(1)}=0$ and $\lim_{K\to\infty}\sup_{0\leq t\leq T}Z^{(1,K)}_t=0$ by a direct application of Theorem 2.1 p.456 in \cite{ethier2009markov}, as $Z^{(1,K)}$ is dominated by the process without catastrophes. Therefore, we now assume $z_0^{(1)}>0$. To simplify the proof, we set $Z^{(2,K)}=0$, as the computations made in that case can be adapted straightforwardly to the case $Z^{(2,K)}\neq0$. We also write $Z^{(K)}$ instead of $Z^{(1,K)}$. On the time interval $[0,T_1)$ it has the same law as the strong solution to
\begin{align}\label{eq:ZtK-poisson}
Z^{(K)}_t =& Z^{(K)}_0 + \frac{1}{K} \Big[ \tilde{Y}_1 \Big( K \int_0^t \lambda_1 Z^{(K)}_s ds \Big)-\tilde{Y}_{-1} \Big( K \int_0^t (\mu_1+\beta_{11} Z^{(K)}_s) Z^{(K)}_s ds \Big) \Big]\\
\nonumber& + \int_0^t (a_1-\beta_{11} Z^{(K)}_s) Z^{(K)}_s ds
\end{align}
where $\tilde{Y}_{\pm 1}$ are two independent rate $1$ compensated Poisson random measures and similarly on each interval $[T_i,T_{i+1})$, started at $Z^{(K)}_{T_i}$, by the strong Markov property. Adding \eqref{eqdep} yields
\begin{align*}
Z^{(K)}_t-Z_t &= Z^{(K)}_0-Z_0 + \int_0^t (a_1-\beta_{11} Z^{(K)}_s) Z^{(K)}_s ds-\int_0^t (a_1-\beta_{11} Z_s) Z_s ds\\
&+ \frac{1}{K} \Big[ \tilde{Y}_1 \Big( K \int_0^t \lambda_1 Z^{(K)}_s ds \Big)-\tilde{Y}_{-1} \Big( K \int_0^t (\mu_1+\beta_{11} Z^{(K)}_s) Z^{(K)}_s ds \Big) \Big].
\end{align*}
As $Z^{(K)}$ and $Z$ stay in the compact $[0,2ka_1/\beta_{11}\vee Z_0]$ during the time interval $[0,T]$ on the event $\mathfrak{A}_{k}^{(K)}$, there exists a finite constant $\mathcal{M}_k$ such that on the event $\mathfrak{A}_{k}^{(K)}$:
\begin{align*}
  \big| Z^{(K)}_t- Z_t \big| \leq 
  %&  \big| Z^{(K)}_0 - Z_0 \big|  + \frac{1}{K} \Big[ \left|\tilde{Y}_1 \Big(K \int_0^t \lambda_1 Z^{(K)}_s ds\Big)-\tilde{Y}_{-1} \Big( K \int_0^t (\mu_1+\beta_{11} Z^{(K)}_s) Z^{(K)}_s ds\Big) \right|\Big]\\&+  \mathcal{M}_k\int_0^t  \big| Z^{(K)}_s- Z_s \big|ds=:
  \big| Z^{(K)}_0 - Z_0 \big|  + \frac{|W(t)|}{K} +  \mathcal{M}_k\int_0^t  \big| Z^{(K)}_s- Z_s \big|ds,
\end{align*}
with $W(t):=\tilde{Y}_1 \Big(K \int_0^t \lambda_1 Z^{(K)}_s ds\Big)-\tilde{Y}_{-1} \Big( K \int_0^t (\mu_1+\beta_{11} Z^{(K)}_s) Z^{(K)}_s ds\Big)$.
Applying Grönwall's inequality, we deduce that, under the same condition, for $t<T_1$,
\begin{equation}
\begin{aligned}
\label{lemma_conv_Gronw}
    \big| Z^{(K)}_t- Z_t \big| \leq  e^{\mathcal{M}_kt}\Big( \big| Z^{(K)}_0 - Z_0 |  &+ \frac{1}{K} \sup_{t\leq T_1}\big|W(t)\big|\Big).
    \end{aligned}
    \end{equation}
    Note that this inequality can be obtained with the same reasoning in the case $Z^{(2,K)}\neq 0$ by adding $Z^{(2,K)}$ in the argument of $\tilde{Y}_{-1}$ in \eqref{eq:ZtK-poisson} and afterwards.
On the event $\mathfrak{A}_k^{(K)}$,
$$ \sup_{s\leq T_1} \lambda_1 Z^{(K)}_s \leq \lambda_1 \frac{2k a_1}{\beta_{11}} \quad \text{and} \quad  \sup_{s\leq T_1}  (\mu_1+\beta_{11} Z^{(K)}_s) Z^{(K)}_s \leq  (\mu_1+\frac{2k a_1}{\beta_{11}}) \frac{2k a_1}{\beta_{11}}.  $$
By Doob's maximal inequality, we obtain that for $C_k=\frac{2ka_1}{\beta_{11}}(\lambda_1+\mu_1+\frac{2ka_1}{\beta_{11}})$, $B,\eps>0$
\begin{align} \label{Doob_Wt}
    \P \Big(  \sup_{t\leq T_1}\Big| \frac{W(t)}{K}\Big|  >B \eps \Big) & \leq  \frac{\E\big[ \big| W\big( T_1 \wedge T\big) \big|^2 \big]}{B^2 \eps^2 K^2} \leq \frac{C_k T}{B^2 \eps^2 K}.
\end{align}
This inequality is also satisfied when $Z^{(2,K)}\neq 0$, with an additional term equal to $2ka_2/\beta_{12}$ in the definition of $C_k$. From now on, the proof proceeds exactly the same way in cases $Z^{(2,K)}=0$ or $Z^{(2,K)}\neq0$
Combining this with \eqref{lemma_conv_Gronw} and introducing the event 
$$\mathscr B_i^k (\mathfrak{a},\mathfrak{b}):= \{ |Z^{(K)}_{T_i}- Z_{T_i}|>\mathfrak{a} e^{-\mathcal{M}_k \mathfrak{b} }\} , \quad (\mathfrak{a},\mathfrak{b})\geq 0,$$ 
imply that for any $i\leq k$ and $\eps>0$, taking $B=e^{-\mathcal{M}_k(T_{i+1}-T_i)}/6$ and $\Delta_i=T_{i+1}-T_i$,
\begin{equation}
\label{lemma_conv_derive}
\begin{aligned} &\ind_{\mathfrak{A}_{k}^{(K)}} \P \Big( \sup_{T_i\leq t <T_{i+1}} \big| Z^{(K)}_{t}- Z_{t} \big|  >\frac{\eps}{2}\Big|Z^{(K)}_{T_{i}}, Z_{T_i},\Delta_i\Big) \\ 
\leq & \ind_{\mathfrak{A}_{k}^{(K)}} \Big(\ind_{\mathscr B_i^k(\eps/3,\Delta_i)}+
\frac{36 C_k T e^{2\mathcal{M}_k\Delta_i}}{\eps^2 K}\Big) =: \ind_{\mathfrak{A}_{k}^{(K)}} \Big( \ind_{\mathscr B_i^k(\eps/3,\Delta_i)}+
\frac{\mathcal{D}(k,T)}{\eps^2 K}\Big).
\end{aligned}
\end{equation}
Now we aim at quantifying the extra error due to the approximation of the binomial random variable when a catastrophe occurs. First,
\begin{equation} \label{decomp} \big| Z^{(K)}_{T_{i+1}}- Z_{T_{i+1}} \big| \leq \Theta_{i+1}^{(1)} \big| Z^{(K)}_{T_{i+1}-}- Z_{T_{i+1}-} \big| + \big| Z^{(K)}_{T_{i+1}}- \Theta_{i+1}^{(1)}Z^{(K)}_{T_{i+1}-} \big|. \end{equation}
As a binomial random variable with parameters $n$ and $p$ has mean $np$ and variance $np(1-p)$, we obtain from Chebyshev's inequality, for any $\eps>0$:
\begin{equation}
\label{lemma_conv_sauts}
\begin{aligned}
 &\ind_{\mathfrak{A}_{k}^{(K)}}\P \Big( \big| Z^{(K)}_{T_{i+1}}- Z_{T_{i+1}} \big|  >\eps \Big| Z^{(K)}_{T_{i+1}-},Z_{T_{i+1}-},\Theta_{i+1}^{(1)}\Big)\\ 
&\leq 
 \ind_{\mathfrak{A}_{k}^{(K)}} \left(  \ind_{\{|Z^{(K)}_{T_{i+1}-}- Z_{T_{i+1}-}|>\frac{\eps}{2}\}} + \P \Big(  \big| Z^{(K)}_{T_{i+1}}- \Theta_{i+1}^{(1)}Z^{(K)}_{T_{i+1}-} \big| >\frac{\eps}{2}\Big|Z^{(K)}_{T_{i+1}-},\Theta_{i+1}^{(1)}\Big)\right)\\
    &\leq \ind_{\mathfrak{A}_{k}^{(K)}} \Big(  
    \ind_{\{|Z^{(K)}_{T_{i+1}-}- Z_{T_{i+1}-}|>\frac{\eps}{2}\}}+\frac{4Z^{(K)}_{T_{i+1}-} }{K\eps^2} \Theta_{i+1}^{(1)}(1-\Theta_{i+1}^{(1)})\Big)
\\& \leq\ind_{\mathfrak{A}_{k}^{(K)}} \Big(  
    \ind_{\{|Z^{(K)}_{T_{i+1}-}- Z_{T_{i+1}-}|>\frac{\eps}{2}\}}+\frac{2ka_1}{K\beta_{11}\eps^2}\Big).
\end{aligned}
\end{equation}
Combining \eqref{lemma_conv_derive} and \eqref{lemma_conv_sauts} we get the existence of a finite $\tilde{\mathcal{D}}(k,T)$ such that for $i \leq k$:
\begin{align*}
   \ind_{\mathfrak{A}_{k}^{(K)}} \P \big( \mathscr B_{i+1}^k(\eps,0) \big) & \leq \ind_{\mathfrak{A}_{k}^{(K)}}\Big( \frac{\tilde{\mathcal{D}}(k,T)}{\eps^2 K}  
  + \P \Big( \mathscr B_i^k\Big(\frac{\eps}{3},T_{i+1}-T_i\Big) \Big)  \Big)\\
 & =\ind_{\mathfrak{A}_{k}^{(K)}}\Big( \frac{\tilde{\mathcal{D}}(k,T)}{\eps^2 K}  
  + \P \Big( \mathscr B_i^k\Big(\frac{\eps}{3}e^{-\mathcal{M}_k(T_{i+1}-T_i)},0\Big) \Big)  \Big)
\end{align*}
By iteration, we find that there exists $\mathcal{C}(k,T)>0$ such that
$$  \ind_{\mathfrak{A}_{k}^{(K)}}  \sup_{i \leq k}\P \big( \mathscr B_{i}^k( \mathfrak{a} \eps,0) \big) \leq   \ind_{\mathfrak{A}_{k}^{(K)}} \frac{\mathcal{C}(k,T)}{\eps^2 K}\mathscr{B}_0^k\left(\frac{\eps}{3^{i+1}}e^{-\mathcal{M}_kT_{i+1}},0\right) \xrightarrow[K \to \infty]{} 0.$$
Using again \eqref{lemma_conv_derive} allows us to extend the property to the time interval $[0,T]$ under the event $\mathfrak{A}_{k}^{(K)}$.
This ends the proof of Lemma \ref{lem_Kurtz} in the case $z_0^{(2)}=0$.
\end{proof}

\begin{rem} \label{rem_Kurtz_random}
The proof of Lemma \ref{lem_Kurtz} uses the initial conditions only through the term $\big|Z^{(K)}_0-Z_0\big|$ of \eqref{lemma_conv_Gronw}, and all the other constants involved are uniform over compact subsets of $(\R_+^*)^2$. As a consequence, \eqref{lim_Kurtz} still holds when $Z^{(K)}_0=Z_0$ is random, provided its law is tight in a compact subset of $(\R_+^*)^2$; we shall use it in this form, after an application of the strong Markov property, in the proof of Theorem \ref{theorem2}.
\end{rem}

\begin{proof}[Proof of Proposition \ref{prop_cvglongue}]
Recall the definition of $((T_i, \Theta_i^{(1)},\Theta_i^{(2)}), i \geq 1)$ and $\mathcal{G}$ on page \pageref{def_triplet}. As by assumption $Z_t^{(2,K)}=0$ for all $t\geq 0$, we will write $Z^{(K)}, Z, \Theta_i, K$ instead of $Z^{(1,K)}, Z^{(1)}, \Theta^{(1)}_i, K^{(1)}$, for all $i\geq 1$. For $t \geq 0$, we define
$$ \Delta^{(K)}_t:= \big(Z^{(K)}_t-Z_t\big)^2.$$ 
The structure of the proof is as follows.
\begin{enumerate}
    \item We show that for $i \leq \ln^\alpha K$ the probability that the squared difference $\Delta^{(K)}_{T_i}$ just after catastrophes is larger than some $\mathscr{A}_K>0$ is small.
    \item We control the difference between two successive jumps.
\end{enumerate}
\begin{enumerate}
    \item  \textit{After catastrophes.} Let $\mathscr{A}_K,\mathscr{B}_K>0$ and
$$E_K:=\Big\{\sup_{i\leq\ln^\alpha K}\E\Big[\Delta^{(K)}_{T_i}\big|\mathcal{G}\Big]>\mathscr{B}_K\Big\}\in\mathcal{G}.$$
We have the following inequality:
\begin{equation} \label{eq:separationEK}\P\Big(\sup_{i\leq\ln^\alpha K}\Delta^{(K)}_{T_i}>\mathscr{A}_{K}\Big)
\leq\P\big(E_K\big)
+\E\Big[\ind_{E_K^C}\,\P\Big(\sup_{i\leq\ln^\alpha K}\Delta^{(K)}_{T_i}>\mathscr{A}_{K}\Big|\mathcal{G}\Big)\Big].\end{equation}
We now prove that the probability of $E_K$ is small, for a well-chosen $\mathscr{B}_K$. First, we control what happens when catastrophes occur.
Using \eqref{decomp}, we get that
\begin{align*}  \Delta^{(K)}_{T_1}= \Theta_{1}^2  \Delta^{(K)}_{T_{1}-} +\Big( Z^{(K)}_{T_{1}}- \Theta_{1}Z^{(K)}_{T_{1}-} \Big)^2 +2\Theta_{1}\big( Z^{(K)}_{T_{1}-}- Z_{T_{1}-} \big)\Big( Z^{(K)}_{T_{1}}- \Theta_{1}Z^{(K)}_{T_{1}-} \Big) . 
\end{align*}
Using the properties of binomial random variables as in \eqref{lemma_conv_sauts} leads to
\begin{align} \label{eq:boundwithjump}\E \Big[ \Delta^{(K)}_{T_1} \big| Z^{(K)}_{T_1-},Z_{T_1-},\Theta_{1}\Big] & = \big(\Theta_{1}\big)^2\Big(Z^{(K)}_{T_1-}-Z_{T_1-}\Big)^2+\Theta_{1} (1-\Theta_{1})\frac{Z^{(K)}_{T_1-}}{K}.
\end{align}
Next, we control the difference $\Delta_t^{(K)}$ before the first catastrophe.
For all $t< T_1$, using that $dZ_t/dt = Z_tR_t$, where $R_t:=a_1-\beta_{11}Z_t$,
\begin{align*}
  \frac{d}{dt}\E \big[ \Delta^{(K)}_t\big] =   \frac{d}{dt}\E \big[ \big(Z^{(K)}_t\big)^2 \big] + 2\big(Z_t\big)^2R_t  - 2Z_t\frac{d}{dt}\E \big[ Z^{(K)}_t\big]- 2\E \big[ Z^{(K)}_t\big]Z_tR_t.
\end{align*}
From Itô's formula, if we define $R^{(K)}_t:=a_1-\beta_{11}Z^{(K)}_t$, we obtain that for any $t < T_1$,
$$ \begin{aligned} 
\frac{d}{dt}\E \big[ Z^{(K)}_t \big]
=\E\big[Z^{(K)}_tR^{(K)}_t\big]\leq a_1 \E[Z^{(K)}_t]-\beta_{11}\E[Z^{(K)}_t]^2,
\end{aligned}$$
where we used Jensen's inequality. Then 
\begin{align}\label{eq:bound_l1}
    \sup_{t< T_1}\E \big[ Z^{(K)}_t \big]\leq C_1 := \frac{a_1}{\beta_{11}}\vee\left( \sup_{K}\E[{Z_0^{(K)}}]\right).
\end{align}
In addition, using again Jensen's inequality, we have
\begin{align*}
    \frac{d}{dt}\E \big[ \big(Z^{(K)}_t\big)^2 \big]& = 2\E\big[ \big(Z^{(K)}_t\big)^2R^{(K)}_t\big] + \E\big[ Z_t^{(K)}(2\lambda_1-R_t^{(K)})/K\big]\\
     & \leq  \big(2a_1 + \frac{\beta_{11}}{K}\big)\E\big[ \big(Z^{(K)}_t\big)^2\big] + \frac{\lambda_1 + \mu_1}{K}C_1 -2\beta_{11}\E\big[ \big(Z^{(K)}_t\big)^2\big]^{3/2} .
\end{align*}
Then, $\sup_{t\geq0}\E[ \big(Z^{(K)}_t\big)^2]<\infty$ if $\E[\big(Z^{(K)}_0\big)^2]<\infty$. We deduce that 
\begin{align*}
    \frac{d}{dt}\E \big[ \big(Z^{(K)}_t\big)^2 \big]& \leq 2\E\big[ \big(Z^{(K)}_t\big)^2R^{(K)}_t\big] + \frac{C_2}{K},
\end{align*}
for some $C_2>0$. Combining the previous inequalities, we obtain that for $t<T_1$,
\begin{align*}
 \frac{1}{2} \frac{d}{dt}\E \big[\Delta^{(K)}_t \big]
  \leq  \E\big[ \big(Z^{(K)}_t\big)^2R^{(K)}_t \big]+ Z_t^2 R_t- Z_t\E\big[ Z^{(K)}_tR^{(K)}_t \big] - \E\big[ Z^{(K)}_t \big]Z_t R_t+\frac{C_2}{K}.
\end{align*}
Using $R_t^{(K)} = R_t - \beta_{11}\big( Z_t^{(K)}-Z_t\big)$, we get
\begin{align*}
  \frac{d}{dt}\E \big[ \Delta^{(K)}_t \big] \leq & 2R_t\E\big[ \Delta^{(K)}_t\big ]  - 2\beta_{11}\E\big[Z^{(K)}_t \Delta_t^{(K)}\big]+\frac{C_2}{K}
  \leq  2 R_t\E\big[ \Delta^{(K)}_t\big]+\frac{C_2}{K}.
\end{align*}
Applying Grönwall's lemma, we obtain that
\begin{align*}
 \E \big[\Delta_t^{(K)}\big|\mathcal{G} \big] \leq \Big( \E \big[ \Delta_0^{(K)}\big|\mathcal{G} \big]+ \frac{C_2t}{K} \Big)e^{2\int_0^tR_sds}.
 \end{align*}
Combining this last equality with \eqref{eq:boundwithjump}, we obtain
\begin{align}
\notag
\E \Big[ \Delta_{T_1}^{(K)}\big| T_1,\Theta_{1}\Big] & \leq \Big( \E \big[\Delta^{(K)}_0 \big]+ \frac{C_2T_1}{K} \Big)e^{2(\int_0^{T_1}R_sds+\ln \Theta_{1})}+ \frac{\Theta_{1}(1-\Theta_{1})}{K}\E \Big[Z^{(K)}_{T_1-} \Big]\\
& \leq  \Big( \E \big[ \Delta^{(K)}_0\big]+ \frac{C_2T_1}{K} \Big)\left( \frac{Z_{T_1}}{Z_{0}} \right)^2+ \frac{C_1}{4K},\label{eq:bound_deltawithjump}
\end{align}
where the last inequality comes from \eqref{eq:bound_l1}.
Then, if we define for $i\geq 0$
$$ \mathfrak{u}_i :=  \E \Big[ \Delta^{(K)}_{T_i}\big| ((T_k,\Theta_{k}, 1 \leq k \leq i)\Big]+ \frac{C_2(T_{i+1}-T_i)}{K}, \quad 
\eta_{i} :=  \frac{C_1}{4K} + \frac{C_2(T_{i+1}-T_{i})}{K},
$$
according to \eqref{eq:bound_deltawithjump}, we obtain for $i\geq 1$
$$ \mathfrak{u}_i \leq \left( \frac{Z_{T_i}}{Z_{T_{i-1}}} \right)^2\mathfrak{u}_{i-1}+\eta_{i}, $$
which entails
\begin{align*} \mathfrak{u}_i \ &\leq\ \sum_{l=0}^{i}\eta_{l}\prod_{k=l+1}^{i}\left( \frac{Z_{T_k}}{Z_{T_{k-1}}} \right)^2
= Z_{T_{i}}^2\sum_{l=0}^{i}\frac{\eta_{l}}{Z_{T_{l}}^{2}}
\\ &\leq\ C_3^2\Big((i+1)\frac{C_1}{4K}+ \frac{C_2T_{i+1}}{K}\Big)\Big( \sum_{k=0}^{i} Z_{T_{k}}^{-1} \Big)^2,
\end{align*}
where we used in the last inequality that $Z$ is smaller than a logistic process without negative jumps, upper bounded by $C_3: = a_1/\beta_{11}\vee Z_0$. 
Let $\alpha>1$. 
From the previous computations, we deduce that, for any $i \leq \ln^\alpha K -1$, 
\begin{equation} \label{maj_one_step} \E \Big[ \Delta^{(K)}_{T_i}\big| ((T_k,\Theta_k), 1 \leq k \leq i)\Big]\leq C_3^2\Big(\ln^\alpha K\frac{C_1}{4K} + \frac{C_2T_{\ln^\alpha K}}{K}\Big)\Big( \sum_{k=0}^{\ln^\alpha K-1}Z_{T_{k}}^{-1} \Big)^2. \end{equation}
We will now use Cram\'er's theorem to estimate the possible range of values taken by $T_{\ln^\alpha K}$. The rate function of an exponential variable of parameter $r$ is
$\Lambda^*(x)=rx-1-\ln(rx)$, convex and minimal at $x=1/r$, so that for $\eps>0$ and large $K$,
 the infima on $[0,\ln^{-\eps}K]$ and $[\ln^{\eps}K,\infty)$ are attained at the boundary:
$\Lambda^*(\ln^{\eps}K)\sim r\ln^{\eps}K$ and $\Lambda^*(\ln^{-\eps}K)\sim\eps\ln\ln K$.
Using Cram\'er's theorem we thus obtain, for $K$ large enough,
\begin{align}\label{maj_LlnalphaK}
       \P\big( \big\{ \ln^{\alpha-\eps} K \geq  T_{\ln^\alpha K}\big\} \cup \big\{  T_{\ln^\alpha K} \geq \ln^{\alpha+\eps}K \big\} \big) \leq e^{-\ln^\alpha K}.
   \end{align}
We now establish an estimate on the sum of $Y_k:=Z_{T_k}^{-1}$. 
From \eqref{Z_explicite}, we have for $k \geq 1$,
\begin{align*}
Y_k &= e^{-K_{T_k}}Y_0 + \beta_{11} e^{-K_{T_k}} \int_0^{T_k}e^{K_s}ds\\
&= e^{-(K_{T_k}-K_{T_{k-1}})}\Big(e^{-K_{T_{k-1}}}Y_0 + \beta_{11} e^{-K_{T_{k-1}}} \int_0^{T_{k-1}}e^{K_s}ds\Big) + \beta_{11}e^{-K_{T_k}}\int_{T_{k-1}}^{T_k}e^{K_s}ds\\
&= \frac{e^{-a_1(T_k-T_{k-1})}}{\Theta_k}Y_{k-1} + \frac{\beta_{11}}{a_1}\cdot\frac{1-e^{-a_1(T_k-T_{k-1})}}{\Theta_k}=:A_kY_{k-1}+ B_k,
\end{align*}
where $((A_k,B_k),k \geq 1)$ are identically distributed independent positive random variables.

Let $\Psi(\lambda)=a_1\lambda+r\E\big[1-(\Theta^{(1)})^{-\lambda}\big]$ so that $\E[e^{-\lambda K_t}]= e^{-t\Psi(\lambda)}$. Under Assumption \ref{ass:a_bar_positive}
and if $\E[(\Theta^{(1)})^{-p_0}]<\infty$, the function $\Psi$ is $ \mathcal{C}^{2}$ and strictly concave on
$[0,p_0)$ with $\Psi(0)=0$ and $\Psi'(0^{+})=\bar{a}_1>0$; hence there exists
$p\in(0,\min(p_0,1)]$ such that $\Psi(p)>0$, and we choose $\gamma:=1/p\geq 1$. For this $p$,
$$
\rho:=\E\big[A_1^{p}\big]=\frac{r}{r+a_1p}\E\big[(\Theta^{(1)})^{-p}\big]<1,
\qquad
b:=\E\big[B_1^{\,p}\big]<\infty,
$$
where the inequality $\rho<1$ holds as $\Psi(p)>0$.
Iterating the recursion and using the subadditivity of $u\mapsto u^{p}$ for $p\le1$ and
the independence of the $(A_n,B_n)$, we get
$$
\E\big[Y_k^{p}\big]\le\rho^{k}Y_0^{\,p}+b\sum_{j=1}^{k}\rho^{k-j}
\le M:=Y_0^{p}+\frac{b}{1-\rho}\qquad\text{for all }k\geq 0 .
$$
Finally, since $\sum_ku_k\le\big(\sum_ku_k^{p}\big)^{1/p}$ for non-negative $u_k$, we get for all $n\geq 1$ and $C>0$
$$
\P\left(\frac{1}{n^{\gamma}}\sum_{k=0}^{n-1}\big(Z^{(1)}_{T_k}\big)^{-1}>C^{\gamma}\right)
\le\P\left(\frac1n\sum_{k=0}^{n-1}Y_k^{\,p}>C\right)\le\frac MC
$$
by Markov's inequality. Hence, for any $\eta>0$, taking $\mathscr{C}_\eta=(M/\eta)^{2\gamma}$ and
$n=\lceil\ln^{\alpha}K\rceil$,

\begin{equation} \label{maj_fn_levy}  \limsup_{K \to \infty} \P \Big( \Big((\ln^\alpha K)^{-\gamma}\sum_{k=0}^{\ln^\alpha K-1} \big(Z_{T_k}^{(1)}\big)^{-1}\Big)^2> \mathscr{C}_\eta \Big)\leq \eta. \end{equation}
Combining \eqref{maj_one_step} with \eqref{maj_LlnalphaK} and \eqref{maj_fn_levy}, we obtain that for any $\eta>0$, there exists $\mathscr{D}_\eta<\infty$ such that for $\mathscr{B}_K:= \mathscr{D}_\eta (\ln K)^{3\alpha\gamma+\eps}/K$ and $K$ large enough
\begin{equation*} \label{maj_diff_ZTi} 
\P\big(E_K\big)=\P \Big( \sup_{i \leq \ln^\alpha K} \E \big[ \Delta^{(K)}_{T_i}\big|\mathcal{G}\big]>\mathscr{D}_\eta \frac{\ln^{3\alpha\gamma+\eps}K}{K} \Big) \leq \eta + e^{-\ln^\alpha K}\leq 2\eta.  \end{equation*}

For the second term in \eqref{eq:separationEK} with $\mathscr{B}_K=\mathscr{D}_\eta {\ln^{3\alpha\gamma+\eps}K}/{K} $, Markov's inequality entails
\begin{align*} 
\ind_{E_K^C} \sum_{i=1}^{\ln^\alpha K} \P \big( \Delta_{T_i}^{(K)} >\mathscr{A}_K \big| \mathcal{G}\big)& \leq \ind_{E_K^C } \ln^\alpha K\mathscr{A}_K^{-1}\ \sup_{i\leq \ln^\alpha K}\E \big[  \Delta_{T_i}^{(K)} \big| \mathcal{G}\big]\\
& \leq\ln^\alpha K  \mathscr{D}_\eta \frac{\ln^{3\alpha\gamma+\eps}K}{K \mathscr{A}_K}
\leq\mathscr{D}_\eta \frac{\ln^{4\alpha\gamma+\eps}K}{K\mathscr{A}_K}.
\end{align*}
For $K$ large enough, the choice $\mathscr{A}_K= \ln^{5\alpha\gamma +\eps}K/K$ finally leads to 
\begin{equation} \label{maj_Delta}
\P \Big( \sup_{i \leq \ln^\alpha K} \Delta_{T_i}^{(K)}>\frac{\ln^{5\alpha \gamma +\eps}K}{K}\Big)\leq 2\eta + \frac{\mathscr{D}_\eta}{\ln^{\alpha\gamma} K}.
\end{equation}

\item \textit{Between two catastrophes.} To complete the proof, we bound the difference $\Delta^{(K)}$ between two successive jumps. Let $\mathscr{E}_K>0$. As inter-arrival times are independent exponential random variables with parameter $r$, we have
\begin{align*}
\P \Big( \exists i \leq \ln^\alpha K, T_i- T_{i-1} \geq \mathscr{E}_K \Big) = 1 - \Big( 1 - e^{-r \mathscr{E}_K} \Big)^{\ln^\alpha K}\leq  1 - e^{-2e^{-r \mathscr{E}_K}\ln^\alpha K },
\end{align*}
where the last inequality holds for $\mathscr{E}_K>0$ such that $\exp(-r\mathscr{E}_K) \leq \ln 2 / 2$ (as for $x\in[0, \ln 2/2]$, $1-x\geq e^{-2x}$).
Hence for $\mathscr{E}_K = 2\alpha (\ln\ln K)/r$, we obtain for $K$ large enough,
\begin{align} \label{maj_diff_Ti}
\P \Big( \exists i \leq \ln^\alpha K, T_i- T_{i-1} \geq \frac{2\alpha \ln\ln K}{r} \Big) \leq  1 - e^{-2\ln^{-\alpha} K }\leq \frac{2}{\ln^\alpha K}.
\end{align}
Hence the last step consists in proving that if 
$ \Delta_0^{(K)}\leq \mathscr{A}_K$
and no jump occurs before time $\mathscr{E}_K = 2\alpha (\ln\ln K) /r$, then
$ \sup\{\Delta_t^{(K)},\ t < \mathscr{E}_K \wedge T_1 \}$
stays small with a probability close to $1$. To do so, we will reuse some computations of the proof of Lemma \ref{lem_Kurtz}.
To be more precise, let us introduce the event
$$ \mathcal{G}_{2,K}:= \big\{ \sup_{t \leq \mathscr E_K} Z^{(1,K)}_t \leq  2a_1/\beta_{11} \big\}. $$
As $Z^{(1,K)}$ is smaller than the birth-death process without catastrophes (with $r=0$), classical large deviation results see references before \eqref{eq:gde_dev}) entail the existence of a finite $C$ such that 
\begin{equation}\label{maj_gdes_dev}
    1- \P(\mathcal{G}_{2,K})\leq e^{-CK}.
\end{equation}
We may then directly apply \eqref{lemma_conv_Gronw} and the first inequality of \eqref{Doob_Wt} with $k=2$ to obtain
\begin{align*}  
    \P \Big( \sup_{t < \mathscr{E}_K \wedge T_1} \Delta_t^{(K)}> e^{2 \mathcal{M}_2\mathscr{E}_K} \Big(\sqrt{\mathscr{A}_K}+  K^{\eps-\frac{1}{2}} \Big)^2\Big|  \mathcal{G}_{2,K}, \Big\{\Delta_0^{(K)}
    \leq \mathscr{A}_K \Big\}\Big) \leq \frac{C_2 \mathscr{E}_K}{K^{2\eps}}.
\end{align*}
Combining this with \eqref{maj_Delta}, \eqref{maj_diff_Ti}, \eqref{maj_gdes_dev} entails that for any $\eta>0$ and $K$ large enough,
\begin{align*}
      &\P \Big( \sup_{t \leq T_{\ln^\alpha K}} \Delta_t^{(K)}> e^{2 \mathcal{M}_2\mathscr{E}_K} \Big(\sqrt{\mathscr{A}_K}+  K^{\eps-\frac{1}{2}} \Big)^2\Big)\\ &\leq  2\eta + \frac{\mathscr{D}_\eta}{\ln^{\alpha\gamma} K}+\frac{2}{\ln^\alpha K} + \ln^\alpha K \Big( e^{-CK} +  \frac{C_2 \mathscr{E}_K}{K^{2\eps}} \Big).
\end{align*}
Adding \eqref{maj_LlnalphaK} ends the proof of Proposition \ref{prop_cvglongue}.
\end{enumerate}
\end{proof}

\subsection{Proofs of the results of Section \ref{SDElongtime}}
\subsubsection{Proof of Proposition \ref{3cas}}

We first recall the three properties at the core of the next lemma. The first has been stated in Proposition \ref{prop_monomorph}, the second is the law of large numbers applied to Lévy processes, and the last has been stated in Proposition 4 in \cite{palau2018branching}.

For $\alpha\in\R$, $\beta>0$ and $i \in \{1,2\}$, the SDE
\begin{equation*}\label{eq:onetype}
dW_t^{\alpha,\beta}=W_t^{\alpha,\beta}(\alpha-\beta W_t^{\alpha,\beta})dt+\int_0^1(\theta^{(i)}-1)W_{t-}^{\alpha,\beta}S_i(dt,d\theta^{(i)}),\qquad W_0>0,
\end{equation*}
has the explicit solution $W_t^{\alpha,\beta}=W_0e^{L_t}\big(1+\beta W_0\int_0^te^{L_s}ds\big)^{-1}$, where $L_t=(\alpha-a_i)t+K^{(i)}_t$ and $K^{(i)}$ is the Lévy process defined in \eqref{defK}. Besides, when $t$ tends to infinity,
\begin{equation}\label{eq:cesaro}
L_t/t\to\alpha+r\E[\ln\Theta^{(i)}] \quad \text{a.s.},\quad
\text{and}\quad \frac1t\int_0^tW_s^{\alpha,\beta}ds\to\frac{(\alpha+r\E[\ln\Theta^{(i)}])\vee 0}{\beta}\quad \text{a.s.}
\end{equation}

Let $i \in \{1,2\}$. The objective is to understand the behaviour of the process $Z^{(i)}$ during its excursions below a small positive number $\eta$. In particular, we want to know if it is more likely to go to $0$ or to reach $\eta$ with a positive probability. It will allow us to discriminate between the different possible long time behaviours of the population process $Z$ solution to \eqref{eqdep}.

\begin{lemma}\label{lem:comp}
Let $Z_0\in(\R_+^{*})^2$, $i\ne j\in\{1,2\}$, and $\eta\ge0$. For $t\geq 0$, define
$$
\mathcal{E}_t^{(i)}:= \{Z^{(i)}_s\le\eta, \forall s\le t\},\quad
\ell_i^{+}(\eta):=\bar a_i-\frac{\beta_{ij}}{\beta_{jj}}\big(\bar a_j-\eta\,\beta_{ji}\big)^{+},
\quad
\ell_i^{-}(\eta):=\bar a_i-\beta_{ii}\eta-\frac{\beta_{ij}}{\beta_{jj}}\,\bar a_j.
$$
Then, on a common probability space:
\begin{enumerate}[leftmargin=*,label=\textup{(\roman*)}]
\item \emph{(Upper bound).} There exist two random processes $\widehat L^{(i)}$ and $\widehat Z^{(i)}$ such that, 
%on the event
%$\big\{Z^{(i)}_s\leq\eta,  \forall s\le t\big\}$,
$$
\ind_{\mathcal{E}_t^{(i)}}Z^{(i)}_t \leq\widehat Z^{(i)}_t\leq Z^{(i)}_0e^{\widehat L^{(i)}_t},
\qquad
\lim_{t\to\infty}\frac{\widehat L^{(i)}_t}{t}\ind_{\mathcal{E}_t^{(i)}}=\ell_i^{+}(\eta)\quad\text{a.s.}
$$
\item \emph{(Lower bound).} There exist two random processes $\widecheck L^{(i)}$ and $\widecheck Z^{(i)}$ such that
%on the event
%$\big\{Z^{(i)}_s\le\eta, \forall s\le t\big\}$,
\[
Z^{(i)}_t \geq \ind_{\mathcal{E}_t^{(i)}}\widecheck Z^{(i)}_t = Z^{(i)}_0e^{\widecheck L^{(i)}_t},
\qquad
\lim_{t\to\infty}\frac{\widecheck L^{(i)}_t}{t}\ind_{\mathcal{E}_t^{(i)}}=\ell_i^{-}(\eta)\quad\text{a.s.}
\]
\end{enumerate}
Moreover, these processes satisfy the following properties:
\begin{enumerate}[leftmargin=*,label=\textup{(\alph*)}]
\item If $\ell_i^{+}(\eta)<0$, then $\widehat Z^{(i)}_t\to0$ a.s.,
and for every $p<1$ there exists $z^{*}>0$ such that
$$\P_{(z^{(i)},z^{(j)})}\big(Z^{(i)}_t\to0\big)>p,\quad \text{for all }z^{(i)}\le z^{*}\text{ and }z^{(j)}\geq\eta.$$
\item If $\ell_i^{-}(\eta)>0$, then $\widecheck Z^{(i)}_t\to+\infty$, hence, for all $z^{(i)},z^{(j)}>0$,
$$\P_{(z^{(i)},z^{(j)})}\big(\exists\,t\ge0:\ Z^{(i)}_t\geq\eta\big)=1.$$
\end{enumerate}
Consequently, for $\eta$ small enough:
\begin{enumerate}[leftmargin=*,label=\textup{(\alph*)},start=3]
\item If $\mathfrak f_{ij}<0$, then for every $z^{(j)}_0\in(0,\bar a_j/\beta_{jj}]$ and every $p<1$ there is $z^{(i)}_0>0$ such that 
$$\P_{(z^{(i)},z^{(j)})}\big(Z^{(i)}_t\to0\big)>p,\quad \text{for all }z^{(i)}\leq z^{(i)}_0\text{ and }z^{(j)}\geq z^{(j)}_0.$$
\item If $\mathfrak f_{ij}>0$, then, for all $z^{(i)}_0>0$ et $z^{(j)}_0\geq0$,
$$\P_{(z^{(i)}_0,z^{(j)}_0)}\big(Z^{(i)}_t\to0\big)=0.$$ 
\end{enumerate}
\end{lemma}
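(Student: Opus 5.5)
The plan is to build everything from pathwise comparison with one-type processes $W^{\alpha,\beta}$ driven by the same catastrophes, using their explicit solution and the a.s. limits \eqref{eq:cesaro}.

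\emph{Upper bound.} On $\mathcal E^{(i)}_t$ we have $Z^{(i)}_s\le\eta$ for $s\le t$. Hence $Z^{(j)}$ solves its equation with competition term $\beta_{ji}Z^{(i)}_s\le \beta_{ji}\eta$. Comparison for the one-dimensional jump SDE then gives $Z^{(j)}_s\ge W^{a_j-\beta_{ji}\eta,\beta_{jj}}_s=:\underline W_s$, started at $Z^{(j)}_0$. The comparison holds between jumps by ODE comparison, and is preserved at jumps because both processes are multiplied by the same factor $\theta^{(j)}$. Plugging this into the equation for $Z^{(i)}$ and dropping the self-competition term, we set
\[
\widehat L^{(i)}_t:=K^{(i)}_t-\beta_{ij}\int_0^t\underline W_s\,ds,\qquad \widehat Z^{(i)}_t:=Z^{(i)}_0e^{\widehat L^{(i)}_t}.
\]
The representation $Z^{(i)}_t=Z^{(i)}_0\exp\big(K^{(i)}_t-\int_0^t(\beta_{ii}Z^{(i)}+\beta_{ij}Z^{(j)})\big)$ follows from \eqref{Ito}. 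It yields $\ind_{\mathcal E_t}Z^{(i)}_t\le\widehat Z^{(i)}_t$. Here $\underline W$ does not depend on the event, so we may define $\widehat L$ globally. Then \eqref{eq:cesaro} gives $\widehat L^{(i)}_t/t\to \bar a_i-\beta_{ij}(\bar a_j-\beta_{ji}\eta)^+/\beta_{jj}=\ell_i^+(\eta)$ a.s.

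\emph{Lower bound.} Symmetrically, $Z^{(j)}\le W^{a_j,\beta_{jj}}=:\overline W$ always holds, since competition from $i$ only decreases $Z^{(j)}$. On $\mathcal E_t$ we also have $\beta_{ii}Z^{(i)}\le\beta_{ii}\eta$. So set
\[
\widecheck L^{(i)}_t:=K^{(i)}_t-\beta_{ii}\eta t-\beta_{ij}\int_0^t\overline W_s\,ds,
\]
whose slope is $\ell_i^-(\eta)$ by \eqref{eq:cesaro}.

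\emph{Properties (a) and (b).} For (b), $\ell_i^->0$ forces $\widecheck Z\to\infty$. If $Z^{(i)}$ stayed below $\eta$ forever we would have $Z^{(i)}_t\ge\widecheck Z_t\to\infty$, a contradiction, so $\eta$ is hit a.s.

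For (a), $\widehat Z\to0$ a.s. since its slope is negative. To get a probability statement I need uniformity in the initial condition. The key point is that $\sup_t\widehat L^{(i)}_t$ is an a.s. finite random variable whose law does not depend on $z^{(i)}$. It is monotone in $z^{(j)}$ through $\underline W$, and $z^{(j)}\ge\eta$ can be taken as the worst case, using monotonicity of $W$ in its initial value. Then
\[
\P\big(z^{(i)}e^{\sup\widehat L}<\eta\big)>p
\]
for $z^{(i)}$ small. On that event $\mathcal E_t$ holds for all $t$, hence $Z^{(i)}_t\le\widehat Z_t\to0$.

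This uniformity is the main obstacle. The comparison start for $\underline W$ must be made uniform over $z^{(j)}\ge\eta$. Also, the case $\bar a_j-\beta_{ji}\eta\le0$, where $\underline W\to0$, has to be checked to still fit the formula for $\ell_i^+$.

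\emph{Properties (c) and (d).} These follow by continuity in $\eta$: $\ell_i^\pm(\eta)\to\mathfrak f_{ij}$ as $\eta\to0$. For (c) we choose $\eta<z^{(j)}_0$ small with $\ell_i^+(\eta)<0$ and apply (a). The range $z^{(j)}\ge z^{(j)}_0$ needs the uniformity above, again via monotonicity of $\underline W$ in its initial value.

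For (d) we take $\eta$ with $\ell_i^-(\eta)>0$, so that by (b) and the strong Markov property every excursion below $\eta$ ends. To exclude $Z^{(i)}_t\to0$, note that convergence to $0$ would force $Z^{(i)}$ to stay below $\eta$ eventually, i.e. $\mathcal E$ holds from some time on. Applying (b) from that time contradicts this, so $\P(Z^{(i)}_t\to0)=0$.
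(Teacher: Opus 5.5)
Your proposal is correct and follows essentially the paper's proof: you use the same comparison processes (the one-type lower bound for $Z^{(j)}$ with rate $a_j-\beta_{ji}\eta$ in (i), the competition-free upper bound for $Z^{(j)}$ in (ii)), the Ces\`aro limits \eqref{eq:cesaro} for the slopes, a supremum of $\widehat L^{(i)}$ whose law does not depend on $z^{(i)}$ with worst case $z^{(j)}=\eta$ in (a), and continuity of $\ell_i^{\pm}$ at $\eta=0$ for (c)--(d); defining $\widehat Z^{(i)}:=Z^{(i)}_0e^{\widehat L^{(i)}}$ directly from \eqref{Ito}, instead of through a logistic comparison process, is only a cosmetic simplification. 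Two small points to tidy in (d): the case $z^{(j)}_0=0$ is not covered by (b) as stated, but it is monomorphic and follows from \eqref{eq:cesaro} since $\bar a_i>0$; and the time after which $Z^{(i)}$ stays below $\eta$ is not a stopping time, so apply (b) through the Markov property at integer times $n$ and take the countable union over $n$.
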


\begin{proof}
\noindent (i) (Upper bound). 
On the event $ \mathcal{E}_t^{(i)}$, the death rate of type-$j$ individuals contains the term $\beta_{ji}Z^{(i)}_s\le\beta_{ji}\eta,$ which implies that $Z^{(j)}_t \geq \widecheck Z^{(j)}_t$ on $\mathcal{E}_t^{(i)}$, where
\begin{equation}\label{eq:rival_lower}
\widecheck Z^{(j)}_t = \widecheck Z^{(j)}_0 + \int_0^t\widecheck Z^{(j)}_s\big(a_j-\eta\beta_{ji}-\beta_{jj}\widecheck Z^{(j)}_s\big)ds
+\int_0^t\int_0^1(\theta^{(j)}-1)\widecheck Z^{(j)}_{s-}S_j(ds,d\theta^{(j)}).
\end{equation}
By \eqref{eq:cesaro} we know that
\begin{equation}\label{eq:cesaro_lower}
\frac{1}{t}\int_0^t\widecheck Z^{(j)}_s\,ds\ \xrightarrow[t\to\infty]{}\
\frac{(\bar a_j-\eta\beta_{ji})^{+}}{\beta_{jj}}\qquad\text{a.s.}
\end{equation}
On $\mathcal{E}_t^{(i)}$, according to \eqref{eq:rival_lower}, the death rate of type $i$ individuals satisfies for all $s\leq t$
$$\mu_i+\beta_{ii}Z^{(i)}_s+\beta_{ij}Z^{(j)}_s\ge\mu_i+\beta_{ii}Z^{(i)}_s+\beta_{ij}\widecheck Z^{(j)}_s.$$
Hence, on $\mathcal{E}_t^{(i)}$, the process $Z^{(i)}$ is dominated by the solution $\widehat Z^{(i)}$ of
\begin{equation*}\label{eq:focal_upper}
\widehat Z^{(i)}_t= Z^{(i)}_0 + \int_0^t\widehat Z^{(i)}_s\Big(a_i-\beta_{ii}\widehat Z^{(i)}_s
-\beta_{ij}\widecheck Z^{(j)}_s\Big)ds+\int_0^t\int_0^1(\theta^{(i)}-1)\widehat Z^{(i)}_{s-}S_i(ds,d\theta^{(i)}).
\end{equation*}
Applying Itô's formula to $\widehat Z^{(i)}e^{-\widehat L^{(i)}}$ with
\begin{equation*}\label{eq:Lhat}
\widehat L_t^{(i)}:=K^{(i)}_t-\beta_{ij}\int_0^t\widecheck Z^{(j)}_sds
\end{equation*}
gives,
$$
\widehat Z^{(i)}_t=\frac{Z^{(i)}_0e^{\widehat L_t^{(i)}}}
{1+\beta_{ii}Z^{(i)}_0\int_0^te^{\widehat L^{(i)}_s}ds} \leq Z^{(i)}_0\,e^{\widehat L^{(i)}_t}.
$$
Finally, as $K^{(i)}_t/t\to\bar a_i$ as $t\to\infty$, using \eqref{eq:cesaro_lower}, we get
$$
\frac{\widehat L_t^{(i)}}{t}\ \xrightarrow[t\to\infty]{}\
\bar a_i-\beta_{ij}\frac{(\bar a_j-\eta\beta_{ji})^{+}}{\beta_{jj}}\ =\ \ell_i^{+}(\eta).
$$
\noindent {(ii) (Lower bound).} 
%exchange the roles of the two couplings.
Dropping the competition exerted by $i$ on $j$, $Z^{(j)}$ is dominated by
\begin{equation} \label{upper_bound_no_compet}
\widehat{Z}^{(j)}_t=  Z^{(j)}_0 + \int_0^t\widehat{Z}^{(j)}_s\big(a_j-\beta_{jj}\widehat Z^{(j)}_s\big)ds+\int_0^t\int_0^1(\theta^{(j)}-1)\widehat Z^{(j)}_{s-}S_j(ds,d\theta^{(j)}),
\end{equation}
satisfying $t^{-1}\int_0^t\widehat Z^{(j)}_sds\to\bar a_j/\beta_{jj}$ as $t\to\infty$ by \eqref{eq:cesaro}. On $\mathcal E^{(i)}_t$, the death rate of a type-$i$ individual is at most $\mu_i+\beta_{ii}\eta+\beta_{ij}\widehat Z^{(j)}_s$ for $s\leq t$, so that $Z^{(i)}\geq\widecheck Z^{(i)}$ with
\begin{equation}\label{eq:focal_lower}
\widecheck Z^{(i)}_t:=Z^{(i)}_0e^{\widecheck L^{(i)}_t},\qquad
\widecheck L^{(i)}_t:=K^{(i)}_t-\beta_{ii}\eta t-\beta_{ij}\int_0^t\widehat Z^{(j)}_sds,
\end{equation}
and $\widecheck L^{(i)}_t/t\to\bar a_i-\beta_{ii}\eta-\beta_{ij}\bar a_j/\beta_{jj}=\ell_i^{-}(\eta)$ as $t\to\infty$.
\begin{enumerate}[label=(\alph*)]
    \item  Let $p<1$. If $\ell_i^{+}(\eta)<0$, as $Z_0^{(j)}\geq \eta$, we obtain $\widehat L_t^{(i)}\to-\infty$, and $\sup_t e^{\widehat L_t^{(i)}}<\infty$ almost surely. As a consequence, $\widehat Z^{(i)}_t\le Z^{(i)}_0e^{\widehat L_t^{(i)}}$ goes to $0$ as $t$ goes to infinity. Moreover, as the law of $\sup_te^{\widehat L_t^{(i)}}$ does not depend on $Z^{(i)}_0$, there exists $z^*>0$ such that if $Z^{(i)}_0=z^{*}$, we have
$$
\P\left(\sup_{t\geq 0} Z^{(i)}_0e^{\widehat L_t^{(i)}}<\eta\right)> p.
$$
On that event, $\mathcal{E}_t^{(i)}$ holds for all $t\geq 0$, so that $\P_{(z^*, \eta)}(Z^{(i)}\to0)>p$. Monotonicity of the couplings with respect to the initial conditions (smaller $z^{(i)}$ and larger $z^{(j)}$ both help) extends the conclusion to initial conditions satisfying $z^{(i)}\leq z^{*}$ and $z^{(j)}\geq\eta$.
\item If $\ell_i^{-}(\eta)>0$ then $\widecheck L_t^{(i)}\to+\infty$, so that $\widecheck Z^{(i)}_t=Z^{(i)}_0e^{\widecheck L_t^{(i)}}\to+\infty$ when $t\to\infty$. Since $Z^{(i)}_t\ge\widecheck Z^{(i)}_t$ as long as $Z^{(i)}\leq\eta$, the level $\eta$ is reached in finite time almost surely.

\item As $\ell_i^{+}(0)=\mathfrak f_{ij}<0$ and $\eta\mapsto\ell_i^{+}(\eta)$ is continuous, we may choose $\eta>0$ smaller than both $\bar a_j/\beta_{ji}$ and $z^{(j)}_0$ with $\ell_i^{+}(\eta)<0$; (a) applied with this $\eta$ gives the claim, the requirement $z^{(j)}\geq\eta$ being satisfied since $ z^{(j)}_0\geq \eta$.

\item If $z^{(j)}_0=0$ the system is monomorphic and the claim follows from \eqref{eq:cesaro} with $\bar a_i>0$. If $z^{(j)}_0>0$, as $\ell_i^{-}(0)=\mathfrak f_{ij}>0$, by the continuity of $\ell_i^{-}$, there exists $\eta>0$ such that $\ell_i^{-}(\eta)>0$; by (b), $Z^{(i)}$ reaches $\eta$ a.s. from every initial condition, so that $\limsup_tZ^{(i)}_t\geq\eta>0$ a.s.
\end{enumerate}
\end{proof}

Recall the definitions of $M_1,M_2$ for the deterministic system in \eqref{def_LV_det}. Let
    \begin{equation}
    \label{subsetArandom}
    \mathscr{A}=\left\{(x,y)\in (\R_+)^2,~M_1(x,y)\geq 0 \text{ or }M_2(x,y)\geq 0\right\} \text{ and }\mathscr{A}^*=\mathscr{A}\cap (\R_+^*)^2.
    \end{equation}
The sets $\mathscr{A}$ and $\mathscr{A}^*$ are positively invariant under the dynamics \eqref{def_LV_det} and \eqref{eqdep}. That is, for any initial condition in $\mathscr{A}$ (resp. in $\mathscr{A}^*$), the solution to \eqref{def_LV_det} or \eqref{eqdep} remains in $\mathscr{A}$ (resp. in $\mathscr{A}^*$) a.s.

\begin{lemma}
\label{irred}
    Suppose that $Z_0\in\mathscr{A}$ almost surely and that one of the two following conditions is satisfied
\begin{align}&\label{2.sanssaut} \frac{a_1}{a_2} \leq \frac{\overline{a}_1}{\overline{a}_2} \quad \text{and} \quad \mathfrak{f}_{12}<0<\mathfrak{f}_{21}\\
&\label{2.avecsauts} \frac{a_1}{a_2} > \frac{\overline{a}_1}{\overline{a}_2}\end{align}
    Then, for any $x_0,y_0>0$ small enough, there exist $t_0\geq1$ and $\delta>0$ such that
    $$\P_z\big(\exists t\in[0,t_0],~Z_{t}^{(1)}\leq x_0 \text{ and }Z_{t}^{(2)}\geq y_0\big)\geq\delta
    \qquad\text{for every }z\in\mathscr{A}\text{ with }z^{(2)}\geq y_0.$$
\end{lemma}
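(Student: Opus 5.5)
The plan is to show the statement by exhibiting, for every starting point, one explicit scenario for the catastrophes that drives the system into the target region $\{x\le x_0,\ y\ge y_0\}$. Each scenario will be specified by finitely many constraints on the catastrophe times and intensities, so it has positive probability. Uniformity in $z$ comes from two facts: the set $\mathscr{A}\cap\{z^{(2)}\ge y_0\}$ is bounded (on $\mathscr{A}$ one coordinate has nonnegative growth, hence lies below $a_1/\beta_{11}$ or $a_2/\beta_{22}$; for this boundedness check we will use the positive invariance of $\mathscr{A}$ and the fact that $M_1,M_2<0$ far away), and the deterministic flow \eqref{def_LV_det} depends continuously on the initial condition. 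We treat the two hypotheses separately.

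\emph{Case \eqref{2.sanssaut}.} By \eqref{sign_f}, $\mathfrak f_{12}<0<\mathfrak f_{21}$ means $\bar a_1/\bar a_2<\frac{\beta_{12}}{\beta_{22}}\wedge\frac{\beta_{11}}{\beta_{21}}$. Since $a_1/a_2\le\bar a_1/\bar a_2$, the same inequalities hold with $(a_1,a_2)$. Hence $f_{12}<0<f_{21}$, and the deterministic system without catastrophes is in the regime where type $2$ excludes type $1$: every solution with $z^{(2)}_0>0$ converges to $(0,a_2/\beta_{22})$ (Appendix \ref{secLVdet}). We take $y_0<a_2/\beta_{22}$ and consider the event of no catastrophe on $[0,t_0]$, which has probability $e^{-rt_0}$. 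On this event $Z$ follows the flow of \eqref{def_LV_det}. The initial conditions range over the compact set $\overline{\mathscr{A}}\cap\{z^{(2)}\ge y_0\}$, and convergence to the hyperbolic attracting equilibrium $(0,a_2/\beta_{22})$ is uniform on compacts inside its basin $\{z^{(2)}>0\}$. Therefore there is a $t_0$ such that $z_{t_0}^{(1)}\le x_0$ and $z_{t_0}^{(2)}\ge y_0$ for every such initial condition, and we can take $\delta=e^{-rt_0}$. The point requiring care is the uniform lower bound on $z^{(2)}$ near the boundary $z^{(1)}$-axis; this is why the constraint $z^{(2)}\ge y_0$ in the statement is used.

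\emph{Case \eqref{2.avecsauts}.} By \eqref{monotoniea/a}, this case means $a_1q_2<a_2q_1$, i.e.\ $a_1\E[\ln\Theta^{(2)}]>a_2\E[\ln\Theta^{(1)}]$. Consequently there exists a point $(\theta_1^*,\theta_2^*)$ in the support of $\kappa$ with $a_1\ln\theta_2^*>a_2\ln\theta_1^*$, and a small neighbourhood $V$ of it with $\kappa(V)>0$ on which the strict inequality persists with a margin. The scenario is a succession of cycles. In each cycle one catastrophe with intensity in $V$ occurs, followed by a quiet period of length $s:=-\ln\theta_2^*/a_2$, up to small errors. We first choose $x_0,y_0$ small. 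As long as both coordinates stay in $[0,c]$ with $c$ small, the competition terms alter the growth rates by at most $(\beta_{i1}+\beta_{i2})c$. On one cycle $\ln Z^{(2)}$ then changes by $\ln\theta_2+a_2s+O(c\,s)\approx 0$. Meanwhile $\ln Z^{(1)}$ changes by $\ln\theta_1+a_1s+O(cs)=\ln\theta_1^*-\frac{a_1}{a_2}\ln\theta_2^*+O(c+\operatorname{diam}V)$, which is strictly negative. Before the cycles start we insert a preliminary phase of a bounded number of catastrophes in quick succession. This phase brings the bounded starting set into the box $[0,c]^2$ while keeping $Z^{(2)}\ge y_0'$ for some $y_0'>0$, and the resulting losses on $Z^{(2)}$ are compensated in the first cycles by lengthening the quiet periods. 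After $N$ cycles, with $N$ fixed and depending only on $x_0$ and $c$, we have $Z^{(1)}\le x_0$ and $Z^{(2)}\ge y_0$. The probability of prescribing the catastrophe times within small windows and the intensities in $V$ for $N+O(1)$ catastrophes is a positive constant $\delta$, and $t_0$ is the total duration.

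The step I expect to be the main obstacle is this uniformity bookkeeping in the second case. One has to keep $Z^{(2)}$ from drifting away from $y_0$ over the $N$ cycles despite the errors $O(c)$ and $O(\operatorname{diam}V)$. I would handle it by adjusting each quiet period adaptively, which is allowed since the windows for the catastrophe times may depend on the current state. This works because the deviation per cycle is bounded in terms of $c$ and $\operatorname{diam}V$, which can be made as small as needed before $N$ is fixed.
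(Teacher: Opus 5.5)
Your proposal is correct. For \eqref{2.sanssaut} it follows the paper's proof: deduce $f_{12}<0<f_{21}$, use uniform convergence of the flow of \eqref{def_LV_det} to $(0,a_2/\beta_{22})$ on the relevant compact set, and take $\delta=e^{-rt_0}$. One harmless slip, which the paper shares: $\mathscr{A}\cap\{z^{(2)}\geq y_0\}$ is not bounded, because $M_1\equiv 0$ on $\{z^{(1)}=0\}$. Those points are in the target at time $0$, and $\mathscr{A}^*\cap\{z^{(2)}\geq y_0\}$ is bounded, which is all you actually use.

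For \eqref{2.avecsauts} you use the same key input as the paper, but you organise the scenario differently. That input is that $\ADVref{2}$ yields intensities of positive $\kappa$-mass satisfying $a_1\ln\theta_2>(a_2-c')\ln\theta_1$ with a margin, where $c'$ absorbs competition near the origin.

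The paper uses one burst followed by one recovery. On $A_{k,t_0}$ there are exactly $k$ catastrophes in $[0,1]$, all with $\Theta^{(1)}\le\eta_1$ and $\Theta^{(2)}\ge\eta_2$, and none on $[1,t_0]$.
\begin{itemize}
\item From any starting point, the burst multiplies $Z^{(1)}$ by at most $\eta_1^k$ and $Z^{(2)}$ by at least $\eta_2^k$, up to bounded factors.
\item Afterwards $Z^{(1)}$ grows at rate at most $a_1$, so it stays below $x_0$ provided $\frac{a_1}{\beta_{11}}e^{a_1t_0}\eta_1^k\le x_0$.
\item Meanwhile $Z^{(2)}$ grows at rate at least $a_2-\beta_{21}x_0-\beta_{22}y_0$ until it hits $y_0$.
\item The ratio condition on $(\eta_1,\eta_2)$ makes the two resulting constraints on $k/t_0$ compatible for large $t_0$.
\end{itemize}
So your preliminary burst followed by a compensating quiet period is already a complete proof, once the burst is long enough and its intensities lie in the good set. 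The cycles are not needed.

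The two versions trade off as follows.
\begin{itemize}
\item The paper's version gives an explicit, state-independent event. It needs only one-sided constraints on the intensities and no adaptive windows.
\item Your version keeps the state in $[0,c]^2$ after the first phase, so competition errors are uniformly $O(c)$. The price is that the state-dependent quiet periods become genuinely necessary, as you anticipated.
\end{itemize}
To see why adaptivity is needed in your scheme: $y_0\le c$ while $x_0$ can be much smaller, so the number of cycles $N\approx\ln(c/x_0)/\gamma$ is not controlled by $c$. With a fixed period, the $O(cs)$ loss of $\ln Z^{(2)}$ per cycle would accumulate over these $N$ cycles. Adaptive periods, of length at most about $-\ln\theta_2/\big(a_2-(\beta_{21}+\beta_{22})c\big)$, keep the per-cycle decrease of $\ln Z^{(1)}$ above a fixed $\gamma>0$. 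Hence $N$ can be chosen last, and the strong Markov property at the catastrophe times gives a uniform $\delta$.
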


Notice that by symmetry, the result also holds if we exchange $1$ and $2$.
Every case of Proposition \ref{3cas} relies on the same recurrence argument stated in the following lemma: a region that is reached almost surely from anywhere is visited at arbitrarily large times, and each visit gives an independent chance of entering a prescribed target region.

\begin{lemma}\label{lem_recurrence}
Let $B,C\subseteq\R_+^2$ be Borel sets and let $t_0\geq1$, $\delta>0$ be such that
\begin{equation}\label{hyp_recurrence}
\P_z\big(\exists t\geq0,\ Z_t\in B\big)=1\quad\forall z\in\mathscr{A}^*,
\quad
\P_z\big(\exists t\in[0,t_0],\ Z_t\in C\big)\geq\delta\quad\forall z\in B\cap\mathscr{A}^* .
\end{equation}
Then, $\P_z\big(\exists t\geq0,\ Z_t\in C\big)=1$ for every $z\in\mathscr{A}^*$.
\end{lemma}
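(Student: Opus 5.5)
The plan is a standard renewal argument based on the strong Markov property of $Z$, a piecewise deterministic Markov process with respect to the filtration generated by the driving Poisson measure $\bar N$. Fix $z\in\mathscr{A}^*$ and define recursively a sequence of stopping times at which $Z$ lies in $B$, spaced by at least $t_0$: set $\sigma_0:=\inf\{t\geq 0,\ Z_t\in B\}$ and, for $n\geq0$, $\sigma_{n+1}:=\inf\{t\geq \sigma_n+t_0,\ Z_t\in B\}$. Since $\mathscr{A}^*$ is positively invariant, $Z_{\sigma_n+t_0}\in\mathscr{A}^*$ a.s. on $\{\sigma_n<\infty\}$. The first hypothesis in \eqref{hyp_recurrence} then gives, by the strong Markov property at $\sigma_n+t_0$, that $\sigma_{n+1}<\infty$ a.s. By induction, all the $\sigma_n$ are almost surely finite. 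One technical point is that the infimum may not be attained, so that $Z_{\sigma_n}$ need not lie in $B$. To avoid this, I would replace "$\exists t$" by hitting times in a form where $Z_{\sigma_n}\in B$, for instance by working with a slightly modified time $\sigma_n+\epsilon$. Alternatively, I would note that the hypotheses can be applied at the first time of the relevant event along càdlàg paths, treating $B$ as closed or restricting to events of the form "in $B$ at some time".

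Next, let $A_n:=\{\exists t\in[\sigma_n,\sigma_n+t_0],\ Z_t\in C\}$. Conditionally on $\mathcal F_{\sigma_n}$, and using $Z_{\sigma_n}\in B\cap\mathscr{A}^*$, the second hypothesis and the strong Markov property give $\P(A_n\mid\mathcal F_{\sigma_n})\geq\delta$. Since $\sigma_{n+1}\geq\sigma_n+t_0$, the event $A_n^c$ is $\mathcal F_{\sigma_{n+1}}$-measurable. Iterating the conditioning therefore yields
$$\P\Big(\bigcap_{n=0}^{N}A_n^c\Big)\leq(1-\delta)^{N+1}\xrightarrow[N\to\infty]{}0,$$
so that $\P_z(\exists t\geq0,\ Z_t\in C)=1$. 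This is conditional Borel--Cantelli in its simplest form.

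The main obstacle is the measurability and strong Markov bookkeeping. First, one must check that $Z$ is strong Markov; this follows from the fact that $Z$ is the strong solution of \eqref{eqdep} driven by a Poisson random measure, whose flow is pathwise well defined between jumps. Second, one must make sure that the process is actually in $B$ at the chosen random times. I would handle the latter by defining $\sigma_n$ as stopping times and applying the hypothesis at $Z_{\sigma_n}$ only when it is in $B$, modifying the definition if needed, as described above. The uniformity of $\delta$ and $t_0$ over $B\cap\mathscr{A}^*$ is exactly what makes the geometric bound work.
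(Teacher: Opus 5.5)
Your argument is the paper's: the same stopping times $\sigma_n$ spaced by $t_0$, the same events $A_n$ with $A_n\in\mathcal{F}_{\sigma_{n+1}}$ because $\sigma_{n+1}\geq\sigma_n+t_0$, and the same use of positive invariance and the strong Markov property. The only difference is that you conclude with the elementary bound $(1-\delta)^{N+1}$ where the paper invokes L\'evy's conditional Borel--Cantelli lemma; both are valid, and a single visit to $C$ is all that is needed. On the attainment issue you raise, which the paper passes over, the right fix is your second one: if $B$ is closed, right-continuity gives $Z_{\sigma_n}\in B$ on $\{\sigma_n<\infty\}$, and both sets $B$ used in the proof of Proposition~\ref{3cas} are closed, whereas shifting to $\sigma_n+\epsilon$ would not ensure $Z_{\sigma_n+\epsilon}\in B$.
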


\begin{proof}
Define $\sigma_1:=\inf\{t\geq0,\ Z_t\in B\}$ and, for $i\geq1$, $\sigma_{i+1}:=\inf\{t\geq\sigma_i+t_0,\ Z_t\in B\}$. As $\mathscr{A}^*$ is positively invariant, the first assumption in \eqref{hyp_recurrence} and the strong Markov property give $\sigma_i<\infty$ almost surely for every $i\geq1$; moreover, $\sigma_i\geq(i-1)t_0\to\infty$. Let $A_i:=\{\exists t\in[\sigma_i,\sigma_i+t_0],\ Z_t\in C\}$ and $\mathcal{G}_i:=\mathcal{F}_{\sigma_i}$. Since $\sigma_{i+1}\geq\sigma_i+t_0$, the event $A_i$ belongs to $\mathcal{G}_{i+1}$, and the strong Markov property at $\sigma_i$ together with the second assumption in \eqref{hyp_recurrence} yields $\P(A_i|\mathcal{G}_i)\geq\delta$ almost surely. Hence $\sum_{i\geq1}\P(A_i|\mathcal{G}_i)=\infty$ almost surely, and L\'evy's extension of the Borel--Cantelli lemma gives $\P(\limsup_iA_i)=1$. In particular $Z$ enters $C$ almost surely.
\end{proof}

\begin{proof}[Proof of Lemma \ref{irred}]
    We first assume that \eqref{2.sanssaut} holds. This latter may be rewritten
    \begin{equation*}
    \frac{a_1}{a_2}\leq \frac{\overline{a}_1}{\overline{a}_2}\quad \text{ and } \quad \frac{\overline{a}_1}{\overline{a}_2}<\frac{\beta_{12}}{\beta_{22}} \wedge \frac{\beta_{11}}{\beta_{21}}. 
    \end{equation*}
   Besides, 
   $$  f_{12}<0<f_{21} \quad  \Longleftrightarrow \quad \frac{a_1}{a_2}<\frac{\beta_{12}}{\beta_{22}} \wedge \frac{\beta_{11}}{\beta_{21}}. $$
  We deduce that \eqref{2.sanssaut} implies $ f_{12}<0<f_{21}$. Hence, applying Proposition \ref{compLVdet} (case $\exclref{2}$), we get that for any initial condition in $\mathscr{A}\setminus\{y=0\}$, the solution of the deterministic system is converging to $(0,a_2/\beta_{22})$. Let $\varphi_t$ be the flow associated to the solution of \eqref{def_LV_det}. Using that simple convergence to a point on a compact set implies uniform convergence, we get that there exists $t_0$ such that for any $t\geq t_0$, $\varphi_t(\mathscr{A}\cap\{y\geq y_0\})$ is arbitrarily close to $(0,a_2/\beta_{22})\in\mathscr{A}\cap\{x<x_0,~y>y_0\}$ for $x_0>0$ and $y_0<a_2/\beta_{22}$.
    If no jump occurs before $t_0$, the random system follows the ODE \eqref{def_LV_det}, so that
    $$        \P(Z_{t_0}^{(1)}\leq x_0 \text{ and }Z_{t_0}^{(2)}\geq y_0|Z_0^{(2)}\geq y_0)\\
        \geq  \P(\text{no jump in }[0,t_0])=e^{-rt_0}>0.$$
We now assume that \eqref{2.avecsauts} holds. The idea is to show that there is a non-zero probability that many jumps less favorable for $\Zus$ than for $\Zds$ occur in a short time and then no jump occurs until $\Zds$ reaches $y_0$. Recall that a jump with survival probabilities $(\eta_1,\eta_2)$ is favorable for $Z^{(i)}$ if \eqref{ADVi} is satisfied for $\Theta\equiv (\eta_1,\eta_2)$. As \eqref{2.avecsauts} is equivalent to $\ADVref{2}$, without loss of generality, we define $x_0$ and $y_0$ such that
    \begin{equation*} \label{choix_zlim_eps}
        \Esp[a_1\ln \Theta^{(2)} - (a_2-\beta_{21}x_0-\beta_{22}y_0)\ln \Theta^{(1)}] >0.
    \end{equation*}
    Note that this ensures $a_2-\beta_{21}x_0-\beta_{22}y_0 >0$. It implies that we can find $0<\eta_1,\eta_2<1$ s.t.
    \begin{equation}
        \label{eta2}
        \frac{a_1}{a_2-\beta_{21}x_0-\beta_{22}y_0} < \frac{\ln\eta_1}{\ln\eta_2} \quad \text{and} \quad  \Prob\left( \Theta^{(1)}\leq \eta_1 \text{ and } \Theta^{(2)}\geq\eta_2\right) >0. 
    \end{equation}
   Recall the definition of the sequence $(T_i, \Theta_i^{(1)},\Theta_i^{(2)})$ page \pageref{def_triplet} encoding the occurrence time of the $i$th catastrophe, and the associated survival probabilities.
   Let $(N_t,\ t\geq 0)$ be a Poisson process with intensity $r$ counting the number of catastrophes:
\begin{equation*}
    \label{def_N} \{ N_t=k \} \Longleftrightarrow \{ T_k \leq t < T_{k+1} \}, \quad k \geq 0.
\end{equation*}
Let us introduce for $ t_0\geq 1$ and $k \geq 0$ the event
\begin{equation*}
\label{manyjumps}
        A_{k,t_0} :=\left\{N_{1}=N_{t_0}=k\right\}  \cap\left\{\forall j\in[1,N_{1}], \Theta_{j}^{(1)}\leq \eta_1,\Theta_{j}^{(2)}\geq\eta_2\right\}.
\end{equation*}
Under $A_{k,t_0}$, there are $k$ jumps before time $1$, all of which are beneficial to type $2$ (as $\ADVref{2}$ is satisfied), and there are no jumps in $[1, t_0]$. Then, we prove the existence of $t_0 \geq 1$ and $k \in \N$ such that
\begin{equation}\label{eq:theclaim}
 \{Z_0^{(2)}> y_0\} \cap A_{k,t_0}  \subset \{Z_0^{(2)}\geq y_0\} \cap \{ \exists 0 \leq t \leq t_0,\ Z^{(1)}_t \leq x_0 \quad \text{and} \quad Z_t^{(2)}\geq y_0 \}.
\end{equation}
Note that $A_{k,t_0}$ has a positive probability for any $k$ and $t_0$. Thus, it remains to prove \eqref{eq:theclaim}. Let us define the stopping time $T^{(2)}:=\inf\{t\geq1, Z^{(2)}_t\geq y_0\}.$ Then, if $Z_0^{(2)}> y_0$ and $A_{k,t_0}$ is satisfied, we have that for all $1 \leq t \leq t_0$:
\begin{equation*}
\label{ublbcondAi}
\displaystyle\begin{array}{ll}
    Z^{(1)}_{1}\leq \frac{a_1}{\beta_{11}}\eta_1^ke^{a_1},&
    Z^{(1)}_{T^{(2)}\wedge t}\leq \frac{a_1}{\beta_{11}}e^{a_1 t}\eta_1^k,\\
    
  Z^{(2)}_{1}\geq  y_0e^{-\beta_{21}\frac{a_1}{\beta_{11}}}\eta_2^k,\quad
    &Z^{(2)}_{T^{(2)}\wedge t}\geq y_0\big[1 \wedge \big(e^{-\beta_{21}\frac{a_1}{\beta_{11}}} \eta_2^k e^{(a_2-\beta_{21}\frac{a_1}{\beta_{11}}e^{a_1 t}\eta_1^k-\beta_{22}y_0)(t-1)}\big)\big],
\end{array}\end{equation*}
using that $Z_0\in\mathscr{A}$ implies that $Z_0^{(1)}<a_1/\beta_{11}$. Thus, we want to choose $k$ and $t_0\geq 1$ such that

\begin{equation} \label{choix_k_t}
    \frac{a_1}{\beta_{11}}e^{a_1t_0}\eta_1^k \leq x_0 \quad \text{and} \quad
    \eta_2^ke^{-\beta_{21}\frac{a_1}{\beta_{11}}} e^{(a_2-\beta_{21}x_0-\beta_{22}y_0)(t_0-1)} \geq 1.\end{equation}
Note that such a choice ensures $T^{(2)}\leq t_0$ under $A_{k,t_0}$. Rearranging \eqref{choix_k_t}, we get the system

$$\displaystyle\left\{\begin{array}{l}
    \displaystyle\frac{k}{t_0} \geq -\frac{a_1}{\ln\eta_1} + \frac{C(a_1,\beta_{11},x_0,\eta_1)}{t_0},   \\
    \displaystyle\frac{k}{t_0} \leq -\frac{a_2-\beta_{21}x_0-\beta_{22}y_0}{\ln\eta_2}+ \frac{C(a_1,a_2,\beta_{11},\beta_{21},x_0,y_0,\eta_1,\eta_2)}{t_0}.
\end{array}\right.$$
As $\eta_1,\eta_2$ satisfy \eqref{eta2}, we can take $t_0$ such that the gap between the two bounds is at least $1/t_0$, then there exists at least one integer value for $k$ such that \eqref{choix_k_t} is satisfied.

In both cases the lower bound obtained, $e^{-rt_0}$ under \eqref{2.sanssaut} and $\P(A_{k,t_0})$ under \eqref{2.avecsauts}, depends only on $r,t_0,k,\eta_1,\eta_2$ and not on $z$. This gives the uniform constant $\delta$ of the statement.
\end{proof}

\begin{proof}[Proof of Proposition \ref{3cas}]\label{proof:3cas}

The case (\ref{coex}) is a direct consequence of Lemma \ref{lem:comp}(d). The strategies to prove the results in both cases (\ref{excli}) and (\ref{excl}) are similar. We show that, for any initial condition and any $p\in(0,1)$, the system will reach infinitely many times a subset of the state space from which one type will vanish with probability at least $p$  as $t$ goes to $\infty$.\\

We start with \eqref{excli}. Suppose without loss of generality that $\mathfrak{f}_{12}<0<\mathfrak{f}_{21}$. Note that Lemma \ref{irred} applies here without any additional assumption.
Let $p\in (0,1)$ and recall the definition of $\mathscr{A}^*$ in \eqref{subsetArandom}. 
Thanks to Lemmas \ref{lem:comp}(c), \ref{lem:comp}(d) and \ref{irred}, we can find $\eta>0$, $z^{(1)}_{\lim}>0$ and $t_0\geq 1$ such that for any initial conditions $(z^{(1)} ,z^{(2)} )\in \mathscr{A}^*$
\begin{enumerate}
        \item As $\ell_2^{-}(\eta)=\mathfrak{f}_{21}-\beta_{22}\eta>0$ for $\eta$ small enough, we have,
        \begin{equation} \label{choix_eps_prop} \Prob_{(z^{(1)},z^{(2)})}\left(\exists t\geq 0, \Zd \geq \eta\right)=1 \quad  \text{(Lemma \ref{lem:comp}(b)),}
        %, $\eta$ small enough for $\ell_2^{-}(\eta)>0$)},
        \end{equation}
        \item If $z^{(1)}\leq \zl$ and $z^{(2)}\geq \eta$ we have,
          \begin{equation} \label{choix_z1lim_prop}\Prob_{(z^{(1)},z^{(2)})}(\Zu \to 0)>p \quad  \text{(Lemma \ref{lem:comp}(c))},\end{equation}
        \item If $z^{(2)}\geq \eta$ we have
            \begin{equation}
                \label{choix_t0_prop}
                \Prob_{(z^{(1)},z^{(2)})}(\exists t\in[0,t_0],~Z_{t}^{(1)}\leq z^{(1)}_{\lim} \text{ and }Z_{t}^{(2)}\geq \eta)>0\quad\text{(Lemma \ref{irred})}.
            \end{equation}
    \end{enumerate}
    Lemma \ref{lem_recurrence}, applied with $B:=\R_+\times[\eta,+\infty)$ and $C:=[0,z^{(1)}_{\lim}]\times[\eta,+\infty)$ --- whose two assumptions
    \eqref{hyp_recurrence} are precisely \eqref{choix_eps_prop} and \eqref{choix_t0_prop} ---
    shows that $Z$ enters $C$ almost surely. The strong Markov property and \eqref{choix_z1lim_prop} then conclude.\\

It remains to treat the case (\ref{excl}). From the proof of Lemma \ref{irred}, we know that different events (many unfavorable jumps or no jump for a sufficiently long time) can drive one population to extinction depending on the parameters. In the following, we will consider both types of events leading to extinction: one for each population.

We first show that the two processes cannot go extinct together. Let $\eta>0$ be such that
\begin{equation}
\label{eta}
\left\{\begin{array}{ll}
   \overline{a}_1 -\eta(\beta_{11}+\beta_{12})  &>0,  \\
    \overline{a}_2 -\eta(\beta_{21}+\beta_{22})  &>0.
\end{array}\right.
\end{equation}

Similarly as in the proof of Lemma \ref{lem:comp}(d), we now prove that for any $z\in(0, \eta]^2$,
\begin{equation}
\label{Tetafinips}
    \Prob_{z}\left(\exists t>0 , (\Zu,\Zd)\notin [0,\eta]^2\right)=1,
\end{equation}
which will imply that $\Prob_{z}(\Zu \to 0 \text{ and }\Zd \to 0\text{ as } t\to \infty) =0$ for any $z\in \mathscr{A}^*$.

Let $(z^{(1)}_0,z^{(2)}_0)\in(0,\eta]^2$.
We first define a lower-bound process $Y^{(1)}$ for $Z^{(1)}$ on $[0,T_\eta]$, where $ T_{\eta}:=\inf\{t\geq 0,(\Zu,\Zd)\notin [0,\eta]^2 \}$. Let $(Y^{(1)}_t, t\geq 0)$ is the solution to
\begin{equation*} \label{eq_coupling_ext}
Y^{(1)}_t = z_0^{(1)}+ \int_0^t Y^{(1)}_s(a_1 -\eta(\beta_{11}+\beta_{12}))ds + \int_0^t\int_0^1(\theta^{(1)}-1)Y^{(1)}_{s-}S_1(ds,d\theta^{(1)}).\end{equation*}
%\begin{equation*}
 %   T_{\eta}:=\inf\{t\geq 0,(\Zu,\Zd)\notin [0,\eta]^2 \}.
%\end{equation*}
By definition,
\begin{equation}
\label{lbnondoublext}
    \ind_{\{T_{\eta}>t\}}Y^{(1)}_t \leq \ind_{\{T_{\eta}>t\}}{Z}^{(1)}_t \quad a.s.
\end{equation}
Recall the definition of $K$ in \eqref{defK}. From \eqref{Z_explicite} with $\beta_{11}=0$, we get $ Y^{(1)}_t=z^{(1)}_0 e^{K^{(1)}_t -\eta(\beta_{11}+\beta_{12})t }$. As $\eta$ satisfies \eqref{eta}, the theory of Lévy processes (see \cite[Theorem 7.2]{kyprianou2006introductory}) implies
$$\lim_{t \to \infty}Y^{(1)}_t=+\infty \quad \textit{a.s.}$$
Using \eqref{lbnondoublext} and the fact that the process $Z^{(1)}$ is almost surely bounded, we get that 
$$\P(T_{\eta}=\infty)\leq\P(\lim_{t \to \infty}{Z}^{(1)}_t=+\infty)=0.$$
This proves \eqref{Tetafinips}.

We now focus on the probability of extinction of one of the two processes. As before, we will show that for any $p\in(0,1)$, we have $\Prob(\Zu \to 0\text{ or }\Zd \to 0) \geq p$ using Lemma \ref{lem:comp}(c).

Fix $p\in(0,1)$ and $(z_0^{(1)},z_0^{(2)})\in \mathscr{A}^*$. Let $\eta>0$ be such that \eqref{eta} is satisfied. Recall that in this case we assume $a_1\bar a_2 \neq a_2\bar a_1$, and without loss of generality suppose that $a_1/a_2<\overline{a}_1/\overline{a}_2$.
From Lemma \ref{lem:comp}(c), there exist $z_{\lim}^{(1)}<\eta$, $z_{\lim}^{(2)}>0$ and $t_0 \geq 1$ such that  
$$\left\{\begin{array}{ll}
    \forall z^{(1)}\leq z_{\lim}^{(1)}, z^{(2)}\geq \eta,~
    &\Prob_{(z^{(1)},z^{(2)})}(\Zu \to 0)>p,\\
    \forall z^{(1)}\geq z^{(1)}_{\lim}, z^{(2)}\leq z_{\lim}^{(2)},~
    &\Prob_{(z^{(1)},z^{(2)})}(\Zd \to 0)>p.
\end{array}
\right.$$
As $a_1/a_2<\overline a_1/\overline a_2$, \eqref{2.avecsauts} holds after exchanging $1$ and $2$, so the symmetric version of Lemma \ref{irred} provides $\delta>0$ with
\begin{equation}\label{irred_swapped}
\Prob_{z}\big(\exists t\in[0,t_0],\ \Zu\geq z_{\lim}^{(1)}\text{ and }\Zd\leq z_{\lim}^{(2)}\big)\geq\delta
\quad\text{for every }z\in\mathscr{A}\text{ with }z^{(1)}\geq z^{(1)}_{\lim} .
\end{equation}
Set $C_1:=(0,z_{\lim}^{(1)}]\times[\eta,+\infty)$ and $C_2:=[z_{\lim}^{(1)},+\infty)\times(0,z_{\lim}^{(2)}]$, and apply Lemma \ref{lem_recurrence} with $B:=\R_+^2\setminus[0,\eta)^2$ and $C:=C_1\cup C_2$. Its first assumption is \eqref{Tetafinips}. For the second one, let $z\in B\cap\mathscr{A}^*$: since
$z_{\lim}^{(1)}<\eta$, either $z^{(1)}\leq z^{(1)}_{\lim}$, and then $z^{(2)}\geq\eta$ by definition of $B$, so that $z\in C_1$ and the condition holds; or $z^{(1)}>z^{(1)}_{\lim}$, and \eqref{irred_swapped} gives the condition. Hence $Z$ enters $C_1\cup C_2$ almost surely, and Lemma \ref{lem:comp}(c) ends the proof.
\end{proof}

\subsubsection{Proof of Proposition \ref{ergofaible}}

The proof will rely on the following technical lemma.

\begin{lemma} \label{lnZt/t}
    Under the assumptions of Proposition \ref{ergofaible}, if there exists $\eta>0$ such that
    \begin{equation} \label{above_eps} \P \big( \limsup_t Z^{(i)}_t\geq \eta \big|\limsup_tZ^{(i)}_t>0\big)=1 \end{equation}
and
    $$ \left\{ \begin{array}{ll}
\bar{a}_i-\beta_{ii}\eta>\frac{\beta_{ij}}{\beta_{jj}}\bar{a}_j  & \text{if \eqref{coex} holds}   \\
\bar{a}_i-\beta_{ii}\eta>0 & \text{if \eqref{excl} or \eqref{excli} holds},
\end{array} \right. $$
then
$$  \ind_{\{\limsup_tZ^{(i)}_t>0\}} \lim_{t \to \infty}\frac{\ln Z_t^{(i)}}{t}= 0 \quad \text{almost surely}. $$
\end{lemma}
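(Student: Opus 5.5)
The plan is to prove the two inequalities $\limsup_t \ln Z^{(i)}_t/t\le 0$ and $\liminf_t \ln Z^{(i)}_t/t\ge 0$ on the event $\{\limsup_t Z^{(i)}_t>0\}$.

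\emph{Upper bound.} This direction is essentially free. As in \eqref{upper_bound_no_compet}, dropping the interspecific competition shows that $Z^{(i)}$ is dominated by a monomorphic logistic process with catastrophes. That process is bounded by $(a_i/\beta_{ii})\vee z_0^{(i)}$, so $\ln Z^{(i)}_t\le C$ for all $t$, which gives $\limsup_t \ln Z^{(i)}_t/t\le 0$ almost surely. This holds without restricting to any event.

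\emph{Lower bound: reduction to excursions.} The work is to show that $\ln Z^{(i)}_t$ cannot decrease linearly. We write $Z^{(i)}_t=Z^{(i)}_0e^{K^{(i)}_t-\int_0^t(\beta_{ii}Z^{(i)}_s+\beta_{ij}Z^{(j)}_s)ds}$, which follows from It\^o's formula as in \eqref{Ito}. On the event in question, \eqref{above_eps} ensures that $Z^{(i)}$ returns above $\eta$ infinitely often. The plan is to control the excursions of $Z^{(i)}$ below $\eta$. During such an excursion starting at time $\sigma$, the comparison of Lemma \ref{lem:comp}(ii) gives $Z^{(i)}_{\sigma+s}\ge Z^{(i)}_{\sigma}e^{\widecheck L_s}$ with
$$\widecheck L_s=K^{(i)}_{\sigma+s}-K^{(i)}_\sigma-\beta_{ii}\eta s-\beta_{ij}\int_\sigma^{\sigma+s}\widehat Z^{(j)}_udu .$$
Here $\widehat Z^{(j)}$ is the monomorphic upper bound of type $j$. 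Under \eqref{coex}, the drift of $\widecheck L$ is $\ell_i^-(\eta)>0$ by hypothesis. Under \eqref{excl} or \eqref{excli}, the hypothesis only gives $\bar a_i-\beta_{ii}\eta>0$. In that case we restrict to the event where type $i$ survives, and argue that $Z^{(j)}\to0$ there (by Proposition \ref{3cas}), so that after a random time the $\beta_{ij}$ term has vanishing time average. The drift is then again positive.

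\emph{Lower bound: controlling the excursions.} A positive-drift L\'evy-type process has a finite overall infimum (Lemma \ref{momentinf}) with a tail uniform in the starting time. Moreover, the excursion starts at $Z^{(i)}_\sigma\ge\Theta\eta$, up to one jump. Hence $\ln Z^{(i)}$ during the excursion is bounded below by $\ln\eta+\ln\Theta_{\mathrm{jump}}+\inf_s\widecheck L_s$, up to the error from the Ces\`aro averages of $\widehat Z^{(j)}$. It then suffices to show that $\bigl(\ln\Theta_{N_t}+\inf \widecheck L^{(t)}\bigr)/t\to0$. The first term is handled by Borel--Cantelli, using $\E|\ln\Theta|<\infty$: we have $\ln\Theta_k/k\to0$ and $N_t\sim rt$. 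For the second term, we use that infima of shifted L\'evy processes over windows starting before $t$ are $o(t)$. This follows from the uniform law of large numbers $\sup_{u\le t}|K_u-\bar a_iu|=o(t)$, together with the uniform-in-window convergence of $\frac1t\int\widehat Z^{(j)}$. The latter is where we would use $\E[(\ln\Theta)^2]<\infty$, via a second-moment/martingale LLN for the additive functional of the stationary monomorphic process.

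\emph{Main obstacle.} The hardest step is making the excursion comparison uniform over the infinitely many excursions. A cleaner route is to bound $\ln Z^{(i)}_t\ge \ln\eta+\ln\Theta+\bigl(\widecheck L_t-\widecheck L_{g_t}\bigr)$, where $g_t$ is the last time before $t$ at which $Z^{(i)}\ge\eta$. One then shows that $\inf_{g\le t}(\widecheck L_t-\widecheck L_g)\ge -o(t)$, using $\widecheck L_u=\ell u+o(u)$ uniformly in $u$ and the positivity of $\ell$. The difficulty is that this $o(t)$ statement must hold for a non-stationary comparison process.
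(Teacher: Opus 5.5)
Your proposal is correct, and the ``cleaner route'' in your last paragraph is already a complete proof. It differs from the paper's and is lighter. The difficulty you flag there is not a real one.

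Set $g_t:=\sup\{s\leq t:\ Z^{(i)}_s\geq\eta\text{ or }Z^{(i)}_{s-}\geq\eta\}$, with $g_t:=0$ if this set is empty. Three facts combine: the exact identity behind \eqref{Ito}, the comparison $Z^{(j)}\leq\widehat Z^{(j)}$, and $Z^{(i)}<\eta$ on $(g_t,t]$. Together they give, pathwise and for every $t$,
\[
\ln Z^{(i)}_t\;\geq\;\ln\big(\eta\wedge Z^{(i)}_0\big)-\max_{k\leq N_t}\big|\ln\Theta^{(i)}_k\big|+\widecheck L_t-\widecheck L_{g_t},
\]
with a single global process $\widecheck L_u:=K^{(i)}_u-\beta_{ii}\eta u-\beta_{ij}\int_0^u\widehat Z^{(j)}_sds$.

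Under \eqref{excl} or \eqref{excli}, use $Z^{(j)}$ itself in place of $\widehat Z^{(j)}$. Then work on $\{Z^{(j)}\to0\}$, which by Proposition \ref{3cas} coincides a.s. with $\{\limsup_tZ^{(i)}_t>0\}$.

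All you need is the pointwise a.s. limit $\widecheck L_u/u\to\ell\geq0$. This follows from the law of large numbers for $K^{(i)}$ together with \eqref{eq:cesaro} (respectively $Z^{(j)}\to0$). Both hold from any initial condition, so no stationarity, no uniform or martingale LLN, and no second moment are needed.

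Pointwise convergence already gives $\sup_{u\leq t}|\widecheck L_u-\ell u|=o(t)$. Since $\ell\geq0$, it follows that $\widecheck L_t-\widecheck L_g\geq-2\sup_{u\leq t}|\widecheck L_u-\ell u|$ for every $g\leq t$.

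For the jump term, note that the relevant catastrophe has some index $k\leq N_t$, not necessarily $N_t$. The bound $\max_{k\leq N_t}|\ln\Theta^{(i)}_k|=o(t)$ follows from $\ln\Theta^{(i)}_k/k\to0$ (Borel--Cantelli with $\E|\ln\Theta^{(i)}|<\infty$) and $N_t/t\to r$.

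I would drop your excursion-by-excursion sketch. Its claim of a ``tail uniform in the starting time'' for the non-L\'evy process $\widecheck L$ would need a proof of its own.

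The paper proceeds differently:
\begin{itemize}
\item It cuts the path into excursions from below $\eta/2$ back up to $\eta$.
\item It uses that each up-crossing lasts at least $\ln2/a_i$, so only $O(t)$ excursions start before time $t$.
\item It bounds the minimum of $\ln Z^{(i)}$ on the $n$th excursion from below by identically distributed variables $\ln(\eta\Theta_n/2)+\inf_s\widecheck L^{(n)}_s$, built from restarted environments.
\item It concludes by Borel--Cantelli.
\end{itemize}
This last step requires $\E|\inf_s\widecheck L_s|<\infty$. That is where a moment bound on an exponential functional of a L\'evy process and Lemma \ref{momentinf} enter, and hence $\E[(\ln\Theta)^2]<\infty$.

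Your last-exit decomposition works with one comparison process. It therefore avoids restarting and coupling the environment across excursions, and avoids any integrability of infima. It uses only $\E|\ln\Theta^{(i)}|<\infty$ and does not need \eqref{above_eps}; under \eqref{coex} it yields the conclusion on the whole probability space. What the paper's route buys is integrable control on the depth of each excursion, which is more quantitative than this statement requires.
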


\begin{proof}
    Let us introduce the sequences of stopping times $(T^{(i,\pm)}_k, k\geq 1)$ as follows:
$$\left\{\begin{array}{ll}
   T^{(i,-)}_1 = \inf\{t\geq 0, Z^{(i)}_t< \eta/2\}, & \\
   T^{(i,-)}_k = \inf\{t\geq T^{(i,+)}_{k-1}, Z^{(i)}_t< \eta/2\}, & k\geq 2,\\
   T^{(i,+)}_k = \inf\{t\geq T^{(i,-)}_{k}, Z^{(i)}_t\geq \eta\}, & k\geq 1.
\end{array}\right.$$
By definition of the process $Z^{(i)}$, $\liminf_{t \to \infty}Z^{(i)}_t=0$ almost surely. Combining this with \eqref{above_eps}, we get that for any $k\geq 1,~T^{(i,\pm)}_k<\infty$ and $\lim_k T^{(i,\pm)}_k=+\infty$ almost surely. As $Z^{(i)}$ is bounded, $\limsup_{t \to \infty}(\ln Z^{(i)}_t/t)=0$.
Since the growth rate of $Z^{(i)}$ is almost surely bounded by $a_i$, for any $k\geq 1$ we have $T_k^{(i,+)}-T_k^{(i,-)}\geq \ln 2/a_i$. Let $I_n^{(i)} = [T^{(i,-)}_n,T^{(i,+)}_{n}]$. Fix $\eta<1$ so that $\ln Z^{(i)}_t\leq 0$ for all $t\in I_n^{(i)}$ for all $n\in\N^*$. We have
\begin{equation}
\label{liminflogZ}
    0\geq\liminf_{t\to \infty}\frac{\ln Z^{(i)}_t}{t}=\liminf_{n\to \infty}\inf_{t\in I_n^{(i)}}\frac{\ln Z^{(i)}_t}{t}\geq \liminf_{n \to \infty}\frac{\inf_{t\in I_n^{(i)}}\ln Z^{(i)}_t}{n\ln 2/{a_i}},
    %=:\liminf_{n \to \infty}\frac{X_n}{n},
\end{equation}

The variables $\inf_{t\in I_n^{(i)}}\ln Z^{(i)}_t$ correspond to the logarithm of the minimum value attained by $Z^{(i)}$ during its $n$th excursion below $\eta/2$, and since $Z^{(i)}$ is not a Markov process, those variables are not independent. In what follows, we will couple them with a sequence of independent and identically distributed random variables $(X_n,n\geq1)$ such that almost surely for all $n\geq 0$, $X_n\leq\inf_{t\in I_n^{(i)}}\ln Z^{(i)}_t$. We will not use the same definition of $X$ in the case \eqref{coex} and in the cases \eqref{excl} and \eqref{excli}, but in both situations, we will use the following characterization for a sequence of independent and identically distributed random variables:
$$\limsup_{n}\frac{|X_n|}{n}=0 ~\text{a.s}\iff \E[|X_1|]<\infty.$$

\begin{enumerate}
    \item[1)] First assume that \eqref{coex} holds.
For $j \in \{1,2\}$, we define the process $(\widehat{Z}^{(j)}_t,t\geq 0)$ as the strong solution to \eqref{upper_bound_no_compet} with $\widehat{Z}^{(j)}_0=a_j/\beta_{jj}$.
It is an upper-bound for the process $Z^{(j)}$. As previously, we consider a non-Lévy random environment $\widecheck L^{(i)}$, defined as in \eqref{eq:focal_lower}. %While $Z^{(i)}$ is lower than $\eta$, $L^{(i)}$ is a lower-bound for the growth rate of $Z^{(i)}$.
For any initial condition $Z_0^{(i)}<\eta$:
$$\ind_{T^{(i,+)}_1>t}Z_0^{(i)}e^{\widecheck L^{(i)}_t}\leq Z_t^{(i)}.$$
Since we have the stochastic domination 
$$Z^{(i)}_{T_n^{(i,-)}}\succeq\eta\Theta^{(i)}_n/2,$$
we can write
$$\inf_{t\in I_n^{(i)}}Z_t^{(i)}\succeq\eta\Theta^{(i)}_n\inf_{t \in I_n^{(i)}}e^{\widecheck L^{(i)}_t - \widecheck L^{(i)}_{T_n^{(i,-)}}}/2,$$
where $\Theta^{(i)}_n$ denotes the survival probability of the catastrophe occurring at time $T^{(i,-)}_n$ (with the convention $\Theta^{(i)}_n=1$ if $Z^{(i)}$ crosses $\eta/2$ continuously), and where, by the strong Markov property at $T^{(i,-)}_n$, the environment driving $Z^{(i)}$ after that time is the shifted environment. We can thus define, on the same probability space, a sequence a processes indexed by $n\geq 1$:
$$L^{(i),n}_t:=\widecheck L^{(i)}_{T^{(i,-)}_n+t}- \widecheck L^{(i)}_{T^{(i,-)}_n}, t\geq 0,$$
with a fixed initial condition $\widehat{Z}^{(j)}_{T^{(i,-)}_n}=a_j/{\beta_{jj}}$ for all $n\geq 1$. Therefore, $(L^{(i),n},n\geq 1)$ are independent and identically distributed.

Only this post-jump environment is involved in the bound. Since $\widehat{Z}^{(j)}_{T^{(i,-)}_n}\leq a_j/\beta_{jj}$ and $\widecheck L^{(i)}$ is non-increasing in $\widehat{Z}^{(j)}$, the shifted environment $ L^{(i),n}$ stochastically dominates a copy of $\widecheck L^{(i)}$ which is independent of $\Theta^{(i)}_n$; this is all we need, since only a lower bound on $\inf_{t\geq0}e^{\widecheck L^{(i)}_t}$ is used.
Recall \eqref{liminflogZ} and let $(\underline{L}^{(i)}_n, n \in \N)$ be a sequence of independent identically distributed random variables with law $\inf_{t\in I_n^{(i)}}L_{t-T^{(i,-)}_n}^{(i),n}$ and $(\Theta_{n}^{(i)}, n \in \N)$ be a sequence of independent identically distributed random variables with law $\Theta^{(i)}$, independent of the first sequence. We thus have to prove that
$$\liminf_{n\to\infty}\frac{\ln ( \eta\Theta_{n}^{(i)}e^{\underline{L}^{(i)}_n})}{n}=0  \Longleftrightarrow \E[|\ln\Theta^{(i)}|]+\E[|\inf_{t \geq 0} \widecheck L^{(i)}_t|]<+\infty. $$
According to \eqref{Z_explicite}, we have
\begin{align*}
    \int_0^{t}\widehat{Z}^{(j)}_sds & = \int_0^{1}\widehat{Z}^{(j)}_sds-\frac{a_j}{\beta_{jj}}\ind_{\{t>1\}}\int_1^t\frac{e^{K^{(j)}_s}}{1+a_j\int_0^se^{K^{(j)}_u}du}ds\\
    & \geq \int_0^{1}\widehat{Z}^{(j)}_sds-\ind_{\{t>1\}}\frac{1}{\beta_{jj}}\int_1^t\frac{e^{K_s^{(j)}}}{\int_0^s e^{K^{(j)}_u}du}ds\\
     &=\int_0^{1}\widehat{Z}^{(j)}_sds-\ind_{\{t>1\}}\frac{1}{\beta_{jj}}\left[\ln\left(\int_0^te^{K^{(j)}_s}ds\right)-\ln\left(\int_0^1e^{K^{(j)}_s}ds\right)\right],
\end{align*}
which gives a lower bound for $\widecheck L^{(i)}_t$ using \eqref{eq:focal_lower}.
Since $(\widehat{Z}^{(j)}_t,t\geq 0)$ is bounded, $\int_0^{1}\widehat{Z}^{(j)}_sds$ has a finite mean, and from Jensen's inequality $\ln\int_0^1e^{K^{(j)}_s}ds\geq\int_0^1K_s^{(j)}ds\geq\min\limits_{s\in [0,1]}K^{(j)}_s$ has also a finite mean according to Lemma~\ref{momentinf}. Moreover, for any $\eps>0$,
\begin{align*}
 \ind_{\{t>1\}}\left[\ln\left(\int_0^te^{K^{(j)}_s}ds\right)\right] 
  % \leq\ind_{\{t>1\}}\left[\ln \left(e^{(1+\varepsilon)\E[K^{(j)}_1]t}\int_0^te^{K^{(j)}_s-(1+\varepsilon)\E[K^{(j)}_1]s}ds\right)\right] \\
  % &\leq \ind_{\{t>1\}}\left[(1+\varepsilon)\E[K^{(j)}_1]t+\ln\left(\int_0^te^{K^{(j)}_s-(1+\varepsilon)\E[K^{(j)}_1]s}ds\right)\right]\\
   &  \leq (1+\varepsilon)\E[K^{(j)}_1]t+\left[\ln\left(\int_0^{\infty}e^{K^{(j)}_s-(1+\varepsilon)\E[K^{(j)}_1]s}ds\right)\right]^+
\end{align*}
where $[\cdot]^+$ stands for the positive part. We now prove that the last term, which does not depend on $t$, has finite mean.
From Jensen's inequality, we get for $p >0$
$$\E\left[\ln\left(\int_0^{\infty}e^{\widetilde{K}^{(\eps)}_s}ds\right)\right]=\frac{1}{p}\E\left[\ln\left(\int_0^{\infty}e^{\widetilde{K}^{(\eps)}_s}ds\right)^p\right]\leq\frac{1}{p}\ln\E\left[\left(\int_0^{\infty}e^{\widetilde{K}^{(\eps)}_s}ds\right)^p\right],$$
where $(\widetilde{K}^{(\eps)}_t,t\geq 0):= (K^{(j)}_t-(1+\varepsilon)\E[K^{(j)}_1]t,t\geq 0)$ is spectrally negative and goes to $-\infty$ when $t\to\infty$. Then, there exists $0<p<1$ such that $\E[e^{p\widetilde{K}^{(\eps)}_1}]<1$.
According to \cite[Lemma 2.1]{maulik2006tail}, this ensures that $\E[(\int_0^{\infty}e^{\widetilde{K}^{(\eps)}_s}ds)^p]<\infty$.
To summarize, we have proven that
$$ \widecheck L^{(i)}_t\geq K_t^{(i)}-\eta\beta_{ii}t-\frac{\beta_{ij}}{\beta_{jj}}(1+\varepsilon)\E[K^{(j)}_1]t+ \mathscr{R} =: K^{(\eps,\eta)}_t + \mathscr{R}, $$
where $\mathscr{R}$ has a finite mean and $\left(K^{(\eps,\eta)}_t,t\geq 0\right)$ is a Lévy process with finite variance which drifts to $+\infty$ if $\eps$ is small enough. According to Lemma \ref{momentinf}, its infimum has a finite mean. It ends the proof of Lemma \ref{lnZt/t} under assumption \eqref{coex}. \\

\item[2)]Assume that \eqref{excl} or \eqref{excli} holds.
In this case, from Proposition \ref{3cas} 
$$
\{\limsup_{t \to \infty} Z^{(i)}_t>0\}=\{\lim_{t \to \infty} Z^{(j)}_t=0\},
$$
almost surely. If $\eps>0$ satisfies $\bar{a}_i-\eta\beta_{ii}-\eps\beta_{ij}>0,$ then the time 
$$
\tau^{(j)}_{\eps}:=\inf\{t\geq 0, \forall s\geq t, Z^{(j)}_s<\eps\},
$$
is almost surely finite on the event $\{\lim_{t \to \infty} Z^{(j)}_t=0\}$. Define now the Lévy process $M^{(i)}$ via
$$M^{(i)}_t=K^{(i)}_t-\eta\beta_{ii}t-\eps\beta_{ij}t, \quad t \geq 0.$$
Using the same arguments as previously we have for any $t\geq0$ and any integer $n$
$$\ind_{t \in [T^{(i,-)}_n,T^{(i,+)}_n],t>\tau^{(j)}_{\eps}}Z_{T^{(i,-)}_n}^{(i)}e^{M^{(i)}_t-{M^{(i)}_{T^{(i,-)}_n}}}\leq Z_t^{(i)},$$
and we can write a similar stochastic domination as before
$$\ind_{t>\tau^{(j)}_{\eps}}\inf_{[T_n^{(i,-)},T_{n}^{( i,+)}]}Z_t^{(i)}\succeq\inf_{t\geq 0}\varepsilon\Theta^{(i)}e^{M^{(i)}_t}/2.$$
As a consequence we just have to check that $|\inf_{t\geq 0}M^{(i)}_t|$ has a finite mean, which stems from a direct application of Lemma \ref{momentinf}.
\end{enumerate}
\end{proof}

\begin{proof}[Proof of Proposition \ref{ergofaible}]
    We will prove that for each $i\in \{1,2\}$, we have:
\begin{equation} \label{to_show}
    \ind_{\{\limsup_tZ^{(i)}_t>0\}}\lim_{t \to \infty}\frac{1}{t}\int_0^t(\beta_{ii}Z^{(i)}_s+\beta_{ij}Z^{(j)}_s)ds=\ind_{\{\limsup_tZ^{(i)}_t>0\}}\bar{a}_i\quad a.s.,
\end{equation}
where $j=3-i$. Let $\widetilde{Z}^{(i)}_t:=  {Z}^{(i)}_t e^{-K^{(i)}_t}$, $t \geq 0$, where $K$ is defined in \eqref{defK}. 
From \eqref{Ito}, we get
\begin{equation}
\label{preuvequant1}
    \frac{1}{t}\int_0^t(\beta_{ii}Z^{(i)}_s+\beta_{ij}Z^{(j)}_s)ds=\frac{\ln Z^{(i)}_0}{t}+\frac{K^{(i)}_t}{t}-\frac{\ln Z^{(i)}_t}{t}.
\end{equation}
The first term of the right hand side goes to $0$ almost surely. Moreover, %Strong law of large numbers for Lévy processes (see for instance Ex. 7.2 in \cite{kyprianou2006introductory}) ensures that
$$ \lim_{t \to \infty}\frac{K^{(i)}_t}{t}=\E[K^{(i)}_1]= \bar{a}_i \quad \text{a.s.}, $$
so that \eqref{to_show} holds if the last term on the right hand side of \eqref{preuvequant1} goes to $0$ a.s., which is the content of Lemma \ref{lnZt/t}. It thus only remains to check that its assumptions are satisfied, that is, to exhibit some $\eta>0$ satisfying both $\overline a_i-\beta_{ii}\eta>0$ and \eqref{above_eps}. We may assume that $\P(\limsup_tZ^{(i)}_t>0)>0$, since otherwise \eqref{to_show} is trivial.
\begin{enumerate}
\item[1.] If \eqref{excl} or \eqref{excli} holds, we choose $\eta>0$ and $\eps>0$ such that $\overline a_i-\beta_{ii}\eta-\beta_{ij}\eps>0$, ensuring that $\overline a_i-\beta_{ii}\eta>0$. By Proposition \ref{3cas}, almost surely $\{\limsup_tZ^{(i)}_t>0\}=\{\lim_tZ^{(j)}_t=0\}$, so that on this event the time $\tau^{(j)}_{\eps}:=\inf\{t\geq0,\ \forall s\geq t,\ Z^{(j)}_s<\eps\}$ is finite. For $t\geq\tau^{(j)}_{\eps}$, and as long as $Z^{(i)}_t\leq\eta$, the comparison of Lemma \ref{lem:comp}(ii) (with $\widehat Z^{(j)}$ replaced by the constant $\eps$ in \eqref{eq:focal_lower}) yields
$$Z^{(i)}_t\geq Z^{(i)}_{\tau^{(j)}_{\eps}}e^{M^{(i)}_t-M^{(i)}_{\tau^{(j)}_{\eps}}},
\qquad M^{(i)}_t:=K^{(i)}_t-(\beta_{ii}\eta+\beta_{ij}\eps)t,$$
and the Lévy process $M^{(i)}$ drifts to $+\infty$ by the choice of $\eta$ and $\eps$. Then, $Z^{(i)}$ reaches $\eta$ at arbitrarily large times on the event $\{\lim_tZ^{(j)}_t=0\}$ and \eqref{above_eps} holds.
\item[2.] If \eqref{coex} holds, we choose $0<\eta<\mathfrak{f}_{ij}/\beta_{ii}$, which is possible since $\mathfrak{f}_{ij}>0$. This choice ensures $\overline a_i-\beta_{ii}\eta>0$, and so that $\ell^-_i(\eta)>0$, with the notation of Lemma \ref{lem:comp}. Hence, by Lemma \ref{lem:comp}(b), the process $Z^{(i)}$ reaches the level $\eta$ in finite time almost surely, from any initial condition; applying this successively at the times $\sigma_{n+1}:=\inf\{t\geq\sigma_n+1,\ Z^{(i)}_t\geq\eta\}$ and using the strong Markov property, we get that $Z^{(i)}$ reaches $\eta$ at arbitrarily large times, so that $\limsup_tZ^{(i)}_t\geq\eta$ almost surely and \eqref{above_eps} holds. This ends the proof.
\end{enumerate}
\end{proof}

\subsection{Proofs of the results of Section \ref{newcomer_section}}

We now consider that the population with type-$1$ individuals is established, and that a mutant with type-$2$ tries to invade.
\subsubsection{Branching Process in Varying Environment (BPVE)}
\label{subsec_introofbpre}
First, we define a Markov process of successive environments $E:=(E_n,n\geq0)= (t_n, \theta_n, z_n)$ on $\mathcal{E}:=\R^+\times[0,1]\times\R^+$, where $t_n$ is the time between the $n$th and $(n+1)$th catastrophe, $\theta_n$ is the probability of survival of the $n$th catastrophe for the invading population, and $z_n$ is the quantity of individuals of the established type just after the $n$th catastrophe. Let $(\phi_t:\R^+\to\R^+,t\geq0)$ be a collection of continuously differentiable functions which are upper bounded, and satisfy $\phi_0=\operatorname{Id}$ and $\phi_s\circ\phi_t=\phi_{s+t}~~\forall s,t\geq0$.
Conditionally on $E_n=(t_n,\theta_n,z_n)$, the law of $E_{n+1}$ is defined as the law of
$$(T,\Theta^{(2)},\Theta^{(1)}\phi_{t_n}(z_n))$$
where $T$ follows an exponential law of parameter $r$, $(\Theta^{(1)},\Theta^{(2)})$ follows a law $\kappa$ on $(0,1)^2$ and $(T,\Theta^{(1)},\Theta^{(2)})$ is independent of $(E_0,E_1,...,E_n)$.
We impose the initial state $E_0=(T_1,1,z_0^{(1)})$ with $T_1$ an exponential variable of parameter $r$ and $z_0^{(1)}>0$.
 This uniquely defines a law for the random catastrophes process, which we will denote $\operatorname{RC}(r,\kappa,z_0^{(1)},\phi)$.

To define the BPVE, we fix a reproduction law as a function of a given environment. Given $E=(t,\theta, z)\in\mathcal{E}$, 
each type-2 individual is killed by the catastrophe with probability $1-\theta$ and, with probability $\theta$, it survives and has a number $n_t^{(z)}$ of descendants, where $(n_s^{(z)},s\leq t)$ is a birth-death process with birth rate $\lambda_2$, death rate (at time $s\geq 0$) given by $\mu_2+ \beta_{21}\phi_s(z)$ and initial condition $n_0 = 1$. 
Thus, if $B$ is a Bernoulli random variable with parameter $\theta$, independent of $n_t^{(z)}$, the reproduction law is the law of $Bn_t^{(z)}$, denoted by $\mathcal{L}(E)$.

We can now define the law of our branching process with random catastrophes:
$$X=(X_n,n\geq0)\sim\operatorname{BPRC}(r,\kappa,z_0^{(1)},\phi,\lambda_2,\mu_2,\beta_{21})$$
if there exist a process $(E_n,n\geq 0)\sim\operatorname{RC}(r,\kappa,z_0^{(1)},\phi)$ and random variables $(Y_{n,i,e},n\geq1,i\geq 1,e\in \mathcal{E})$ such that $Y_{n,i,e}$ has law $\mathcal{L}(e)$, and for any $n\neq n',i\neq i',e,e'$, $Y_{n,i,e}$ and $Y_{n',i',e'}$ are independent and satisfy
$$X_{n+1}=\sum_{i=1}^{X_n}Y_{n,i,E_n}.$$

\subsubsection{Results on Branching processes with random catastrophes}
In a recent paper, Kersting gave general properties of the long-time behaviour of discrete branching processes in varying environment under very mild conditions \cite{Kersting19}. In order to apply those results, we first prove that a process $X$ with law $\operatorname{BPRC}(r,\kappa,z_0^{(1)},\phi,\lambda_2,\mu_2,\beta_{21})$ satisfies condition $(A)$ in \cite{Kersting19}. Let $E:=(E_n,n\in\N)$. In our setting, this condition $(A)$ reads as follows.
\begin{equation} \label{ass_A_Kersting} \P\left(\sup_{n}\frac{(\E[Y_{n,1,E_n}^2|E]-\P(Y_{n,1,E_n}=1|E))\P(Y_{n,1,E_n}>0|E)}{(\E[Y_{n,1,E_n}|E]-\P(Y_{n,1,E_n}=1|E))\E[Y_{n,1,E_n}|E]} < \infty \right)=1.\end{equation}
In the next lemma, we prove that this condition is satisfied.

\begin{lemma} \label{lemma_condA}
    Let $X$ be a $\operatorname{BPRC}(r,\kappa,z_0^{(1)},\phi,\lambda_2,\mu_2,\beta_{21})$. 
Then $X$ satisfies \eqref{ass_A_Kersting}.
\end{lemma}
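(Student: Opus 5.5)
Fix an environment $e=(t,\theta,z)$ and write $Y=Bn^{(z)}_t$ for the offspring variable. The plan is to bound the ratio in \eqref{ass_A_Kersting} deterministically, uniformly over all environments that can actually occur. This requires explicit moment formulas for a birth--death process with time-dependent death rate $d_s:=\mu_2+\beta_{21}\phi_s(z)$. The only features of the environment I will use are two: $\phi_s(z)$ is bounded by some constant $\bar z$ (the flows $\phi$ are upper bounded), and $d_s\ge \mu_2$.

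\emph{Moments of $n_t^{(z)}$.} Set $\rho(s):=\int_0^s(\lambda_2-d_u)\,du$, so that $m:=\E[n_t]=e^{\rho(t)}$. The classical linear birth--death (Kendall) formulas give
\[
\P(n_t>0)=\Big(1+\lambda_2\int_0^t e^{-\rho(s)}ds\Big)^{-1},
\qquad
\E[n_t^2]=m^2\Big(1+2\lambda_2\int_0^te^{-\rho(s)}ds\Big)-m .
\]
Writing $I:=\lambda_2\int_0^te^{-\rho(s)}ds$, these become $\P(n_t>0)=1/(1+I)$ and $\E[n_t^2]=m^2(1+2I)-m$.

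For $Y=Bn_t$ this yields
\[
\E[Y]=\theta m,\qquad \E[Y^2]=\theta\big(m^2(1+2I)-m\big),\qquad \P(Y>0)=\theta/(1+I).
\]
The distribution of $n_t$ is also explicit: it is a zero-modified geometric law, $\P(n_t=k)=(1-p_0)(1-\xi)\xi^{k-1}$ for $k\ge1$, with $1-\xi = m/(1+I)$. Hence
\[
\P(Y=1)=\theta\,\frac{m}{(1+I)^2}.
\]

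\emph{The ratio.} Substituting these expressions, the numerator in \eqref{ass_A_Kersting} is
\[
\frac{\theta^2\big(m^2(1+2I)-m-m/(1+I)^2\big)}{1+I}\le \frac{\theta^2 m^2(1+2I)}{1+I}\le 2\theta^2m^2,
\]
and the denominator is
\[
\theta m\cdot\theta m\Big(1-\frac{1}{(1+I)^2}\Big)=\theta^2m^2\,\frac{I(2+I)}{(1+I)^2}.
\]
The ratio is therefore at most
\[
\frac{2(1+I)^2}{I(2+I)}\le \frac{1+I}{I}=1+\frac1I .
\]
So condition (A) reduces to showing $\inf_n I_n>0$ almost surely, where $I_n=\lambda_2\int_0^{t_n}e^{-\rho_n(s)}ds$.

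\emph{Main obstacle: $I_n$ degenerates when $t_n\to0$.} Since $d\ge\mu_2$, we have $e^{-\rho(s)}\ge e^{-\lambda_2 s}$, so $I_n\ge 1-e^{-\lambda_2 t_n}$. This lower bound is small only when $t_n$ is small, and then the ratio blows up like $1/(\lambda_2 t_n)$. The inter-catastrophe times are i.i.d. $\mathrm{Exp}(r)$, so $\inf_n t_n=0$ almost surely and the supremum over $n$ may be infinite. The first thing I would try is to check whether the small-$t$ blow-up is spurious. As $t\to0$, $Y$ is close to $B$, so $\E[Y^2]-\P(Y=1)$ is of order $\theta t$. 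The numerator I dropped, $m^2(1+2I)-m-m/(1+I)^2$, is indeed $O(t)$ rather than $O(1)$: expanding each term to first order in $t$ gives $(1+2\rho+2I)-(1+\rho)-(1+\rho-2I)+O(t^2)=4I+O(t^2)$, since $\rho$ cancels exactly. The numerator is thus about $\theta^2\cdot 4I$, and the denominator is about $\theta^2\cdot 2I$.

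So I would keep the exact numerator and bound
\[
\frac{m^2(1+2I)-m-m(1+I)^{-2}}{1+I}\Big/\frac{m^2I(2+I)}{(1+I)^2}
\]
uniformly over $t\in(0,\infty)$ and $d\in[\mu_2,\mu_2+\beta_{21}\bar z]$. Near $t=0$ this uses the expansion above. For large $t$, $I$ stays bounded below by $1-e^{-\lambda_2 t}$ and the bound $1+1/I$ applies. The result is a deterministic bound, independent of $\theta$ and of $n$, which gives \eqref{ass_A_Kersting}.
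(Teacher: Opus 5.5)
Your strategy is sound in principle: computing the ratio in \eqref{ass_A_Kersting} explicitly from Kendall's formula and bounding it uniformly is what Kersting's Example~8 does for linear fractional laws, and the paper invokes that example after exhibiting the linear fractional structure. There is, however, a genuine gap: the explicit formulas you use are wrong, so the ratio you bound is not the ratio in \eqref{ass_A_Kersting}.

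With $m=e^{\rho(t)}$ and $I=\lambda_2\int_0^te^{-\rho(s)}ds$, Kendall's formula \eqref{eq:kendall} gives the following, in place of your three formulas:
\begin{itemize}
\item $\P(n_t>0)=1/(m^{-1}+I)=m/(1+mI)$, not $1/(1+I)$;
\item the geometric law of $n_t$ given $n_t\ge1$ has $1-\xi=1/(1+mI)$, not $m/(1+I)$;
\item $\E[n_t^2]=m(1+2mI)$, not $m^2(1+2I)-m$.
\end{itemize}
Two sanity checks expose the errors. Your second-moment formula gives $\E[n_0^2]=0$. Your zero-modified geometric law has mean $1/m$ instead of $m$.

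Your first-order expansion also does not follow from your own formulas, because the constant terms do not cancel. In fact $m^2(1+2I)-m-m(1+I)^{-2}=-1+4I+O(t^2)$. This is negative near $t=0$, which is impossible for $\E[Y^2]-\P(Y=1)$. The value $4I$ you report is what the correct formulas give, not yours. Two smaller points:
\begin{itemize}
\item The inequality $2(1+I)^2/(I(2+I))\le 1+1/I$ is reversed; only $\le 2(1+1/I)$ holds.
\item The final uniform bound is only sketched. You treat small $t$ and large $t$, but not the intermediate range, nor uniformity in the time-dependent death rate.
\end{itemize}

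Once the formulas are corrected, the small-$t$ ``obstacle'' turns out to be an artifact. Put $b:=mI$. Then
\begin{itemize}
\item $\E[Y]=\theta m$ and $\E[Y^2]=\theta m(1+2b)$;
\item $\P(Y>0)=\theta m/(1+b)$ and $\P(Y=1)=\theta m/(1+b)^2$.
\end{itemize}
That is, $Y$ is linear fractional with parameters $(\theta m,b)$, exactly as in the paper. The ratio in \eqref{ass_A_Kersting} is then, for every environment,
\[
\frac{(1+2b)(1+b)^2-1}{(1+b)\big((1+b)^2-1\big)}=\frac{4+5b+2b^2}{2+3b+b^2}=2-\frac{b}{2+3b+b^2}<2 .
\]
This bound does not depend on $t_n$, $\theta_n$ or $z_n$. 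It needs no bound on $\phi$, no comparison $d\ge\mu_2$, and no control of $\inf_n t_n$. This is precisely the computation behind Kersting's Example~8.

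So your direct-computation route would give a self-contained version of the paper's proof. As written, though, it rests on incorrect moment formulas and does not establish the lemma.
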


\begin{proof}[Proof of Lemma \ref{lemma_condA}]
A probability law on $\N$ is called linear fractional with parameters $(m,b)$, $m>0$, $b\ge0$, $m\le1+b$, if its probability generating function writes
\begin{equation} \label{shape_linfrac}
f(s)=1-\frac{m(1-s)}{1+b(1-s)}, \quad 0\leq s <1 .
\end{equation}
Let $n\in \N$, and let $E_n=(t_n,\theta_n,z_n)$ be the environment. We introduce the notations
$$\psi_{E_n}(t):=\int_0^{t}\big(\lambda_2-\mu_2-\beta_{21}\phi_u(z_n)\big)du,\quad
V_{E_n}(t):=\int_0^{t}\lambda_2e^{-\psi_{E_n}(u)}du,$$
$$
W_{E_n}(t):=\int_0^t(\mu_2+\beta_{21}\phi_u(z_n))e^{-\psi_{E_n}(u)}du .
$$
According to \cite{kendall1948generalized}, for $s\in[0,1)$,
\begin{equation}\label{eq:kendall}
F^{(n)}_{t_n}(s):=\E\big[s^{n^{(z_n)}_{t_n}}|E_n\big]
=1-\frac{1-s}{e^{-\psi_{E_n}(t_n)}+(1-s)V_{E_n}(t_n)} .
\end{equation}
Besides, by independence of $B$ and $n^{(z)}$, 
$$\E[s^{Y_{n,1,E_n}}|E_n]= (1-\theta_n)s^0+\theta_n F^{(n)}_{t_n}(s).$$
Substituting \eqref{eq:kendall} yields
$$ \E[s^{Y_{n,1,E_n}}|E_n]=1-\frac{\theta_n(1-s)}{e^{-\psi_{E_n}(t_n)}+(1-s)V_{E_n}(t_n)}
=1-\frac{\theta_n e^{\psi_{E_n}(t_n)}(1-s)}{1+(1-s)e^{\psi_{E_n}(t_n)}V_{E_n}(t_n)}.
$$
This entails that  the law of $Y_{n,1,E_n}$ conditionally on $E_n$ is linear fractional with parameters
$$
m_{E_n}=\theta_ne^{\psi_{E_n}(t_n)},\qquad b_{E_n}=e^{\psi_{E_n}(t_n)}V_{E_n}(t_n).
$$
In Example 8 in \cite{Kersting19}, Kersting has proven that linear fractional laws uniformly satisfy condition \eqref{ass_A_Kersting} whatever their parameters. This ends the proof of the Lemma.
\end{proof}

 We will apply Lemma \ref{lemma_condA} with $\phi$ being the flow of the solution to the logistic equation.
More precisely, for $a \in \R$, $\beta > 0$, we denote by $ \phi^{a,\beta}$ the solution to 
$$ \dot{\phi}^{a,\beta}= \phi^{a,\beta}(a-\beta \phi^{a,\beta}), $$
and for all $t\geq 0$,
\begin{equation}
\label{eqdep-abeta}
Z^{a,\beta}_t = z_0^{(1)}+ \int_0^tZ^{a,\beta}_s\left(a - \beta Z^{a,\beta}_s \right)ds + \int_0^t \int_0^1 (\theta^{(1)}-1) Z^{a,\beta}_{s-}S_1(ds,d\theta^{(1)}).
\end{equation}
We may now apply \cite[Theorem 1]{Kersting19}.
\begin{prop}
\label{prop_Kersting_aleatoire}
    Let $(a,\beta)\in \R \times \R_+^*$, $X\sim\operatorname{BPRC}(r,\kappa,z_0^{(1)},\phi^{a,\beta},\lambda_2,\mu_2,\beta_{21})$, and
    $$ \mathfrak{f}:=\lambda_2-\mu_2+r\E[\ln \Theta^{(2)}]-\beta_{21}\frac{a+r\E[\ln \Theta^{(1)}]}{\beta}. $$
Let $K \in \N$, $\eps_K>0$ be such that $\eps_K K \to \infty$ and $\eps_K=o(1)$ as $K$ goes to $\infty$, and if $\mathfrak{f}>0$, assume also that $\ln \eps_K =o(\ln K)$. Let $T_{\eps_K}^{X,K}:=\inf\{T_n\geq 0, X_n\geq K \eps_K\}$ and $T_{0}^{X}:=\inf\{T_n \geq 0, X_n=0\}$.
    Then we have the convergence in law
    $$\left(\frac{T_0^{X}\wedge {T}^{X,K}_{\eps_K}}{\ln K},\ind_{\{{T}_0^{X}>{T}^{X,K}_{\eps_K}\}}\right)\overset{\mathcal{L}}{\longrightarrow}\left\{\begin{array}{ll}
    B \cdot\left(\mathfrak{f}^{-1},1\right) &\text{if } \mathfrak{f}>0,\\
    (0,0)&\text{if }\mathfrak{f}<0.
    \end{array}\right.$$
    where $B$ is a Bernoulli variable with parameter $p=\P(\lim X_n=\infty)>0$ when $\mathfrak{f}>0$.
    \end{prop}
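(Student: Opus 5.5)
The plan is to read Proposition~\ref{prop_Kersting_aleatoire} as a consequence of \cite[Theorem 1]{Kersting19}. Lemma~\ref{lemma_condA} gives condition (A) for $X$, so we may work conditionally on the environment $E$. We then have to identify three things: the almost-sure growth rate of the conditional mean, the survival probability, and the hitting times of the levels $0$ and $K\eps_K$. By Lemma~\ref{lemma_condA}, conditionally on $E$ the offspring law at generation $n$ is linear fractional with mean $m_{E_n}=\theta_n e^{\psi_{E_n}(t_n)}$. Hence
$$\mu_n:=\E[X_n\mid E]=\prod_{k<n}m_{E_k}.$$
Using the flow property, $z_{k+1}=\Theta^{(1)}_{k+1}\phi_{t_k}(z_k)$ is exactly the value just after the $(k+1)$th catastrophe of the solution $Z^{a,\beta}$ of \eqref{eqdep-abeta}. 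Therefore
$$\ln\mu_n=\sum_{k\le n-1}\ln\theta_k+(\lambda_2-\mu_2)T_n-\beta_{21}\int_0^{T_n}Z^{a,\beta}_s\,ds .$$
Dividing by $T_n$ and using $n/T_n\to r$, the law of large numbers and \eqref{eq:cesaro}, we get $\ln\mu_n/T_n\to\mathfrak f$ almost surely. For this step we assume $a+r\E[\ln\Theta^{(1)}]>0$; otherwise the positive part in \eqref{eq:cesaro} appears and the statement should be read with it.

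\textbf{Extinction/survival dichotomy.} Kersting's theorem gives $\P(X_n\to0\mid E)=1$ if and only if $\sum_n \nu_n/\mu_n=\infty$, where $\nu_n$ is the normalized second factorial moment. For linear fractional laws this sum is comparable to $\sum_n b_{E_n}/\mu_{n+1}$, which yields exactly the formula \eqref{formule_p}. If $\mathfrak f<0$, then $\mu_n\to0$ exponentially and extinction is almost sure. The extinction time $T^X_0$ is then a.s.\ finite, independently of $K$, so $T^X_0/\ln K\to0$. Moreover, since $K\eps_K\to\infty$ and $\sup_n X_n<\infty$ a.s., we get $T^X_0<T^{X,K}_{\eps_K}$ with probability tending to $1$. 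If $\mathfrak f>0$, we check that $\sum_n b_{E_n}/\mu_{n+1}<\infty$ a.s. Since $V_{E_n}(t_n)\le\lambda_2 t_n e^{c t_n}$, the $b_{E_n}$ have only subexponential growth, while $\mu_n$ grows like $e^{\mathfrak f T_n}$. Positivity of $p$ then follows because the conditional survival probability is positive a.s.

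\textbf{Hitting time on survival.} On survival, Kersting's Theorem 1 gives $X_n/\mu_n\to W$ with $W>0$ a.s. Hence $\ln X_n/T_n\to\mathfrak f$. The first $n$ with $X_n\ge K\eps_K$ then satisfies $T_n\sim\ln(K\eps_K)/\mathfrak f\sim\ln K/\mathfrak f$, using $\ln\eps_K=o(\ln K)$. Overshoot is harmless, because the inter-catastrophe times are $O(\ln n)$ a.s., so consecutive $T_n$ differ by $o(\ln K)$. On extinction, $T^X_0$ is finite and does not depend on $K$, giving the $(0,0)$ component. Combining the two events gives convergence in law to $B\cdot(\mathfrak f^{-1},1)$ with $p=\P(\lim X_n=\infty)$.

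\textbf{Main obstacle.} The delicate point is the ergodic control of the environment. First, Kersting's summability criterion needs $\ln b_{E_n}=o(T_n)$ and $\ln\theta_n=o(n)$ almost surely; both follow from Borel--Cantelli using $\E|\ln\Theta^{(2)}|<\infty$ and exponential tails of the $t_n$. Second, the time-average of $Z^{a,\beta}$ along catastrophe times has to be identified, which requires the ergodic result \eqref{eq:cesaro} for the non-Markov environment chain $E$.
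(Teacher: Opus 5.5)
Your proposal is correct and follows essentially the same route as the paper. Both arguments use condition (A) from Lemma~\ref{lemma_condA} to invoke \cite[Theorem 1]{Kersting19}, and both derive $\ln \E[X_n\mid E]\sim \mathfrak{f}\,T_n$ from the law of large numbers and \eqref{eq:cesaro}. Both get a.s.\ extinction when the conditional mean tends to $0$, prove exponential decay of the second-moment series when $\mathfrak{f}>0$, and use the martingale limit $W>0$ to obtain $T^{X,K}_{\eps_K}\sim \ln K/\mathfrak{f}$. The only cosmetic difference is that you use $b_{E_n}/\mu_{n+1}$ where the paper uses $R_n/M_{n-1}$; these are equivalent for linear fractional laws.

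Your caveat is a legitimate correction to the statement: because \eqref{eq:cesaro} carries a positive part, the statement as written implicitly requires $a+r\E[\ln\Theta^{(1)}]>0$, which does hold in every application in the paper. Your explicit treatment of the overshoot via $t_n=O(\ln n)$ also spells out a step the paper passes over quickly.
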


\begin{proof}
Let us first introduce some notation for the sake of readability. For $1 \leq k \leq n$,
\begin{equation} \label{notations_mMR}
  M_n := \prod_{k=1}^n m_{E_k} \quad \text{and} \quad R_n:=\frac{\var[Y_{n,1,E_n}|E_n]}{\E[Y_{n,1,E_n}|E_n]^2}.
\end{equation}
\noindent $\bullet$ Case  $\mathfrak{f}<0$. According to Lemma \ref{lemma_condA}, we can apply \cite[Theorem 1]{Kersting19} and we obtain from condition (iv) the following inclusion of events of the $\sigma$-algebra $\sigma(E)$:
$$\left\{\lim_{n\to \infty}M_n =0\right\} = \left\{\lim_{n\to \infty}\E[X_n|E]=0\right\} \subset \big\{\E[\ind\{\lim X_n=0\}|E]=1\big\}.$$
Let us show that the event on the left-hand side has probability $1$. For $n \in \N$ we have
\begin{align} \label{as_equiv}
    \ln M_n&=\sum_{k=1}^n \ln \Theta_k^{(2)} +\int_{0}^{T_n}(a_2-\beta_{21}Z^{a,\beta}_s)ds\nonumber 
    \\&\sim n\left(\E[\ln \Theta^{(2)}]+\frac{1}{r}\left(a_2-\beta_{21}\frac{a + r\E[\ln\Theta^{(1)}]}{\beta}\right)\right)=n\frac{\mathfrak{f}}{r}\to -\infty,
\end{align}
where we used the law of large numbers, and a combination of \eqref{Z_explicite} and \eqref{eq:cesaro} on $Z^{a,\beta}$. This shows that $\P(X_n\to0)=1$ and implies that almost surely $\sup X_n<\infty$. Moreover $\{T_{\eps_K}^{X,K}<\infty\}=\{\sup X_n\geq K\eps_K\}$, and $T_0^X$ does not depend on $K$ so that, in probability, $\lim T_0^X/\ln K=0$. We deduce that:
$$\lim_{K\to\infty}\left(\frac{{T}^X_0\wedge {T}^{X,K}_{\eps_K}}{\ln K},\ind_{\{{T}^X_0>{T}^{X,K}_{\eps_K}\}}\right)=(0,0), \quad \text{in probability}.$$

\noindent $\bullet$ Case  $\mathfrak{f}>0$. Recall notations given in \eqref{notations_mMR}.
Using condition (vii) in  \cite[Theorem 1]{Kersting19} yields:
$$\P(\lim X_n=\infty)>0 \iff \P\left(\sum_{k=1}^{\infty}\frac{R_k}{M_{k-1}}<\infty\right)>0.$$
From \eqref{eq:kendall} we may compute 
$$ R_n = \frac{m_{E_n}(2b_{E_n}+1)-m^2_{E_n}}{m^2_{E_n}} = \frac{1}{\Theta^{(2)}_n} \left( 2\lambda_2 \int_0^{T_n-T_{n-1}}e^{-\psi_{E_n}(s)}ds + e^{-\psi_{E_n}(T_n-T_{n-1})}\right)-1. $$

Using that $\psi_{E_n}(t)\geq \delta t$ for every $t\geq 0$, where
\begin{equation*} \label{def_delta}
  \delta:=\Big(a_2-\beta_{21}\big((a/\beta)\vee z_0^{(1)}\big)\Big)\wedge(-1)\;<\;0
\end{equation*}
(any strictly negative upper bound would do), we get
$$\int_0^{t}e^{-\psi_{E_n}(s)}ds\leq\int_0^{t}e^{|\delta| s}ds\leq\frac{e^{|\delta| t}}{|\delta|}
\qquad\text{and}\qquad e^{-\psi_{E_n}(t)}\leq e^{|\delta| t},$$
so that, with $C:=2\lambda_2/|\delta|+1$,
$$R_n\leq \frac{C}{\Theta^{(2)}_n}e^{|\delta|(T_n-T_{n-1})}.$$
Combining this last inequality with \eqref{as_equiv}, we get that almost surely for $n$ large enough, 
$$ \ln \Big( \frac{R_n}{M_{n-1}} \Big) \leq - n \Big[ \frac{\mathfrak{f}}{2r}- \frac{1}{n}\big(|\delta|(T_n-T_{n-1})+ \ln C+ |\ln \Theta^{(2)}_n|\big) \Big].  $$
Since $(T_n-T_{n-1},\ln{\Theta^{(2)}_n},n \in \N)$ is a sequence of independent identically distributed variables with finite means, $\limsup n^{-1}[|\delta|(T_n-T_{n-1})+|\ln \Theta^{(2)}_n|]=0$, and almost surely, $R_n/M_{n-1}$ has exponential decay: condition (vii) is satisfied.

The process $(X_{n}/M_n, n \in \N)$ is a non-negative martingale, thus converges almost surely to some random variable $W\geq 0$, which satisfies $\P(W=0)=\P(\lim X_n=0)<1$ according to \cite[Theorem 2]{Kersting19}. We deduce that
    $$\lim_{K\to \infty}\P(T_{\eps_K}^{X,K}<T_0^X)=\P(W>0).$$
Indeed, $\lim_{K\to \infty}\P(T_{\eps_K}^{X,K}<T_0^X)=\P(\limsup X_n=\infty),$
and by the previous dichotomy, we get that $\P(\limsup X_n=\infty)=\P(W>0)>0$. This gives the convergence in law of the second coordinate to a Bernoulli random variable of parameter $p:=\P(\lim X_n= \infty)>0$. To show the convergence of the pair given in the proposition, similarly to the case $\mathfrak{f}<0$, we first remark that $T_0^X$ does not depend on $K$, so that
$$\ind_{\{W=0\}}\frac{T_0^X\wedge T_{\eps_K}^{X,K}}{\ln K}=\ind_{\{T_0^X<\infty\}}\frac{T_0^X \wedge T^{X,K}_{\eps_K}}{\ln K}\to 0 \quad \text{in probability}.$$
Now under the event $W>0$, we know that $T_0^X=\infty$, and we just have to estimate $T_{\eps_K}^{X,K}$. Almost sure convergence to $W$ gives
$$ \ind_{\{W>0\}}\big( \ln X_n - \ln M_n \big) \xrightarrow[n\to\infty]{}\ind_{\{W>0\}} \ln W \quad \text{almost surely.}$$
Adding \eqref{as_equiv} allows to conclude that
$$ \lim_{K \to \infty }\ind_{\{W>0\}}\frac{\ln (\eps_K K)}{T_{\eps_K}^{X,K}} = \lim_{n \to \infty }\ind_{\{W>0\}}\frac{\ln X_n}{T_n} = \ind_{\{W>0\}} \mathfrak{f} \quad \text{a.s.}$$
This ends the proof.
\end{proof}

\subsubsection{Proof of Proposition \ref{possible_invasion}} \label{subsec_proofinvasion}

The proof consists in sandwiching $\Gamma^{(2,K)}$ between two branching processes in random environments belonging to the class introduced above, and which are sufficiently close to each other to exhibit the same asymptotic behavior. To this aim, we  define $(X^v)_{v\in (-\eps_0,\eps_0)}$ as a family of BPVE , living on the same probability space as $\Gamma^{(K)}$, such that for any $\eps\in(0,\eps_0)$ and for a sufficiently long-time $X^{-\eps}_n\leq\Gamma^{(2,K)}_{T_n}\leq X^{\eps}_n$, where $T_n$ (defined page \pageref{def_triplet}) is the time of the $n$th catastrophe. 

Recall that $\Gamma^{(1,K)}$ and $\Gamma^{(2,K)}$ are defined in \eqref{strongSDE} using five independent Poisson point processes.
%and the notation $S_i(ds,d\theta^{(i)})=\int_0^1\bar N_i(ds,d\theta^{(i)},dp)$ which only concerns times and intensities of the jumps affecting $\Gamma^{(i,K)}$ for $i\in\{1,2\}$.
We first introduce bounds for the process $\Gamma^{(1,K)}$. For $a>0$, we define the process $Z^{(1,a)}$ to be the strong solution to
$$Z^{(1,a)}_t = Z_0^{(1,a)}+ \int_0^tZ^{(1,a)}_s\left(a - \beta_{11}Z^{(1,a)}_s\right)ds + \int_0^t \int_0^1 (\theta^{(1)}-1) Z^{(1,a)}_{s-}S_1(ds,d\theta^{(1)}).$$

 Recall the notation $\widetilde{T}^K_{\eps}$ in \eqref{TKtilde}. The following result is a consequence of Proposition \ref{prop_cvglongue}.

\begin{lemma} \label{lem_cvglongue}
   Assume that \eqref{H3} holds. For any $\tilde{\alpha}>1$, $\eps,\eps'>0$ such that $\beta_{12}^{-1}\overline{a}_1-\eps>0$, we have
    $$ \lim_{K \to \infty}\P\left( \forall t<\widetilde{T}_{\eps}^K\wedge\ln^{\tilde{\alpha}}K,~Z^{(1,a_1-\beta_{12}\eps)}_t-\eps'\leq\frac{\Gamma^{(1,K)}_t}{K}\leq Z^{(1,a_1)}_t+\eps'\right)=1.$$
\end{lemma}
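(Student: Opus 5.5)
The plan is a monotone coupling: sandwich $\Gamma^{(1,K)}/K$ between two monomorphic processes, then transfer the closeness given by Proposition~\ref{prop_cvglongue}. All processes are built on the same Poisson point processes $N_1^\pm$ and the same catastrophe measure $\bar N_1$. Up to time $\widetilde T^K_\eps$ one has $\Gamma^{(2,K)}_t<\lceil K\eps\rceil$, so the type-$1$ death rate per individual satisfies
\[
\mu_1+\tfrac{\beta_{11}}{K}\Gamma^{(1,K)}_t\;\le\;\nu^1_t\;\le\;\mu_1+\tfrac{\beta_{11}}{K}\Gamma^{(1,K)}_t+\beta_{12}\eps+O(1/K).
\]
We introduce two monomorphic birth--death processes with catastrophes, $\Gamma^{+,K}$ and $\Gamma^{-,K}$, both started at $\lfloor Z^{(1)}_0K\rfloor$. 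The first has death rate $\mu_1+\beta_{11}\Gamma/K$; the second has death rate $\mu_1+\beta_{12}\eps'' +\beta_{11}\Gamma/K$ for a slightly enlarged $\eps''$, so that its growth rate is $a_1-\beta_{12}\eps''$.

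\emph{Step 1: ordering.} We realise the three processes with the standard thinning construction from \eqref{strongSDE}. Births are thinned from $N_1^+$ up to level $\lambda_1\Gamma$, deaths from $N_1^-$ up to level (rate)$\times\Gamma$, and catastrophes use the same $F^{-1}_{\mathrm{Bin}}(p)$, which is monotone in $n$. With this construction $\Gamma^{-,K}_t\le\Gamma^{(1,K)}_t\le\Gamma^{+,K}_t$ holds for $t<\widetilde T^K_\eps$ almost surely. The standard argument is that at each atom the order cannot be violated. When two processes are equal, the larger one has the larger birth threshold and the smaller death threshold, so the order is kept. Catastrophes preserve order because the quantile function of $\mathrm{Bin}(n,\theta)$ is nondecreasing in $n$ for fixed $p$ and $\theta$, since binomials are stochastically monotone and the coupling is realised by quantiles.

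\emph{Step 2: long-time approximation.} We apply Proposition~\ref{prop_cvglongue} to $\Gamma^{+,K}$, which is exactly the monomorphic model with parameters $(\lambda_1,\mu_1,\beta_{11})$. Choosing $\alpha$ with $\alpha-\eps_0=\tilde\alpha$ gives
\[
\sup_{t\le\ln^{\tilde\alpha}K}\bigl|\Gamma^{+,K}_t/K-Z^{(1,a_1)}_t\bigr|\le K^{(3\eps_0-1)/2}\le\eps'
\]
with probability tending to one. For $\Gamma^{-,K}$ we apply the same proposition with $\mu_1$ replaced by $\mu_1+\beta_{12}\eps''$. This is legitimate because that proposition only uses the monomorphic structure and \eqref{H3}. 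Its averaged growth rate $\bar a_1-\beta_{12}\eps''$ is positive by the hypothesis $\beta_{12}^{-1}\bar a_1-\eps>0$ (read as $\bar a_1-\beta_{12}\eps>0$), with $\eps''$ close to $\eps$. The role of Assumption~\ref{ass:a_bar_positive} in that proof, via $\Psi(p)>0$, is therefore preserved. This yields closeness to $Z^{(1,a_1-\beta_{12}\eps'')}$.

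\emph{Step 3: absorbing the rounding.} Finally, $Z^{(1,a_1-\beta_{12}\eps'')}$ and $Z^{(1,a_1-\beta_{12}\eps)}$ need to be compared. I would avoid this issue by choosing the lower process with death rate $\mu_1+\beta_{12}\lceil K\eps\rceil/K$, which is a deterministic drift perturbation of order $1/K$. From the explicit formula \eqref{Z_explicite}, the solution is monotone in $a$, and the difference over times $\ln^{\tilde\alpha}K$ is controlled by $O(\ln^{\tilde\alpha}K/K)$ times a bounded factor. Alternatively, one can simply compare with $a_1-\beta_{12}\eps$ with a $K$-dependent parameter in Proposition~\ref{prop_cvglongue}, whose constants are uniform.

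I expect the main obstacle to be Step~2 for the lower process, namely checking that Proposition~\ref{prop_cvglongue} applies uniformly with the shifted death rate and that the two convergences combine on the event $\{t<\widetilde T^K_\eps\}$. Intersecting the two high-probability events with the almost-sure coupling inequality of Step~1 then gives the claimed probability tending to $1$.
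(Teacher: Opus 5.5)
Your proposal is correct and is essentially the paper's proof: you sandwich $\Gamma^{(1,K)}$, via the common Poisson construction, between the monomorphic processes with extra death rate $0$ and $\beta_{12}\eps$, then apply Proposition~\ref{prop_cvglongue} to each, the lower one having averaged growth rate $\overline{a}_1-\beta_{12}\eps>0$. Your Step~3 is unnecessary: since $\Gamma^{(2,K)}$ moves upward only by unit jumps, before $\widetilde{T}^K_\eps$ one has $\Gamma^{(2,K)}_t\le\lceil K\eps\rceil-1<K\eps$, so the fixed rate $\beta_{12}\eps$ already dominates the competition term and you need neither $K$-dependent parameters nor uniformity of Proposition~\ref{prop_cvglongue}.
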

\begin{proof}
We can sandwich the process $\Gamma^{(1,K)}$ (which is not markovian) between two Markov processes $\Gamma^{u}$ and $\Gamma^{l}$ solution to
$$\begin{aligned}
\Gamma_t^{k}
=&\Gamma^{(1,K)}_0+\int_0^t
\left(\int_0^{\lambda_1\Gamma^{k}_{s-}}N_1^+(ds,dx)-\int_0^{(\mu_1 +\beta_{11}\Gamma_{s-}^{k}/K+\delta_{k=l}\beta_{12}\eps)\Gamma^{k}_{s-}}N_1^-(ds,dx)\right)\\
&+\int_0^t\int_0^1\int_0^1\Big(F^{-1}_{\text{Bin}(\Gamma^{k}_{s-},\theta^{(1)})}(p) - \Gamma^{k}_{s-}\Big)\bar{N}_1(ds,d\theta^{(1)},dp),
\end{aligned}$$
where $\delta_{k=l}$ is $1$ if $k=l$ and $0$ if $k=u$. By definition, $\forall t< \widetilde{T}_{\eps}^K,~\Gamma_t^l\leq \Gamma^{(1,K)}_t\leq\Gamma^u_t$ almost surely. Applying Proposition \ref{prop_cvglongue} to the two processes $\Gamma^u$ and $\Gamma^l$ with $\alpha>1$, we get that for any $\eps''>0$, there exists $K$ large enough for which with probability at least $1-\eps''$ we have:
$$\begin{aligned}
    \forall t\leq (\ln K)^{\alpha-\eps''},\quad & Z^{(1,a_1-\beta_{12}\eps)}_t-K^{2\eps''-1/2}\leq\Gamma^l_t/K \leq \Gamma^u_t/K\leq Z^{(1,a_1)}_t+K^{2\eps''-1/2}.
    \end{aligned}$$
%where $Z^{(1,a_1)}$ (respectively $Z^{(1,a_1-\beta_{12}\eps)}$) is the PDMP associated to $\Gamma^u$ (respectively $\Gamma^l$) in the sense of Equations \eqref{strongSDE} and \eqref{eqdep} when $\Gamma^{(2,K)}\equiv0$.
This concludes the proof.
\end{proof}

We now suppose $\mu_2>0$ and fix
\begin{equation}\label{def_eps0}
0<\eps_0<\frac{\overline{a}_1}{\beta_{12}}\wedge\frac{\mu_2}{\beta_{21}}\wedge \frac{|\mathfrak{f}_{21}|}{\beta_{21} + \beta_{22} + \beta_{21}\beta_{12}\beta_{11}^{-1}}.
\end{equation}
The first two constraints ensure that $\mu^v$ defined in \eqref{def_mu} below is well-defined. The last constraint ensures that $\mathfrak{f}_{v}$, defined in \eqref{fitness_BPRE} below, have the same sign as $\mathfrak{f}_{21}$ for every $v\in [-\eps_0,\eps_0]$.
%this is possible because $v\mapsto\mathfrak{f}_{v}$ is continuous, increasing, and equal to $\mathfrak{f}_{21}\neq0$ at $v=0$. The second constraint in \eqref{def_eps0} ensures that the rates $\mu^{v}$ defined in \eqref{def_mu} below are nonnegative (when $\mu_2=0$, one replaces $\mu^{v}$ by $\mu^{v}\vee0$ in \eqref{def_mu}, and none of the arguments below is modified).
For $v\in (-\eps_0,\eps_0)$ and $x_0 \in \N$, let $(X^{v}_t, t\geq0)$ be the strong solution to
\begin{align}\nonumber\label{def_Xeps}
    X_t^{v} &= x_0+\int_0^t
\left(\int_0^{\lambda_2X^{v}_{s-}}N_2^+(ds,dx)-\int_0^{\mu^{v}_sX^{v}_{s-}}N_2^-(ds,dx)\right)\\
&+\int_0^t\int_0^1\int_0^1\Big(F^{-1}_{\text{Bin}(X^{v}_{s-},\theta^{(2)})}(p) - X^{v}_{s-}\Big)\bar{N}_2(ds,d\theta^{(2)},dp),
\end{align}
with $\bar{N}_2$ and $N_2^\pm$ introduced in \eqref{strongSDE} and \begin{equation} \label{def_mu}
\mu^{v}_s=\left(\mu_2+ \ind_{\{v \geq 0\}} \Big[\beta_{21}\big(Z^{(1,a_1-\beta_{12}v)}_s-v\big)\Big]+  \ind_{\{v < 0\}} \Big[ |v|\beta_{22}+\beta_{21}\big(Z^{(1,a_1)}_s+|v|\big)\Big]\right).
\end{equation}
Eq. \eqref{def_eps0} ensures that $\mu_s^v\geq 0$ for all $v\in (-\eps_0, \eps_0)$ if $\mu_2>0$, but the particular case $\mu_2=0$ requires to be considered separately since we cannot define $\eps_0>0$ such that $\mu^{v}_s\geq0$ for any $0<v<\eps_0$. Consequently, we will first show Proposition \ref{possible_invasion} for $\mu_2>0$, and extend the result to the case $\mu_2=0$ noticing that a process $\Gamma^{(K)}$ with $\mu_2=0$ is the almost sure uniform limit of a sequence of processes satisfying $\mu_2\to0$.\\

All these processes are constructed with the same Poisson point measures, which entails the following properties.

\begin{lemma}\label{lem_monotone_X}
Suppose $\mu_2>0$. Almost surely, $(i)$ for every $t\geq0$ the map $v\mapsto X^{v}_t$ is non-decreasing on $(-\eps_0,\eps_0)$; $(ii)$ $\sup_{t\leq T}X^{v}_t<\infty$ for every $T<\infty$ and every $v\in(-\eps_0,\eps_0)$.
\end{lemma}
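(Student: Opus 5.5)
The monotonicity in $v$ follows from a pathwise comparison, since all the $X^{v}$ are driven by the same Poisson measures $N_2^{\pm}$ and $\bar N_2$. The first step is to show that the death rate $\mu^{v}_s$ is non-increasing in $v$ for every $s$. For $v<0$, \eqref{def_mu} gives $\mu^{v}_s=\mu_2+|v|\beta_{22}+\beta_{21}(Z^{(1,a_1)}_s+|v|)$. This is non-increasing in $v$, and it is at least $\mu_2+\beta_{21}Z^{(1,a_1)}_s$.

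For $v\geq0$, the rate is $\mu_2+\beta_{21}(Z^{(1,a_1-\beta_{12}v)}_s-v)$. The process $Z^{(1,a)}$ is driven by the same $S_1$ for every $a$. By the explicit formula \eqref{Z_explicite}, with $K^{(1)}$ replaced by $K^{(1)}+(a-a_1)t$, it is non-decreasing in $a$. Indeed, $1/Z^{(1,a)}_t=e^{-L_t}/z_0+\beta_{11}\int_0^t e^{-(L_t-L_s)}ds$, where $L_t-L_s$ is increasing in $a$. Hence $Z^{(1,a_1-\beta_{12}v)}_s$ is non-increasing in $v$, and $-v$ is decreasing, so the rate is non-increasing on $[0,\eps_0)$. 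Taking $a=a_1$ gives $\mu^{0}_s=\mu_2+\beta_{21}Z^{(1,a_1)}_s$, which matches the left limit, so monotonicity holds across $v=0$.

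Next comes the coupling. Take $v<v'$ and assume $X^{v}_{s-}\le X^{v'}_{s-}$. A birth atom $(s,x)$ with $x\le\lambda_2X^{v}_{s-}$ also lies below $\lambda_2X^{v'}_{s-}$. A death atom with $x\le\mu^{v'}_sX^{v'}_{s-}$ is a death for $X^{v'}$ but possibly not for $X^{v}$. Conversely, if $X^{v}$ has a death while $X^{v'}$ does not, then $x\le\mu^{v}_sX^{v}_{s-}$, while $x>\mu^{v'}_sX^{v'}_{s-}$. In the case of equality $X^{v}_{s-}=X^{v'}_{s-}$, this is impossible because $\mu^{v}_s\ge\mu^{v'}_s$. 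In the case of strict inequality, the jump size is $1$, so the order persists. Births from the lower process that are not births of the upper one cannot occur, since these sets are nested. The case of strict inequality with a birth for $X^{v}$ and a death for $X^{v'}$ at the same atom cannot happen, because $N_2^+$ and $N_2^-$ are distinct independent measures and so a.s. have no common atoms.

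At catastrophes, both processes use the same $(\theta^{(2)},p)$. The map $n\mapsto F^{-1}_{\text{Bin}(n,\theta)}(p)$ is non-decreasing, since binomials are stochastically ordered in $n$ and the same uniform $p$ is used for both. So the order is preserved at catastrophe times too. Induction on the a.s. locally finite sequence of relevant jump times then gives $X^{v}_t\le X^{v'}_t$ for all $t$, simultaneously for all $v<v'$. The countable-union issue is handled because the event is a single pathwise argument: it uses only the a.s. properties of the point processes, which do not depend on $v$.

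For (ii), $X^{v}$ is dominated by a pure birth (Yule) process with rate $\lambda_2$ driven by $N_2^+$ and started at $x_0$. Deaths and binomial thinning only decrease the size, and the same nesting argument applies with $\mu\equiv0$ and no catastrophes. A Yule process does not explode, so $\sup_{t\le T}X^{v}_t<\infty$ a.s. This also justifies that the strong solutions are well-defined, because $\mu^{v}_s$ is bounded on $[0,T]$, as $Z^{(1,a)}$ is bounded.

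The main obstacle is the case analysis at simultaneous atoms in the comparison step. It is resolved by the unit jump sizes, together with the rate monotonicity established in the first step.
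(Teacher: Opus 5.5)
Your route is the paper's: order the death rates $\mu^{v}_s$ in $v$, couple all the $X^{v}$ through the same $N_2^{\pm},\bar N_2$, and dominate by a Yule process for $(ii)$. Your explicit-formula argument for the monotonicity of $a\mapsto Z^{(1,a)}$ is a valid, fully pathwise substitute for the paper's drift-comparison argument. The same holds for your remark that one null set serves all $v$, which the paper leaves implicit.

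There is, however, an error in the death-atom analysis, the step you single out as the main obstacle: the two cases are swapped. You correctly derived $\mu^{v}_s\geq\mu^{v'}_s$ for $v<v'$, so the \emph{lower} process $X^{v}$ has the \emph{larger} death rate. Take a time with $X^{v}_{s-}=X^{v'}_{s-}=n$.

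\begin{itemize}
\item You declare impossible the configuration ``$X^{v}$ dies and $X^{v'}$ does not'', i.e.\ $\mu^{v'}_sn<x\leq\mu^{v}_sn$. This is in fact possible whenever $\mu^{v}_s>\mu^{v'}_s$. It is harmless, since it only opens a gap.
\item The configuration that would break the order is ``$X^{v'}$ dies and $X^{v}$ does not'', i.e.\ $\mu^{v}_sn<x\leq\mu^{v'}_sn$. This is the one that is empty, precisely because $\mu^{v'}_sn\leq\mu^{v}_sn$.
\end{itemize}

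Your text never excludes this second configuration at equal sizes. It even asserts that a death of $X^{v'}$ is ``possibly not'' a death of $X^{v}$, which is false when the sizes are equal. Exchange the roles of the two configurations and keep your unit-jump argument for $X^{v}_{s-}<X^{v'}_{s-}$, which is correct in either configuration. With that fix the comparison goes through and the proof is complete.
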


\begin{proof}
$(i)$ Let $-\eps_0<v\leq v'<\eps_0$. If $0\leq v\leq v'$, the drift of \eqref{eqdep-abeta} is non-decreasing in $a$, so that a comparison argument gives $Z^{(1,a_1-\beta_{12}v')}\leq Z^{(1,a_1-\beta_{12}v)}$ almost surely and we get $\mu^{v'}_s\leq\mu^{v}_s$ from \eqref{def_mu}. From \eqref{def_mu}, the same inequality holds if $v\leq v'\leq0$, and, combining both cases, if $v\leq 0\leq v'$. The processes $X^{v}$ and $X^{v'}$ have the same birth rate, the same effect of catastrophes, and are driven by the same Poisson point measures, so that a pathwise comparison argument entails $X^{v}_t\leq X^{v'}_t$ for every $t\geq0$ if $v\leq v'$. 

$(ii)$ Neglecting deaths and catastrophes, $X^{v}$ is dominated by a Yule process of rate $\lambda_2$ started at $x_0$, which is almost surely finite at all times. As $X^{v}$ is càdlàg with values in $\N$, its supremum on any compact time interval is finite almost surely.
\end{proof}

An application of Lemma \ref{lem_cvglongue} with $\eps'=\eps$ ensures that the death rates of $X^{-\eps},\ \Gamma^{(2,K)},\ X^{\eps}$ stay ordered. The above coupling thus entails that for any $\eps\in(0,\eps_0)$, $\tilde\alpha>1$, and $x_0\in\N$,
\begin{equation} \label{prop_Xulbound} \lim_{K \to \infty}\P\big(\forall t<\widetilde{T}_{\eps}^K\wedge\ln^{\tilde\alpha}K,\ X^{-\eps}_t\leq \Gamma^{(2,K)}_t\leq X^{\eps}_t\ \big|\ X^{-\eps}_0=\Gamma^{(2,K)}_0=X^{\eps}_0=x_0\big)=1.
\end{equation}

Finally, a direct consequence of the construction of the sandwiching processes is that their discretizations belong to the class of BPVE introduced in Section \ref{subsec_introofbpre}. More precisely,
    \begin{itemize}
        \item if $v\leq0$:
    $$(X^{v}_{T_n-},n\in\N)\sim\operatorname{BPRC}(r,\kappa,z^{(1)}_0,\phi^{a_{1},\beta_{11}},\lambda_2,\mu_2+(\beta_{22}+\beta_{21})|v|,\beta_{21}),$$
    \item if $v\geq0$:
    $$(X^{v}_{T_n-},n\in\N)\sim\operatorname{BPRC}(r,\kappa,z^{(1)}_0,\phi^{a_{1}-\beta_{12}v,\beta_{11}},\lambda_2,\mu_2-\beta_{21}v,\beta_{21}).$$
    \end{itemize}
We denote $E^v$ the random catastrophes process associated to $X^{v}$ and $\mathfrak{f}_{v}$ its fitness, given by:
\begin{equation}
\label{fitness_BPRE}
\mathfrak{f}_{v}=\left\{\begin{array}{cc}
    \mathfrak{f}_{21}-(\beta_{22}+\beta_{21})|v| & \text{if } v \leq 0,\\
    \mathfrak{f}_{21}+\big(\beta_{21}+\frac{\beta_{21}\beta_{12}}{\beta_{11}}\big)v & \text{if } v\geq 0.
\end{array}\right.
\end{equation}
In particular $\mathfrak{f}_0=\mathfrak{f}_{21}$, and $v\mapsto\mathfrak{f}_{v}$ is continuous and increasing; notice from \eqref{def_eps0} that $\eps_0$ has been chosen so that $\mathfrak{f}_{v}$ has the sign of $\mathfrak{f}_{21}$ for every $v\in(-\eps_0,\eps_0)$.

The linear fractional structure exhibited in the proof of Lemma \ref{lemma_condA} allows us to compute explicitly the survival probability of $X^{v}$ conditionally on the catastrophes, and thus to compare the invasion probabilities associated with different values of $v$. Recall the $\sigma$-field $\mathcal{G}$ generated by the catastrophes, introduced in the proof of Lemma \ref{lem_Kurtz}, and recall the notation $E_k=(t_k,\theta_k,z_k)$ of Section \ref{subsec_introofbpre}, where $t_k=T_{k+1}-T_k$ and $\theta_k=\Theta^{(2)}_k$ (with $T_0=0$ and $\theta_0=1$). For $v\in(-\eps_0,\eps_0)$ and $k\geq0$, we denote by $(m^{v}_k,b^{v}_k)$ the parameters of the linear fractional law $\mathcal{L}(E^{v}_k)$ obtained in the proof of Lemma \ref{lemma_condA}, that is
$$\begin{aligned}&m^{v}_k=\theta_ke^{\psi^{v}_k(t_k)},&b^{v}_k=e^{\psi^{v}_k(t_k)}V^{v}_k(t_k),\\
&\psi^{v}_k(t)=\int_0^t\big(\lambda_2-\mu^{v}_{T_k+u}\big)du,
&V^{v}_k(t)=\lambda_2\int_0^te^{-\psi^{v}_k(u)}du,\end{aligned}$$
and we denote $\mathcal{M}^{v}_j:=\prod_{k=0}^{j}m^{v}_k$, which coincides with the product $M_j$ of \eqref{notations_mMR} up to the factor $m^{v}_0$. Since the $\psi^{v}_k$ concatenate, $\mathcal{M}^{v}_.$ is defined as in \eqref{formule_p_bis}.

\begin{lemma}\label{lem_continuity_p}
Suppose $\mu_2>0$. Assume $\mathfrak{f}_{-\eps_0}>0$. Then, $v\mapsto p_{v}=\P(\lim X^v=\infty)$ is continuous at $0$ on $(-\eps_0,\eps_0)$.
\end{lemma}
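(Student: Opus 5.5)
The plan is to write the quenched survival probability of $X^v$ explicitly, using the linear fractional structure, and then let $v\to0$ under the expectation by dominated convergence. For a branching process in varying environment with linear fractional offspring laws of parameters $(m_k,b_k)$, iterating \eqref{shape_linfrac} gives
$$\P\big(X_{n+1}>0\mid \mathcal{G}\big)=\Big(\frac{1}{\mathcal{M}_n}+\sum_{j=0}^{n}\frac{b_j}{\mathcal{M}_j}\Big)^{-1}.$$
This is standard: compositions of linear fractional maps remain linear fractional, with $1/(1-f_0\circ\cdots\circ f_n(0))$ given by that sum. Under $\mathfrak f_v>0$ (so in particular for $v\ge -\eps_0$), \eqref{as_equiv} gives $\mathcal{M}^v_n\to\infty$ almost surely, and therefore
$$p_v=\E\Big[\Big(\sum_{j\ge0}\frac{b^v_j}{\mathcal{M}^v_j}\Big)^{-1}\Big].$$
Here survival coincides with $X^v\to\infty$ by Kersting's dichotomy, as already used in the proof of Proposition \ref{prop_Kersting_aleatoire}. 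With $v=0$ this is exactly \eqref{formule_p}.

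The continuity argument then goes as follows.

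\emph{Pointwise convergence of each term.} For each fixed $j$, the quantities $m^v_k$ and $b^v_k$ with $k\le j$ depend continuously on $v$. This is because $\mu^v_s\to\mu^0_s$ uniformly on compacts: the map $a\mapsto Z^{(1,a)}$ is continuous, by the explicit formula \eqref{Z_explicite} applied with the same jumps. So each term $b^v_j/\mathcal{M}^v_j$ converges to $b^0_j/\mathcal{M}^0_j$ almost surely.

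\emph{Domination of the tail.} The terms must be controlled uniformly in $v\in(-\eps_0,\eps_0)$. Monotonicity gives this. By Lemma \ref{lem_monotone_X}, or directly from \eqref{def_mu}, $\mu^v_s$ is non-increasing in $v$, so $\psi^v_k$ is non-decreasing in $v$. Hence $\mathcal{M}^v_j\ge \mathcal{M}^{-\eps_0}_j$. For the numerator,
$$b^v_j=\lambda_2\int_0^{t_j}e^{\psi^v_j(t_j)-\psi^v_j(u)}\,du\le \lambda_2 t_j e^{c\,t_j},$$
where $c$ is a bound on $\lambda_2-\mu^v$ uniform in $v$; such a $c$ exists because $\mu^v\ge0$. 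This gives
$$\frac{b^v_j}{\mathcal{M}^v_j}\le \frac{\lambda_2 t_je^{ct_j}}{\mathcal{M}^{-\eps_0}_j}.$$
The right-hand side is summable almost surely: since $\mathfrak f_{-\eps_0}>0$, $\mathcal{M}^{-\eps_0}_j$ grows exponentially by \eqref{as_equiv}, while $t_je^{ct_j}=e^{o(j)}$ almost surely by Borel--Cantelli on the i.i.d.\ exponential $t_j$. Dominated convergence for series then gives $S_v:=\sum_j b^v_j/\mathcal{M}^v_j\to S_0$ almost surely.

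\emph{Passing to the expectation.} It remains to take the limit in $\E[S_v^{-1}]$. The function $x\mapsto x^{-1}$ is continuous on $(0,\infty]$, so $S_v^{-1}\to S_0^{-1}$ almost surely. The bound $S_v^{-1}\le (b^v_0)^{-1}\le (\lambda_2 t_0)^{-1}$ is not integrable enough, so instead I use $S_v^{-1}\le 1$. This holds because it is a probability, being the limit of $\P(X_{n+1}>0\mid\mathcal{G})$; equivalently $S_v\ge 1$ follows from $m\le 1+b$ in the linear fractional law. Bounded convergence then yields $p_v\to p_0$.

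The main obstacle is the domination step. It requires a uniform exponential lower bound on $\mathcal{M}^v_j$, which needs the monotone coupling in $v$ (to reduce to $v=-\eps_0$) together with the almost sure growth rate $\mathfrak f_{-\eps_0}/r>0$ from \eqref{as_equiv}. I would check carefully that the $Z^{(1,\cdot)}$ entering $\mu^{-\eps_0}$ is indeed the one covered by \eqref{as_equiv}, namely $Z^{(1,a_1)}$. A second point to verify is the identification $\P(\lim X^v=\infty)=\E[S_v^{-1}]$, which needs $\mathcal{M}^v_n\to\infty$ for every $v$ in the range; this holds since $\mathfrak f_v\ge\mathfrak f_{-\eps_0}>0$.
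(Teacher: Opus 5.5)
Your proof is correct. It shares the paper's setup: Kersting's formula (16) for the quenched survival probability, $p_v=\E[1/S_v]$ with $0\le 1/S_v\le 1$, reduction to $S_v\to S_0$ almost surely, and termwise continuity in $v$. Where you differ is in how you pass to the limit in the series.

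The paper splits the argument into two sides. For right-continuity it uses only monotonicity: $S_v\le S_0$, and truncating the series gives $\liminf_{v\downarrow0}S_v\ge S_0$, so no majorant is needed there. For left-continuity it exploits the explicit form $\mu^v=\mu^0+(\beta_{21}+\beta_{22})|v|$. This lets it sandwich $S_v$ between exponentially tilted versions of the $v=0$ series. It then controls $\Sigma(\beta_1)$ through $S_{-\eps_1}<\infty$ together with $t_j/T_j\to0$.

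You instead build a single $v$-independent summable majorant, $\lambda_2 t_j e^{\lambda_2 t_j}/\mathcal{M}^{-\eps_0}_j$, valid on the whole interval $(-\eps_0,\eps_0)$. It rests on three facts:
\begin{itemize}
\item monotonicity in $v$ for the denominator;
\item $\mu^v\ge0$ for the numerator;
\item the almost sure growth $\ln\mathcal{M}^{-\eps_0}_j/j\to\mathfrak{f}_{-\eps_0}/r>0$ from \eqref{as_equiv}, against $t_j=O(\ln j)$.
\end{itemize}
One application of dominated convergence for series then handles both sides at once, and also yields $S_0<\infty$.

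Your route is shorter and symmetric, and it does not need the explicit affine dependence on $v$ on the negative side. The paper's right-hand argument has the minor advantage of requiring no domination at all. The point you flagged is fine: $\mu^{-\eps_0}$ is built on $Z^{(1,a_1)}$, so \eqref{as_equiv} applies with rate $\mathfrak{f}_{-\eps_0}/r$. Your claim $S_v\ge1$ via $m\le 1+b$ is also valid, by telescoping $b_j/\mathcal{M}_j\ge 1/\mathcal{M}_{j-1}-1/\mathcal{M}_j$ once $\mathcal{M}^v_n\to\infty$.
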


\begin{proof}
Conditionally on $\mathcal{G}$, $(X^{v}_{T_n-},n\in\N)$ is a branching process in the varying environment ${E}^{v}$, which is $\mathcal{G}$-measurable, and whose offspring laws are linear fractional with parameters $(m^{v}_k,b^{v}_k)$ (recall \eqref{shape_linfrac}).
Equation (16) in \cite{Kersting19} therefore reads, in our setting, $$\P\big(X^{v}_{T_n-}>0\,\big|\,\mathcal{G}\big)^{-1}
=\frac{1}{\mathcal{M}^{v}_{n-1}}+\sum_{j=0}^{n-1}\frac{b^{v}_j}{\mathcal{M}^{v}_j}.$$
Letting $n$ go to infinity and using that $\{X^{v}_{T_n-}>0\ \forall n\}=\{\lim X^{v}=\infty\}$ a.s. (Theorem 1 in \cite{Kersting19}), we get 
\begin{equation}\label{agresti}
p_{v}=\E\big[\pi_{v}\big],\qquad
\pi_{v}:=\frac{1}{S_{v}},\qquad S_{v}:=\sum_{j\geq0}\frac{b^{v}_j}{\mathcal{M}^{v}_j}\in[1,\infty],
\end{equation}
with the convention $1/\infty=0$. Since $0\leq\pi_{v}\leq1$, it is enough to prove that $S_{v}\to S_0$ almost surely as $v$ goes to $0$.
First, as for all $s\geq 0$, $v\mapsto \mu_s^v$ is non-increasing (see the proof of Lemma \ref{lem_monotone_X}$(i)$), we obtain that $v\mapsto\pi_{v}$ is non-decreasing and $v\mapsto S_{v}$ is non-increasing, almost surely. Second, for every fixed $j$ the map $v\mapsto (m^{v}_j,b^{v}_j)$ is almost surely continuous: from \eqref{def_mu}, this is clear for $v\leq0$, and for $v\geq0$, combining
$$d\big(Z^{(1,a_1)}_t-Z^{(1,a_1-\beta_{12}v)}_t\big)\leq a_1\big(Z^{(1,a_1)}_t-Z^{(1,a_1-\beta_{12}v)}_t\big)dt+\beta_{12}v Z^{(1,a_1-\beta_{12}v)}_tdt$$
and $Z^{(1,a_1-\beta_{12}v)}_t\leq(a_1-\beta_{12}v)/\beta_{11}$, yields the almost sure bound
\begin{equation*}\label{diff_mu}
0\leq Z^{(1,a_1)}_t-Z^{(1,a_1-\beta_{12}v)}_t\leq\frac{\beta_{12}v}{\beta_{11}}\big(e^{a_1t}-1\big),
\qquad t\geq0,
\end{equation*}
hence the convergence of $\mu^{v}_s$ to $\mu^0_s$ for every $s\geq0$, and, by dominated convergence on the compact interval $[0,t_j]$, that of $\psi^{v}_j(t_j)$ and $V^{v}_j(t_j)$.\\

$\bullet$ {Right-continuity ($v\geq 0$)}. By monotonicity $S_{v}\leq S_0$, while for every $N$,
$$\liminf_{v\downarrow0}S_{v}\geq\lim_{v\downarrow0}\sum_{j\leq N}\frac{b^{v}_j}{\mathcal{M}^{v}_j}
=\sum_{j\leq N}\frac{b^{0}_j}{\mathcal{M}^{0}_j},$$
so that $\liminf_{v\downarrow0}S_{v}\geq S_0$ by letting $N$ go to infinity.\\

$\bullet$ {Left-continuity ($v\leq 0$).} Here the dependence in $v$ is explicit: from \eqref{def_mu}, $\mu^{v}_s=\mu^0_s+c|v|$ with $c:=\beta_{21}+\beta_{22}$, so that $\psi^{v}_j(t)=\psi^0_j(t)-c|v|t$ and,
$$\mathcal{M}^{v}_j=\mathcal{M}^{0}_je^{-c|v|T_{j+1}},\qquad e^{-c|v|t_j}b^{0}_j\leq b^{v}_j\leq b^{0}_j,$$
the last two inequalities coming from $V^{0}_j(t_j)\leq V^{v}_j(t_j)\leq e^{c|v|t_j}V^{0}_j(t_j)$. Hence, for $v\leq0$,
\begin{equation}\label{encadrement_Sv}
\sum_{j\geq0}e^{c|v|T_j}\frac{b^{0}_j}{\mathcal{M}^{0}_j}\ \leq\ S_{v}\ \leq\ \Sigma(c|v|),
\qquad\text{where }\ \Sigma(\beta):=\sum_{j\geq0}e^{\beta T_{j+1}}\frac{b^{0}_j}{\mathcal{M}^{0}_j}.
\end{equation}
Fix $\eps_1\in(0,\eps_0)$. As $\mathfrak{f}_{-\eps_1}>0$, following the proof of Proposition \ref{prop_Kersting_aleatoire}, we get that the environment ${E}^{-\eps_1}$ almost surely satisfies condition (vii) of Theorem 1 in \cite{Kersting19}, so that $\pi_{-\eps_1}>0$, that is $S_{-\eps_1}<\infty$, almost surely. By \eqref{encadrement_Sv}, $\sum_{j}e^{c\eps_1T_j}b^{0}_j/\mathcal{M}^{0}_j<\infty$ almost surely. Combining that $T_j/j\xrightarrow[]{\text{a.s.}}1/r$ by the law of large numbers and that $t_j\overset{\text{a.s}}{=}O(\ln j)$ (by the Borel-Cantelli lemma as $\sum_j\P(t_j>2r^{-1}\ln j)=\sum_jj^{-2}<\infty$), we obtain
\begin{equation}\label{tj_Tj}
\lim_{j\to\infty}\frac{t_j}{T_j}=0\qquad\text{almost surely.}
\end{equation}
Hence, for $\beta_1:=c\eps_1/2$, we have $\beta_1T_{j+1}=\beta_1T_j+\beta_1t_j\leq c\eps_1T_j$ for $j$ large enough by \eqref{tj_Tj}, and $\Sigma(\beta_1)<\infty$ almost surely. Dominated convergence in $\Sigma(c|v|)$ then gives $\Sigma(c|v|)\to\Sigma(0)=S_0$ as $v$ increases to $0$, and \eqref{encadrement_Sv} yields $S_{v}\to S_0$, which ends the proof.
\end{proof}

We are now ready to prove the main result of this section.

\begin{proof}[Proof of Proposition \ref{possible_invasion}]
We first assume that $\mathfrak{f}_{21}<0$. Fix $\tilde\alpha>1$, $\mathfrak{a}>0$, and let $\eps\in(0,\eps_0\wedge\mathfrak{a})$, so that $\mathfrak{f}_{\eps}<0$. Applying Proposition \ref{prop_Kersting_aleatoire} to $X^{\eps}$ (see \eqref{as_equiv}) ensures that $(X^{\eps}_{T_n-},n\in\N)$ becomes extinct almost surely. Combining this with Lemma \ref{lem_monotone_X}$(ii)$, we get
\begin{equation*}\label{order_at_cata} T_0^{X^\eps}<\infty\quad\text{and}\quad \mathscr{M}^{\eps}:=\sup_{t\leq T_0^{X^\eps}}X^{\eps}_t<\infty\qquad\text{almost surely,}
\end{equation*}
As neither $T_0^{X^\eps}$ nor $\mathscr{M}^{\eps}$ depend on $K$, we deduce
\begin{equation}\label{event_subcritical}
\lim_{K\to\infty}\P\big(T_0^{X^\eps}<\ln^{\tilde\alpha}K \ \text{ and }\
\mathscr{M}^{\eps}<\lceil\eps K\rceil-1\big)=1 .
\end{equation}
Let us work on the intersection of the events in \eqref{prop_Xulbound} (with $x_0=1$) and in \eqref{event_subcritical}, denoted by $\mathfrak{D}_\eps^K$, whose probability goes to one when $K$ goes to infinity. Assume first that $\widetilde{T}^K_{\eps}\leq T_0^{X^\eps}$. Then $\widetilde{T}^K_{\eps}<\ln^{\tilde\alpha}K$, and as $\Gamma^{(2,K)}$ increases by jumps of size one,
$$\lceil\eps K\rceil-1=\Gamma^{(2,K)}_{\widetilde{T}^K_{\eps}-}\leq X^{\eps}_{\widetilde{T}^K_{\eps}-}\leq\mathscr{M}^{\eps},$$ which contradicts \eqref{event_subcritical}. Hence $\widetilde{T}^K_{\eps}>T_0^{X^\eps}$, and \eqref{prop_Xulbound} taken at time $t=T_0^{X^\eps}$ gives $\Gamma^{(2,K)}_{T_0^{X^\eps}}\leq X^{\eps}_{T_0^{X^\eps}}=0$. As $0$ is an absorbing state for $\Gamma^{(2,K)}$, we conclude that 
$$\widetilde{T}^K_0\leq T_0^{X^\eps}\quad\text{and}\quad\widetilde{T}^K_{\eps}=\infty .$$
Finally, $\eps\leq\mathfrak{a}$ implies $\widetilde{T}^K_{\mathfrak{a}}\geq \widetilde{T}^K_{\eps}$ for $K$ large enough, so that on $\mathfrak{D}_\eps^K$, we have $\ind_{\{\widetilde{T}^K_0>\widetilde{T}^K_{\mathfrak{a}}\}}=0$ and $\widetilde{T}^K_0\wedge\widetilde{T}^K_{\mathfrak{a}}=\widetilde{T}^K_0\leq T_0^{X^\eps}$. Since $T_0^{X^\eps}$ is a finite random variable which does not depend on $K$, this proves \eqref{prop_inv_debut} and ends the proof in the case $\mathfrak{f}_{21}<0$.

 We suppose now that $\mathfrak{f}_{21}>0$. We first show \eqref{prop_inv_intermediaire} and then we give an argument to strengthen this limit in $\eps \to 0$ to an equality for $\eps>0$ small enough. By \eqref{def_eps0}, $\mathfrak{f}_{-\eps_0}>0$. Proposition \ref{prop_Kersting_aleatoire} gives a positive probability of invasion for $X^{v}$:
    $$p_{v}=\P(\lim X^{v}=\infty)>0\quad\text{for }v\in(-\eps_0,\eps_0).$$
By Lemma \ref{lem_continuity_p}, the function $v\mapsto p_{v}$ is continuous at $0$: the parameter of the Bernoulli random variable obtained in the limit is $p=p_0$. Evaluating \eqref{agresti} at $v=0$ with $\mu^{0}_s=\mu_2+\beta_{21}Z^{(1,a_1)}_s$ give $\mathcal{M}^{0}_j=\mathcal{M}_j$ and $b^{0}_j=b_j$ thus proving \eqref{formule_p}. Finally $p<1$, since the type-$2$ individual present at time $0$ dies before reproducing with positive probability.

Applying Proposition \ref{prop_Kersting_aleatoire} to $X^{\pm\eps}$ and using the sandwiching \eqref{prop_Xulbound}, the continuity at $0$ of $v\mapsto p_{v}$ and of $v\mapsto\mathfrak{f}_{v}$ now yield \eqref{prop_inv_intermediaire}, provided we check that the time needed by $\Gamma^{(2,K)}$ to go from
$\lceil\eps K/2\rceil$ to a value larger than $\lceil\eps K\rceil$ just after a catastrophe (at time $T_k$ for some $k\in\N$),
%as well as the time elapsed between the last catastrophe and this event, are
is negligible with respect to $\ln K$. This is the purpose of the next paragraph. Applying \eqref{prop_Xulbound} with $x_0=\lceil \eps K/2\rceil$ together with the strong Markov property at time
$\widetilde{T}^K_{\eps/2}$, we know that
$$ \lim_{K \to \infty}\P\big(\forall t<\widetilde{T}_{\eps}^K\wedge\ln^{\tilde\alpha}K,X^{-\eps}_t\leq \Gamma^{(2,K)}_t\big|X^{-\eps}_0 = \Gamma^{(2,K)}_0 = \lceil \eps K/2\rceil \big)=1.
$$
Besides, by definition of $X^{-\eps}$ in \eqref{def_Xeps} and Theorem 2 in \cite{Kersting19}, we know that for $k \in \N$
$$\Big( X^{-\eps}_{T_{k}}e^{-\int_0^{T_k}(a_2-(\beta_{22}+\beta_{21})\eps - \beta_{21}Z_s^{(1,a_1)})ds - \sum_{i=1}^k\ln(\Theta_i^{(2)})},k \in \N \Big) $$
is a uniformly integrable martingale and thus converges almost surely. Moreover, from Proposition \ref{ergofaible}(1), 
$$ \int_0^{T_k}(a_2-(\beta_{22}+\beta_{21})\eps - \beta_{21}Z_s^{(1,a_1)})ds + \sum_{i=1}^k\ln(\Theta_i^{(2)}) \sim \mathfrak{f}_{-\eps}T_k$$
for large $k$. This ensures that if $X^{-\eps}_0 = \lceil \eps K/2\rceil$, with high probability for large $K$
$$  X^{-\eps}_{T_{\sqrt{\ln K}}} \geq X_0^{-\eps} e^{\mathfrak{f}_{-\eps}T_{\sqrt{\ln K}}/2} \geq \eps K. $$
Hence, the time interval between the first time when $\Gamma^{(2,K)}$ reaches $\lceil\eps K/2\rceil$ and the first $T_k$ such that $\Gamma^{(2,K)}_{T_k}\geq \eps K$ is negligible with respect to $\ln K$, since it is dominated by $\sqrt{\ln K}$. 
This ends the proof of \eqref{prop_inv_intermediaire}.

It only remains to show that \eqref{prop_inv_intermediaire} implies \eqref{prop_inv_final}.
Proposition \ref{ergofaible} applied to the process $Z$ with initial condition $Z^{(1)}_0>0$ and $Z_0^{(2)}=\eps$ entails that almost surely,
    $$\lim_{t \to \infty}\frac{1}{t}\int_0^tZ^{(2)}_sds=\overline{z}_2.$$
In particular $Z^{(2)}$ reaches any $z<\overline{z}_2$ in finite time almost surely.
Combining this with Lemma \ref{lem_Kurtz} applied to the processes $Z^{(K)}$ and $Z$ with $Z^{(1,K)}_0=Z_0^{(1)}$ and $Z_0^{(2,K)}=\eps$ ends the proof.

We can now end the proof considering the case where $\Gamma$ is defined with $\mu_2=0$: take a sequence $(\mu_n, n\geq0)$ such that $\lim_{n\to\infty}\mu_n=0$ and the associated sequence of processes $\Gamma_n$, we see from the continuity of the formula \eqref{formule_p} that for fixed $K$ $\lim_n\P((\widetilde{T}^K_0)_n<(\widetilde{T}^K_\eps)_n)=\P(\widetilde{T}^K_0<\widetilde{T}^K_\eps)$, and using finally \eqref{prop_inv_final} gives the result by monotone convergence.
\end{proof}

\subsubsection{Proofs of Corollary \ref{coro_invasion} and Theorem \ref{theorem2}}

\begin{proof}[Proof of Corollary \ref{coro_invasion}]
    Since $0$ is absorbing for $\Gamma^{(2,K)}$, which moreover increases by jumps of size one, for every $y\in(0,\overline{z}_2)$ we have
    $$\big\{\widetilde{T}^K_y<\infty\big\}=\big\{\widetilde{T}^K_0>\widetilde{T}^K_y\big\}
    \qquad\text{and}\qquad
    \big\{\widetilde{T}^K_y\leq t_K\big\}=\Big\{\sup_{s\leq t_K}Z^{(2,K)}_s\geq y\Big\} .$$
    If $\mathfrak{f}_{21}<0$, according to \eqref{prop_inv_debut}, $\P\big(\widetilde{T}^K_y<\infty\big)\xrightarrow[K\to\infty]{}0$ for every $y>0$, hence so do $\P(E^K_1)$, $\P(E^K_2)$ and $\P(E^K_1\triangle E^K_2)$; and $p(C)=0$ in that case.

    Assume now $\mathfrak{f}_{21}>0$ and fix $y\in(0,\overline{z}_2)$. By     \eqref{prop_inv_final}, $\left(\widetilde{T}^K_0\wedge\widetilde{T}^K_y\right)(\ln K)^{-1}$ converges in total variation to $B\mathfrak{f}_{21}^{-1}$ with $B\sim \mathcal{B}(p(C))$, jointly with $\ind_{\{\widetilde{T}^K_0>\widetilde{T}^K_y\}}$ converging to $B$; since $\liminf_Kt_K/\ln K>\mathfrak{f}_{21}^{-1}$, this yields both
    $$\lim_{K\to\infty}\P\big(\widetilde{T}^K_y<\infty\big)=p(C)
    \qquad\text{and}\qquad
    \lim_{K\to\infty}\P\big(\widetilde{T}^K_y\leq t_K\big)=p(C) .$$
    Writing $y_-:=z\wedge z'$ and $y_+:=z\vee z'$, and using that $y\mapsto\widetilde{T}^K_y$ is non-decreasing, we get
    $$\big\{\widetilde{T}^K_{y_+}\leq t_K\big\}\subseteq E^K_1\cap E^K_2
    \subseteq E^K_1\cup E^K_2\subseteq\big\{\widetilde{T}^K_{y_-}<\infty\big\} .$$
    The probabilities of the two extreme events both converge to $p(C)$, hence $\P\big(E^K_1\triangle E^K_2\big)\to0$ and $\P(E^K_1)\to p(C)$. Finally $p(C)>0$ when $\mathfrak{f}_{21}>0$, by Proposition \ref{possible_invasion}.
\end{proof}

\begin{proof}[Proof of Theorem \ref{theorem2}]
    If $\mathfrak{f}_{21}<0$, for fixed $t>0$, Lemma \ref{lem_Kurtz} gives the first part of \eqref{th2_sub}; since $Z_0=(C,0)$, the process $Z$ is monomorphic: \eqref{eq:cesaro} concludes this case.

    Suppose now that $\mathfrak{f}_{21}>0$ and let $z\in(0,\overline{z}_2)$. We will apply Lemma \ref{lem_Kurtz} to $(Z^{(K)}_{\widetilde{T}^K_z+t},t\geq0)$, and then conclude with Proposition \ref{ergofaible}. To use Lemma \ref{lem_Kurtz}, according to Remark \ref{rem_Kurtz_random}, we first prove that $\big(\ind_{\{\widetilde{T}^{K}_z<\infty\}}Z^{(1,K)}_{\widetilde{T}^{K}_z},K\geq1\big)$ is tight in $\R^*_+$. Let $\eps>0$ be such that $\overline{a}_1-\beta_{12}\eps>0$. From a coupling argument combined with Proposition \ref{prop_cvglongue}, $Z^{(1,K)}_{\widetilde{T}^{K}_\eps}\geq\widecheck{Z}^{(1)}_{\widetilde{T}^{K}_\eps}$ with $\widecheck{Z}^{(1)}$ the strong solution to \eqref{eq:rival_lower} with $j=1$, $\widecheck{Z}^{(1)}_0=z^{(1)}_0$ and $\eta=\eps$. Moreover, $Z^{(1,K)}$ is dominated by the rescaled birth-death process without the competition exerted by type-$2$ individuals, whose supremum over $\R_+$ is tight, thus the family $\big(\ind_{\{\widetilde{T}^{K}_\eps<\infty\}}Z^{(1,K)}_{\widetilde{T}^{K}_\eps},K\geq1\big)$ is tight in $\R_+^*$.

    We now transfer this property from time $\widetilde{T}^{K}_\eps$ to time $\widetilde{T}^{K}_z$. By the strong Markov property at $\widetilde{T}^{K}_\eps$ and Remark \ref{rem_Kurtz_random}, for every $S<\infty$,
    $$\sup_{s\leq S}\big\Vert Z^{(K)}_{\widetilde{T}^{K}_\eps+s}-\mathcal{Z}^{K}_s\big\Vert
    \underset{K\to\infty}{\longrightarrow}0\quad\text{in probability,}$$
    where $\mathcal{Z}^{K}$ denotes the solution to \eqref{eqdep} started at $Z^{(K)}_{\widetilde{T}^{K}_\eps}\in(\R_+^*)^2$. By Proposition \ref{ergofaible} applied to $\mathcal{Z}^K$, the level $z$ is reached by $\mathcal{Z}^{K,(2)}$ in an almost surely finite time whose law is tight in $K$, since the initial conditions are tight in a compact subset of $(\R_+^*)^2$. Consequently the family $\big(\ind_{\{\widetilde{T}^{K}_z<\infty\}}(\widetilde{T}^{K}_z-\widetilde{T}^{K}_\eps), K\geq1\big)$ is tight, and so is $\big(\ind_{\{\widetilde{T}^{K}_z<\infty\}}Z^{(1,K)}_{\widetilde{T}^{K}_z},K\geq1\big)$ in $\R^*_+$.

    Applying the strong Markov property at $\widetilde{T}^{K}_z$ together with Remark \ref{rem_Kurtz_random} for every fixed $t>0$, we get
    $$\frac{1}{t}\int_0^tZ^{(K)}_{\widetilde{T}^{K}_z+s}ds -\frac{1}{t}\int_0^t\mathcal{Z}'^{K}_sds\underset{K\to\infty}{\longrightarrow}0
    \qquad\text{in probability,}$$
    where $\mathcal{Z}'^{K}$ is the solution to \eqref{eqdep} driven by the same Poisson measures and started at $Z^{(K)}_{\widetilde{T}^{K}_z}=\big(Z^{(1,K)}_{\widetilde{T}^{K}_z},z\big)$. Fix $\eta,\gamma >0$. By the tightness obtained above, there are $0<\varrho<M<\infty$ such that $\limsup_K\P^{K}_z\big(Z^{(1,K)}_{\widetilde{T}^{K}_z}\notin[\varrho,M]\big)<\gamma$. Write $\zeta\preceq\zeta'$ when $\zeta^{(1)}\leq\zeta'^{(1)}$ and $\zeta^{(2)}\geq\zeta'^{(2)}$, and let $Z^{\zeta}$ be the solution to \eqref{eqdep} started at $\zeta$ and driven by the
    same measures. The drift of \eqref{eqdep} is quasi-monotone for $\preceq$ and the jumps are multiplicative, hence order preserving, so a pathwise comparison as in Lemma \ref{lem_monotone_X}$(i)$ gives $Z^{(\varrho,z)}_s\preceq\mathcal{Z}'^{K}_s\preceq Z^{(M,z)}_s$ for every $s\geq0$, on the event $\{Z^{(1,K)}_{\widetilde{T}^{K}_z}\in[\varrho,M]\}$. Both bounds have the same Ces\`aro limit $\mathfrak{z}$ almost surely by Proposition \ref{ergofaible}, whose conclusion does not depend on the initial condition in $(\R_+^*)^2$, so that
    $$\limsup_{K\to\infty}\P^{K}_z\Big(\Big\Vert\frac1t\int_0^tZ^{(K)}_{\widetilde{T}^{K}_z+s}ds-\mathfrak{z}\Big\Vert_2>\eta\Big)\leq\gamma
    +\sum_{\zeta\in\{(\varrho,z),(M,z)\}}\P\Big(\Big\Vert\frac1t\int_0^tZ^{\zeta}_sds -\mathfrak{z}\Big\Vert_2>\eta\Big).$$
    Letting $t$ go to infinity and then $\gamma$ to $0$ yields \eqref{th2_super}.
\end{proof}
\section*{Acknowledgements}
The authors thank S. Billiard and C. Leuridan for fruitful discussions. This work was partially funded by the Chair "Modélisation Mathématique et Biodiversité" of VEOLIA-Ecole Polytechnique-MNHN-F.X. and by the French national research agency (ANR) via project ANR ARBOREAL (ANR-23-CE45-0007).

\begin{appendix}

\section{Some results for the deterministic competitive Lotka-Volterra}
\label{secLVdet}

For $a_1,a_2,\beta_{11},\beta_{12},\beta_{21},\beta_{22}>0$ and any initial condition $(\zuz,\zdz)\in\R_+^2$, System \eqref{def_LV_det} admits a unique solution $(\mathbf z_t,t\in\R)$, which stays in $\R_+^2$ (see Section 3 of Chapter 12 of \cite{hirsch1974differential}); we write $\varphi_t(\mathbf z_0):=\mathbf z_t$ for the associated flow.
Recall the definition of the invasion fitnesses in \eqref{fitinv} (the deterministic equivalents of the $\mathfrak{f}_{ij}$'s introduced in \eqref{def_new_fitnesses} for the random system). The following result states that the signs of $f_{12}$ and $f_{21}$ determine the different asymptotic behaviors for the solutions to \eqref{def_LV_det}. 

We list below the different situations that may occur:
\begin{align}
& f_{12}<0 \text{ and } f_{21}<0, \tag{excl}\label{excldet}\\
& f_{ij}\geq 0 \text{ and } f_{ji}\leq 0, \tag{excl$i$}\label{exclidet}\\
%& f_{12}f_{21}\leq 0 \text{ and } f_{12}<f_{21}, \tag{excl2}\label{excl2det}\\
& f_{12}>0 \text{ and } f_{21}>0. \tag{coex}\label{coexdet}
\end{align}

\begin{prop}
    \label{compLVdet}
Let $(\mathbf z_t,t\in\R)=(\zu,\zd,t\in\R)$ be the solution of \eqref{def_LV_det} with $\mathbf z_0\in\mathscr{A}^*$. Assume for the sake of simplicity that $f_{12}$ and $f_{21}$ are nonzero, and set
\begin{equation} \label{xyeq}
x_{eq}=\frac{a_1\beta_{22}-a_2\beta_{12}}{\beta_{11}\beta_{22}-\beta_{12}\beta_{21}},
\qquad
y_{eq}=\frac{a_2\beta_{11}-a_1\beta_{21}}{\beta_{11}\beta_{22}-\beta_{12}\beta_{21}}.
\end{equation}
If a non-trivial equilibrium is a fixed point of $\varphi$ in $(\R_+^*)^2$, exactly one of the following holds.
\begin{enumerate}[leftmargin=*,itemsep=1pt,topsep=2pt]
\item \emph{No non-trivial equilibrium}: if $f_{12}f_{21}<0$, the system $M_1(x,y)=M_2(x,y)=0$ has no solution with $x,y>0$: if $f_{12}>f_{21}$, $\mathbf z_t$ converges to $(a_1/\beta_{11},0)$, else, it converges to $(0,a_2/\beta_{22})$.
\item \emph{One stable equilibrium}: if $f_{12},f_{21}>0$ (\ref{coexdet}), then $x_{eq},y_{eq}>0$ and $\mathbf z_t\to(x_{eq},y_{eq})$.
\item \emph{One unstable equilibrium}: if $f_{12},f_{21}<0$ (\ref{excldet}), then $x_{eq},y_{eq}>0$ and, depending on $\mathbf z_0$, $\mathbf z_t$ converges to $(a_1/\beta_{11},0)$ or to $(0,a_2/\beta_{22})$ (both stable), or to $(x_{eq},y_{eq})$ (unstable).
\end{enumerate}
\end{prop}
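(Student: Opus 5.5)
The plan is the classical phase-plane analysis of planar competitive Lotka--Volterra systems, based on the two nullclines
\[
\ell_1=\{a_1-\beta_{11}x-\beta_{12}y=0\},\qquad \ell_2=\{a_2-\beta_{21}x-\beta_{22}y=0\}.
\]
\begin{itemize}
\item[$\bullet$] \emph{Orientation of the nullclines.} The line $\ell_1$ meets the axes at $(a_1/\beta_{11},0)$ and $(0,a_1/\beta_{12})$, and $\ell_2$ meets them at $(a_2/\beta_{21},0)$ and $(0,a_2/\beta_{22})$. Positivity of $f_{12}$ means $a_1/\beta_{12}>a_2/\beta_{22}$, that is, $\ell_1$ lies above $\ell_2$ on the $y$-axis. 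Positivity of $f_{21}$ means $a_2/\beta_{21}>a_1/\beta_{11}$, that is, $\ell_2$ lies to the right of $\ell_1$ on the $x$-axis.
\item[$\bullet$] \emph{Interior equilibrium.} Since both nullclines are segments in the quadrant, they cross in $(\R_+^*)^2$ exactly when these two comparisons have the same sign, i.e.\ when $f_{12}f_{21}>0$. In that case the crossing point is $(x_{eq},y_{eq})$ of \eqref{xyeq}, by Cramer's rule. When $f_{12}f_{21}<0$ there is no positive solution. This gives the existence and non-existence statements in the three items.
\end{itemize}

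For the convergence statements I would use that the system is competitive: the off-diagonal Jacobian entries $-\beta_{12}x$ and $-\beta_{21}y$ are nonpositive. Reversing the sign of $y$ turns the flow into a cooperative planar flow. Planar monotone systems have the property that every bounded trajectory is eventually monotone in each coordinate, hence converges to an equilibrium (Hirsch's theorem; alternatively one can argue elementarily with the regions cut out by the nullclines). Trajectories are bounded because $\mathscr{A}$ is positively invariant and bounded. The candidate limits are therefore the equilibria $(0,0)$, $(a_1/\beta_{11},0)$, $(0,a_2/\beta_{22})$ and, possibly, $(x_{eq},y_{eq})$. The origin is excluded when $\mathbf z_0\in(\R_+^*)^2$, since near the origin both coordinates grow, as $a_i>0$. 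I would then settle the individual cases by linearization.
\begin{itemize}
\item[$\bullet$] At $(a_1/\beta_{11},0)$ the eigenvalues are $-a_1$ and $f_{21}$. At $(0,a_2/\beta_{22})$ they are $-a_2$ and $f_{12}$.
\item[$\bullet$] In case (1) with $f_{12}>0>f_{21}$, the point $(0,a_2/\beta_{22})$ is a saddle whose stable manifold is the $y$-axis. A trajectory with $x_0>0$ therefore cannot converge to it, so it converges to $(a_1/\beta_{11},0)$. The other sign pattern is symmetric.
\item[$\bullet$] In case (2) both boundary equilibria are saddles whose stable manifolds lie in the axes, so positive trajectories converge to $(x_{eq},y_{eq})$.
\item[$\bullet$] In case (3) both boundary equilibria are stable. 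The interior equilibrium has Jacobian determinant $x_{eq}y_{eq}(\beta_{11}\beta_{22}-\beta_{12}\beta_{21})$, which is negative because $f_{12},f_{21}<0$ force $\beta_{12}\beta_{21}>\beta_{11}\beta_{22}$. So $(x_{eq},y_{eq})$ is a saddle. Its one-dimensional stable manifold separates the two basins, and trajectories starting on that manifold converge to the saddle itself.
\end{itemize}

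The main obstacle is the rigorous ``every trajectory converges to an equilibrium'' step, as opposed to the easy local analysis. I would handle it without citing Hirsch by dividing the open quadrant into the regions bounded by $\ell_1$ and $\ell_2$. In each region the signs of $M_1$ and $M_2$ are constant. The regions where $M_1$ and $M_2$ have opposite signs are positively invariant: on their boundary the vector field points inward, as one checks from the sign of the other component on each nullcline. A trajectory therefore either stays forever in one region, where it is monotone and bounded and hence converges to an equilibrium, or it enters one of the invariant regions and is monotone from then on. Periodic orbits are excluded as well, since any of them would have to enclose an equilibrium while crossing regions in a monotone fashion. This monotonicity argument is the crux; the remaining points are routine eigenvalue computations.
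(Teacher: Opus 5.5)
Your proposal is correct and is essentially the classical argument the paper relies on: the paper gives no proof of its own and defers to Murray's phase-plane analysis (nullclines, equilibria, linearization). Your invariant-wedge (or planar-monotonicity) step also makes rigorous the global convergence to an equilibrium that such phase-portrait treatments leave implicit; the one slip is that $\mathscr{A}$ itself is unbounded, since it contains both coordinate axes where $M_1$ or $M_2$ vanishes identically, so take boundedness from $\mathscr{A}^*$ or directly from $z^{(i)}_t\le \max\big(z^{(i)}_0,a_i/\beta_{ii}\big)$.
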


We refer the reader to \cite{Murray1993} for more details.

\section{A technical result on Lévy processes} \label{annexe_Levy}

\begin{lemma}
\label{momentinf}
    Let $\mathscr K$ be a spectrally negative Lévy process satisfying $\lim_{t \to \infty}\mathscr K_t=+\infty$ and $\underline{\mathscr{K}}_\infty$ its infimum over $\R_+$. Then $|\underline{\mathscr{K}}_\infty| < \infty$ almost surely.
    If additionally $ \E[ \mathscr K^2_1 ]<\infty$, then $\E[|\underline{\mathscr{K}}_\infty|]<\infty.$
\end{lemma}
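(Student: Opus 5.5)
The statement to prove is Lemma~\ref{momentinf}: a spectrally negative Lévy process $\mathscr K$ drifting to $+\infty$ has an almost surely finite overall infimum, and this infimum is integrable when $\E[\mathscr K_1^2]<\infty$. The plan is to read off both claims from the classical fluctuation identity for spectrally negative processes. Let $\psi(\lambda)=\ln\E[e^{\lambda\mathscr K_1}]$ be the Laplace exponent, which is finite for $\lambda\geq0$ and convex with $\psi(0)=0$.

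\textbf{Step 1 (finiteness).} Since $\mathscr K_t\to+\infty$ almost surely, there is an almost surely finite random time $\tau$ with $\mathscr K_t\geq0$ for all $t\geq\tau$. Paths are càdlàg, so the infimum over $[0,\tau]$ is finite. Hence $\underline{\mathscr K}_\infty=\min\big(\inf_{[0,\tau]}\mathscr K,\,0\big)>-\infty$ almost surely. No spectral assumption is needed here.

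\textbf{Step 2 (law of the infimum).} Drifting to $+\infty$ forces $\psi'(0^+)=\E[\mathscr K_1]\in(0,\infty]$. For a spectrally negative process with $\psi'(0^+)>0$, the Wiener--Hopf factorisation (Kyprianou, Theorem 8.1 and Corollary 8.9 of \cite{kyprianou2006introductory}) gives, for $\beta>0$,
\[
\E\big[e^{\beta\underline{\mathscr K}_\infty}\big]=\psi'(0^+)\,\frac{\beta}{\psi(\beta)} .
\]
Equivalently, $\P(-\underline{\mathscr K}_\infty\leq x)=\psi'(0^+)W(x)$, where $W$ is the scale function.

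\textbf{Step 3 (integrability).} Assume $\E[\mathscr K_1^2]<\infty$. Then $\psi$ is $\mathcal C^2$ at $0^+$, with $\psi'(0^+)=\E[\mathscr K_1]=:m\in(0,\infty)$ and $\psi''(0^+)=\mathrm{Var}(\mathscr K_1)=:\sigma^2$. Expanding,
\[
\frac{\beta}{\psi(\beta)}=\frac{1}{m+\sigma^2\beta/2+o(\beta)},
\]
so
\[
\phi(\beta):=\E\big[e^{\beta\underline{\mathscr K}_\infty}\big]=1-\frac{\sigma^2}{2m}\beta+o(\beta).
\]
By monotone convergence applied to $(1-e^{\beta\underline{\mathscr K}_\infty})/\beta\uparrow|\underline{\mathscr K}_\infty|$, this yields
\[
\E\big[|\underline{\mathscr K}_\infty|\big]=\lim_{\beta\downarrow0}\frac{1-\phi(\beta)}{\beta}=\frac{\sigma^2}{2m}<\infty .
\]

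\textbf{Alternative route.} If one prefers to avoid the Wiener--Hopf identity, write $\mathscr K=X+\text{(compound Poisson of negative jumps)}$, which is the only case used in the paper for $K^{(i)}$. Then bound $|\underline{\mathscr K}_\infty|$ by a ladder-height or renewal argument. Sample at integer times and set $S_n=\mathscr K_n$, a random walk with positive drift and finite variance. By Kiefer--Wolfowitz, $\E|\inf_n S_n|<\infty$ if and only if $\E[(S_1^-)^2]<\infty$. Finally add $\sup_n\big(S_n-\inf_{[n,n+1]}\mathscr K\big)$-type corrections, controlled by $\E\big[\sup_{t\leq1}|\mathscr K_t|^2\big]<\infty$ via Doob's inequality.

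The main obstacle is justifying the differentiation at $0^+$ in Step 3, which requires the two-sided moment condition to get $\psi''(0^+)<\infty$. With $\E[\mathscr K_1^2]<\infty$ this is standard: differentiate under the expectation by dominated convergence on $[0,\beta_0]$, using that $e^{\lambda\mathscr K_1}\leq e^{\lambda\mathscr K_1^+}$ and that the positive part has all exponential moments by spectral negativity.
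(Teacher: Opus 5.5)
Your proof is correct and follows the paper's route: you use the same Wiener--Hopf identity $\E[e^{\beta\underline{\mathscr K}_\infty}]=\psi'(0^+)\beta/\psi(\beta)$, followed by a second-order expansion of $\psi$ at $0^+$, which gives $\E[|\underline{\mathscr K}_\infty|]=\mathrm{Var}(\mathscr K_1)/(2\E[\mathscr K_1])$. Your explicit argument for almost sure finiteness and your monotone-convergence treatment of the quotient $(1-\phi(\beta))/\beta$ are slightly more careful than the paper's direct differentiation; one small correction is that $\psi'(0^+)$ is automatically finite for a spectrally negative process, so your $(0,\infty]$ should read $(0,\infty)$.
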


\begin{proof}
    Let $\psi(\lambda)=\ln\E[e^{\lambda \mathscr{K}_1}]$ denote the Laplace exponent of $\mathscr K$. As $\mathscr{K}$ drifts to infinity and has a finite second moment, we have $\psi'(0+)=\E[\mathscr{K}_1]>0$ and $\psi''(0+)=\text{Var}[\mathscr{K}_1]<\infty$. Now we recall the equality between (8.15) and (8.16) in \cite{kyprianou2006introductory}, for $\lambda \geq 0$:
    $$ \E[e^{\lambda \underline{\mathscr{K}}_\infty}] = \frac{\psi'(0+)\lambda}{\psi(\lambda)}. $$
    Differentiating this identity and using second order Taylor expansion give
    \begin{align*}
        \E[\underline{\mathscr{K}}_\infty]&= \lim_{\lambda \to 0} \frac{\partial \E[e^{\lambda \underline{\mathscr{K}}_\infty}]}{\partial \lambda}
        = \psi'(0+)\lim_{\lambda \to 0}\frac{\psi(\lambda)-\lambda \psi'(\lambda)}{\psi^2(\lambda)}\\
        &= - \psi'(0+) \frac{\psi''(0+)}{2(\psi'(0+))^2} = -\frac{\text{Var}[\mathscr{K}_1]}{2\E[\mathscr{K}_1]}.
    \end{align*}
    This ends the proof.
\end{proof}

\end{appendix}

\bibliographystyle{abbrv}
\bibliography{bib}

\end{document}